\theoremstyle{definition}\newtheorem{theorem}{Theorem}[section]\newtheorem{lemma}[theorem]{Lemma}\newtheorem{remark}[theorem]{Remark}\newtheorem{proposition}[theorem]{Proposition}\newtheorem{corollary}[theorem]{Corollary}\newtheorem{definition}[theorem]{Definition}\newtheorem{Ex}[theorem]{Example}
\DeclareMathOperator{\Ima}{\mathrm{im}}\DeclareMathOperator{\RS}{\mathrm{RS}}\DeclareMathOperator{\wid}{A}\DeclareMathOperator{\nint}{\alpha}\DeclareMathOperator{\APAut}{\mathrm{Aut}}\DeclareMathOperator{\APrk}{\mathrm{rk}}\DeclareMathOperator{\Sa}{\mathbf{S}}\DeclareMathOperator{\Irr}{\mathrm{Irr}}\DeclareMathOperator{\Ue}{\mathrm{U}}\DeclareMathOperator{\seq}{\mathrm{seq}}\DeclareMathOperator{\cls}{\mathrm{cls}}\DeclareMathOperator{\injo}{\mathrm{inj_0}}\DeclareMathOperator{\Ver}{{\it M}}\DeclareMathOperator{\Adj}{\mathrm{Adj}}\DeclareMathOperator{\APinj}{\mathrm{inj}}\DeclareMathOperator{\Ann}{\mathrm{Ann}}\DeclareMathOperator{\ZUe}{\mathrm{ZU}}\DeclareMathOperator{\Var}{\mathrm{Var}}\DeclareMathOperator{\ZVar}{\mathrm{ZVar}}
\newcommand{\APbr}[1]{\mathrm{span}\{#1\}}
\newcommand{\APbre}[1]{\langle #1\rangle}
\newcommand{\apro}[1]{{\rm\setcounter{AP}{#1}\roman{AP}}}
\newcounter{AP}
\begin{document}
\author{Ivan Penkov}\address{Ivan Penkov: Jacobs University Bremen, Campus Ring 1, 28759 Bremen, Germany}\email{i.penkov@jacobs-university.de}\author{Alexey Petukhov}\address{Alexey Petukhov: The University of Manchester, Oxford Rd M13 9PL, Manchester, UK\\ on leave from the Institute for Information Transmission Problems, Bolshoy Karetniy 19-1, Moscow 127994, Russia}
\email{alex--2@yandex.ru}
\title{Annihilators of highest weight $\frak{sl}(\infty)$-modules}\maketitle
\begin{abstract} We give a criterion for the annihilator in $\Ue(\frak{sl}(\infty))$ of a simple highest weight $\frak{sl}(\infty)$-module to be nonzero. As a consequence we show that, in contrast with the case of $\frak{sl}(n)$, the annihilator in $\Ue(\frak{sl}(\infty))$ of any simple highest weight $\frak{sl}(\infty)$-module is integrable, i.e., coincides with the annihilator of an integrable $\frak{sl}(\infty)$-module. Furthermore, we define the class of ideal Borel subalgebras of $\frak{sl}(\infty)$, and prove that any prime integrable ideal in $\Ue(\frak{sl}(\infty))$ is the annihilator of a simple $\frak b^0$-highest weight module, where $\frak b^0$ is any fixed ideal Borel subalgebra of $\frak{sl}(\infty)$. This latter result is an analogue of the celebrated Duflo Theorem for primitive ideals.\medskip

{\bf Keywords:} primitive ideals, finitary Lie algebras, highest weight modules.

{\bf AMS subject classification:} 17B10, 17B35, 17B65. \end{abstract}
\section{Introduction}

The base field is $\mathbb C$. If $\frak g$ is a semisimple finite-dimensional Lie algebra, the celebrated Duflo Theorem states that any primitive two-sided ideal in the enveloping algebra $\Ue(\frak g)$ of $\frak g$ (i.e., any annihilator of a simple $\Ue(\frak g)$-module) is the annihilator of a simple highest weight $\frak g$-module.

The purpose of the present paper is to study primitive ideals in the enveloping algebra $\Ue(\frak{sl}(\infty))$ of the infinite-dimensional Lie algebra $\frak{sl}(\infty)$, and in particular to obtain a partial analogue of Duflo's Theorem for $\frak{sl}(\infty)$. Recall that the Lie algebra $\frak{sl}(\infty)$ can be defined in several equivalent ways, for instance as a direct limit $\varinjlim\limits_{n\ge 2}\frak{sl}(n)$~\cite{Ba1, Ba2, DP}.

The study of two-sided ideals in $\Ue(\frak{sl}(\infty))$ has been initiated by A.~Zhilinskii~\cite{Zh1, Zh2, Zh3}, and has been continued in~\cite{PP}. Zhilinskii's idea has been to study the joint annihilators of certain systems of $\frak{sl}(n)$-modules for variable $n>2$, more precisely, the joint annihilators of coherent local systems of finite-dimensional $\frak{sl}(n)$-modules as defined in~\cite{Zh1}. Zhilinskii has also provided a classification of coherent local systems~\cite{Zh1, Zh2}. We call the ideals introduced by Zhilinskii {\it integrable} (see Section~\ref{SSzhi} for the precise definition).

A corollary of the results in~\cite{PP} is that the associated ''variety'' of an arbitrary ideal in $\Ue(\frak{sl}(\infty))$ coincides with the associated ``variety'' of some integrable ideal in $\Ue(\frak{sl}(\infty))$.
We do not know whether any ideal in $\Ue(\frak{sl}(\infty))$ is integrable, however in the present paper we prove that the annihilator of any highest weight $\frak{sl}(\infty)$-module is an integrable ideal in $\Ue(\frak{sl}(\infty))$.

In order to recall the definition of a highest weight $\frak{sl}(\infty)$-module, we first need to recall the definition of a splitting Borel subalgebra of $\frak{sl}(\infty)$. According to~\cite{DP}, a splitting Borel subalgebra is a subalgebra of $\frak{sl}(\infty)$ which can be obtained as a direct limit of $\varinjlim\frak b_n$ of Borel subalgebras $\frak b_n\subset\frak{sl}(n)$ for a suitable presentation $\frak{sl}(\infty)$ as a direct limit $\varinjlim\limits_{n\ge 2}\frak{sl}(n)$. In contrast with the finite-dimensional case, the splitting Borel subalgebras of $\frak{sl}(\infty)$ are not conjugate by the group of automorphisms of $\frak{sl}(\infty)$; in fact, there are uncountably many conjugacy classes (and even isomorphism classes) of splitting Borel subalgebras of  $\frak{sl}(\infty)$. However, a $\frak b$-highest weight module is defined as usual as an $\frak{sl}(\infty)$-module generated by a 1-dimensional $\frak b$-submodule.

The difference between the structure of ideals in $\Ue(\frak{sl}(\infty))$ and in $\Ue(\frak g)$ for a finite-dimensional semisimple $\frak g$, becomes apparent in the fact that the annihilators in $\Ue(\frak{sl}(\infty))$ of many simple highest weight modules equal to zero. In this paper we give an explicit criterion for a simple $\frak b$-highest weight module to have nonzero annihilator. A further central result which we establish is that the annihilator of any $\frak b$-highest weight $\frak{sl}(\infty)$-module is integrable.

Our third notable result is an analogue of Duflo's Theorem. We define a special class of splitting Borel subalgebras $\frak b^0\subset\frak{sl}(\infty)$, which we call ideal, and prove that any prime integrable ideal of $\Ue(\frak{sl}(\infty))$ is the annihilator of a simple $\frak b^0$-highest weight module for any $\frak b^0$. The ideal Borel subalgebras $\frak b^0$ have the property that the adjoint representation of $\frak{sl}(\infty)$ is a $\frak b^0$-highest weight module.

The paper is structured as follows. In Section~\ref{Spre} we review some well known and some not so well known results about the Lie algebra $\frak{sl}(\infty)$ and its representations. Section~\ref{Sstate} contains a precise statement of our main results. 
The proofs are given in Sections~\ref{Spr1ta},~\ref{Spr1tb} and~\ref{Spr2t}. In Section~\ref{Snew} we characterize simple $\frak{sl}(\infty)$-modules which are determined up to isomorphism by their annihilators in $\Ue(\frak{sl}(\infty))$, under the assumption that the annihilator is integrable. 

{\bf Acknowledgements.} We thank I. Losev for his lectures on representation theory which helped us understand the combinatorics of two-sided ideals of $\Ue(\frak{sl}(n))$. We also thank the Max-Planck Institute for Mathematics in Bonn for its support and hospitality in 2012-2013. The first author acknowledges partial support by DFG through the Priority Program ''Representation Theory'' (SPP 1388). The second author acknowledges partial support by Jacobs University Bremen. 

\section{Preliminaries}\label{Spre}
\subsection{The Lie algebra $\frak{sl}(\infty)$}\label{SSslnot}The superscript $^*$ indicates dual space, and $\Sa^\cdot(\cdot)$ and $\Lambda^\cdot(\cdot)$ stand respectively for symmetric and exterior algebra. For a Lie algebra $\frak g$, $\Ue(\frak g)$ stands for the
universal enveloping algebra of $\frak g$. If $M$ is a $\frak g$-module, then $\Ann_{\Ue(\frak g)}M$ denotes the annihilator of $M$ in $\Ue(\frak g)$.

The Lie algebra $\frak{gl}(\infty)$ can be defined  as the Lie algebra of matrices $(a_{ij})_{i, j\in\mathbb Z_{>0}}$ each of which has at most finitely many nonzero entries. Equivalently, $\frak{gl}(\infty)$ can be defined by giving an explicit basis. Let $\{e_{ij}\}_{i, j\in\mathbb Z_{>0}}$ be a basis of a countable-dimensional vector space denoted by $\frak{gl}(\infty)$. Set $\widehat{\frak h}:=\APbr{e_{ii}}_{i\in\mathbb Z_{>0}}$. The structure of a Lie algebra on $\frak{gl}(\infty)$ is given by the formula
$$[e_{ij},
e_{kl}]=\delta_{jk}e_{il}-\delta_{il}e_{kj},$$
where $i, j\in\mathbb Z_{>0} $ and $\delta_{mn}$ is Kronecker's delta.

Next, one defines $\frak{sl}(\infty)$ as the commutator subalgebra of $\frak{gl}(\infty)$: $$\frak{sl}(\infty):=[\frak{gl}(\infty), \frak{gl}(\infty)].$$ We set$$\frak h:=\widehat{\frak h}\cap\frak{sl}(\infty).$$ Clearly, $\widehat{\frak h}$ is a maximal commutative subalgebra of $\frak{gl}(\infty)$, and $\frak h$ is a maximal commutative subalgebra of $\frak{sl}(\infty)$. Moreover, $\frak{gl}(\infty)$ has the following root decomposition $$\frak{gl}(\infty)=\widehat{\frak h}\oplus\bigoplus_{\alpha\in\Delta}\frak{gl}(\infty)^{\alpha},$$
similar to the usual root decomposition of $\frak{gl}(n)$. Here $\Delta=\{\varepsilon_i-\varepsilon_j\}_{i, j\in\mathbb Z_{>0}}$ where the system of vectors $\{\varepsilon_j\}_{j\in\mathbb Z_{>0}}$ in $\widehat{\frak h}^*$ is dual to the basis $\{e_{ii}\}_{i\in\mathbb Z_{>0}}$ of $\widehat{\frak h}$. The Lie subalgebra $\frak{sl}(\infty)$ inherits this root decomposition:$$\frak{sl}(\infty)=\frak h\oplus\bigoplus_{\alpha\in\Delta}\frak{sl}(\infty)^{\alpha},$$where $\frak{sl}(\infty)^\alpha=\frak{gl}(\infty)^\alpha$ for $\alpha\in\Delta$.

It is not difficult to prove that any Lie algebra obtained as a direct limit $\varinjlim\limits_{n\ge2}\frak{sl}(n)$ is isomorphic to $\frak{sl}(\infty)$ as defined above. Moreover, a general definition of a {\it splitting Cartan subalgebra} $\frak h'$ of $\frak{sl}(\infty)$ is as a  direct limit of Cartan subalgebras $\frak h_n'$ of $\frak{sl}(n)$, where $\frak{sl}(\infty)$ is identified with $\varinjlim\limits_{n\ge2}\frak{sl}(n)$. Then, as noted in~\cite{DPSn}, all splitting Cartan subalgebras of $\frak{sl}(\infty)$ are conjugate via the automorphism group $\APAut \frak{sl}(\infty)$. This enables us to henceforth restrict ourselves to considering only the fixed splitting Cartan subalgebra $\frak h$ of $\frak{sl}(\infty)$ introduced above.

 A {\it splitting Borel subalgebra} of $\frak{sl}(\infty)\cong \varinjlim\limits_{n\ge2}\frak{sl}(n)$ can be defined as a direct limit $\varinjlim\limits_{n\ge2}\frak b_n$ of Borel subalgebras $\frak b_n\subset\frak{sl}(n)$, see~\cite{DP}. Since a general splitting Borel subalgebra of $\frak{sl}(\infty)$ is conjugate under $\APAut(\frak{sl}(\infty))$ to a splitting Borel subalgebra containing our fixed splitting Cartan subalgebra $\frak h\subset\frak{sl}(\infty)$, in what follows we only consider splitting Borel subalgebras containing $\frak h$.
 The latter Borel subalgebras are given by the following construction. We say that a subset $\Delta^*\subset\Delta$ is a subset {\it of positive roots} if

(1) for any root $\alpha\in\Delta$, precisely one of $\alpha$ and $-\alpha$ belongs to $\Delta^*$;

(2) $\alpha, \beta\in\Delta^*$ and $\alpha+\beta\in\Delta$ imply $\alpha+\beta\in\Delta^*$.\\
To any positive subset of roots $\Delta^*$ we assign the Borel subalgebra $\frak b(\Delta^*):=\frak h\bigoplus\limits_{\alpha\in\Delta^*}\frak{sl}(\infty)^{\alpha}$ of $\frak{sl}(\infty)$, and in this way we obtain all splitting Borel subalgebras of $\frak{sl}(\infty)$ containing $\frak h$.

This leads naturally to the observation~\cite{DP} that the splitting Borel subalgebras containing $\frak h$ are in one-to-one correspondence with linear orders on $\mathbb Z_{>0}$: given such a linear order $\prec$, the corresponding subset of positive roots is $\{\varepsilon_i-\varepsilon_j\}_{i\prec j}$.

It is easy to see that different Borel subalgebras containing $\frak h$ do not have to be $\APAut\frak{sl}(\infty)$-conjugate, as they simply may not be isomorphic as abstract Lie algebras. Consider, for instance, the following three linear orders on $\mathbb Z_{>0}$:

(\apro{1}) $...\prec 5\prec 3\prec 1\prec 2\prec 4\prec 6\prec...$,

(\apro 2) $1\prec 2\prec 3\prec...$,

(\apro 3) $1\prec3\prec5\prec...\prec2n+1\prec...\prec2n\prec...\prec4\prec2$.\\
The reader can check that the corresponding Borel subalgebras are not isomorphic as Lie algebras.

\subsection{$S$-notation}\label{SSsnot}
Let $S$ be a subset of $\mathbb Z_{>0}$. We denote by $\frak{sl}(S)$ the subalgebra of $\frak{sl}(\infty)$ spanned by \begin{center}$\{e_{ij}\}_{i, j\in S, i\ne j}\hspace{10pt}$and\hspace{10pt}$\{e_{ii}-e_{jj}\}_{i, j\in S}.$\end{center}
Then $\frak{sl}(\mathbb Z_{>0})=\frak{sl}(\infty)$.

Set $\frak h_S:=\frak h\cap\frak{sl}(S)$. Note that

(1) if $S$ is finite, then $\frak{sl}(S)$ is isomorphic to $\frak{sl}(n)$ where $n=|S|$ is the cardinality of $S$, and $\frak h_S$ is a Cartan subalgebra of $\frak{sl}(S)$;

(2) if $S$ is infinite, then $\frak{sl}(S)$ is isomorphic to $\frak{sl}(\infty)$, and $\frak h_S$  is a splitting Cartan subalgebra of $\frak{sl}(S)$.\\
Next, we fix a splitting Borel subalgebra $\frak b\supset\frak h$ of $\frak{sl}(\infty)$ and put $\frak b_S:=\frak{sl}(S)\cap\frak b$. We note that

(1) if $S$ is finite, then $\frak b_S$ is a Borel subalgebra of $\frak{sl}(S)$,

(2) if $S$ is infinite, then $\frak b_S$ is a splitting Borel subalgebra of $\frak{sl}(S)$.

Let $\mathbb C^S$ denote the set of functions from $S$ to $\mathbb C$. Clearly, $\mathbb C^S$ is a vector space of
dimension~$|S|$. When $S=\{1,..., n\}$ we write simply $\mathbb C^n$ instead of $\mathbb C^{\{1,..., n\}}$. There is a surjective
homomorphism from $\mathbb C^S$ to $\frak h_S^*$:\begin{equation}f\mapsto \lambda_f,\hspace{10pt}
\lambda_f(e_{ii}-e_{jj})=f(i)-f(j).\end{equation}

For any $f\in \mathbb C^S$ we denote by $|f|$ the the cardinality of the image of~$f$. A weight $\lambda\in\frak h^*_S$ is {\it $\frak{sl}(S)$-integral}, or simply {\it integral}, if $\lambda(e_{ii}-e_{jj})\in\mathbb Z$ for all $i, j\in S$. Respectively, a function $f\in\mathbb C^S$ is {\it integral} if $f(i)-f(j)\in\mathbb Z$ for all $i, j\in S$. A function is {\it almost integral} if there exists a finite subset $F\subset S$ such that $f|_{S\backslash F}$ is integral.

If $\frak b_S\supset\frak h_S$ is a fixed splitting Borel subalgebra of $\frak{sl}(S)$, then an integral weight $\lambda\in\frak h_S^*$ is {\it $\frak b_S$-dominant} if $\lambda(e_{ii}-e_{jj})\ge0$ for $i\prec j$ where the order $\prec$ on $S$ is determined by $\frak b_S$. Respectively, an integral function $f\in \mathbb C^S$ is {\it $\prec$-dominant} if $f(i)-f(j)\ge0$ for $i\prec j$.

Let $\prec$ be a linear order on $S$, and let $S=S_1\sqcup ...\sqcup S_t$ be a finite partition of $S$. We say that the partition $\{S_i\}_{i\le t}$ is {\it compatible} with the order $\prec$ if\begin{center}$i_0\prec j_0\Leftrightarrow i<j$\end{center}for any
$i\ne j\le t$ and any $i_0 \in S_i, j_0\in S_j$. Finally, we say that $f\in \mathbb C^S$ is {\it locally constant with respect to} $\prec$ if there exists
a compatible partition $S_1\sqcup...\sqcup S_t$ of $S$, such
that $f$ is constant on $S_i$ for any $i\le t$.

We call a splitting Borel subalgebra $\frak b_S\supset \frak h_S$ of $\frak{sl}(S)$ {\it ideal} if there is a partition $S=S_1\sqcup S_2\sqcup S_3$, compatible with the order $\prec$ defined by $\frak b_S$, such that

(a) $S_1$ is countable and $\prec$ restricted to $S_1$ is isomorphic to the standard order on $\mathbb Z_{>0}$,

(b) $S_3$ is countable and $\prec$ restricted to $S_3$ is isomorphic to the standard order on $\mathbb Z_{<0}$\\
($S_2$ may be empty). Clearly the Borel subalgebra defined by the above order (\apro{3}) is ideal, while the Borel subalgebras defined by (\apro{1}) and (\apro{2}) are not ideal.

\subsection{Highest weight $\frak{sl}(S)$-modules}\label{SSmi}
Fix a splitting Borel subalgebra $\frak b_S$ of $\frak{sl}(S)$, corresponding to a linear order $\prec$ on $S$. A {\it Verma module} is defined as an induced module
$$\Ver_{\frak b_S}(f):=\Ue(\frak{sl}(S))\otimes_{\Ue(\frak b_S)}\mathbb C_f,$$
where $\mathbb C_f$ is a one-dimensional $\frak b_S$-module
determined by a weight $\lambda_f\in\frak h_S^*$. By definition, a $\frak b_S$-{\it{}highest weight module} is an $\frak{sl}(\infty)$-module isomorphic to a quotient of $\Ver_{\frak b_S}(f)$. It is not difficult to prove that $\Ver_{\frak b_S}(f)$ has a unique simple quotient $L_{\frak b_S}(f)$, see~\cite{DP}. 

As $S$ and $\frak b_S$ are fixed, in the rest of Section~\ref{SSmi} we write simply $M(f)$ and $L(f)$ instead of $M_{\frak b_S}(f)$ and $L_{\frak b_S}(f)$. We fix also a function $f\in\mathbb C^{\mathbb Z_{>0}}$ and a highest weight vector $v$ of $L(f)$. For any subset $S'\subset S$ we denote by $\widehat L(f|_{S'})$ the $\frak{sl}(S')$-submodule of $L(f)$ generated by $v$. Obviously $\widehat L(f|_{S'})$ is a quotient of $M(f|_{S'})$, and $L(f|_{S'})$ is the unique simple quotient of $\widehat L(f|_{S'}).$

For any finite subset $F\subset S$, let $w_F$ be a fixed highest weight vector in $M(f|_F)$, and let $v_F$ be its image in $L(f|_F)$. Let $F\subset F'\subset S$ be two finite subsets. Then there exists a unique morphism of $\frak{sl}(F)$-modules $$\psi_{F, F'}: M(f|_F)\to L(f|_{F'})$$
such that $w_F\mapsto v_{F'}$. It is clear that if $F''\supset F'\supset F$ then $\ker \psi_{F, F''}\subset \ker \psi_{F, F'}$. Since the $\frak{sl}(F)$-module $M(f|_F)$ has finite length, there exists a sufficiently large finite set $\bar F\supset F$ such that $\ker \psi_{F, \bar F}\subset \ker\psi_{F', F}$ for any finite set $F'\supset F$. We put $\psi_F:=\psi_{F, \bar F}$.

\begin{proposition}\label{Pfinf}The $\frak{sl}(F)$-module $\widehat L(f|_F)$ is isomorphic to the image of $\psi_F$.\end{proposition}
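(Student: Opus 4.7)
The plan is to realize both $\widehat L(f|_F)$ and $\Ima\psi_F$ as quotients of the Verma module $M(f|_F)$ and prove that the two quotient maps have the same kernel. Denote by $\alpha_F\colon M(f|_F)\twoheadrightarrow\widehat L(f|_F)$ the unique $\frak{sl}(F)$-module surjection sending $w_F$ to $v$; then $\widehat L(f|_F)\cong M(f|_F)/\ker\alpha_F$, whereas $\Ima\psi_F\cong M(f|_F)/\ker\psi_F$. Hence it suffices to prove $\ker\alpha_F=\ker\psi_F$.

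For the inclusion $\ker\alpha_F\subset\ker\psi_F$, I would observe that for every finite $F'\supset F$ there is a natural inclusion $i_{F,F'}\colon\widehat L(f|_F)\hookrightarrow\widehat L(f|_{F'})$ of $\frak{sl}(F)$-modules, since both are the $\frak{sl}(F)$- (respectively $\frak{sl}(F')$-) submodules of $L(f)$ generated by $v$. Composing with the canonical surjection $\pi_{F'}\colon\widehat L(f|_{F'})\twoheadrightarrow L(f|_{F'})$ onto the unique simple quotient, we get an $\frak{sl}(F)$-map $M(f|_F)\to L(f|_{F'})$ sending $w_F$ to $\pi_{F'}(v)$, which is a nonzero scalar multiple of $v_{F'}$. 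By the universal property of $M(f|_F)$, this composition equals that scalar times $\psi_{F,F'}$, so $\ker\alpha_F\subset\ker\psi_{F,F'}$ for every $F'$; taking $F'=\bar F$ yields $\ker\alpha_F\subset\ker\psi_F$.

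For the converse inclusion I would use the simplicity of $L(f)$. Note first that $L(f)=\bigcup_{F'}\widehat L(f|_{F'})$, where the union runs over finite $F'\subset S$, because $\Ue(\frak{sl}(S))=\bigcup_{F'}\Ue(\frak{sl}(F'))$ and $L(f)=\Ue(\frak{sl}(S))v$. Suppose now that $x\in\ker\psi_F$ but $y:=\alpha_F(x)\ne 0$ in $L(f)$. By simplicity of $L(f)$, there exists $u\in\Ue(\frak{sl}(S))$ with $u\cdot y=v$. Choose a finite $F'\supset\bar F$ with $u\in\Ue(\frak{sl}(F'))$. Then $\Ue(\frak{sl}(F'))\cdot y$ is an $\frak{sl}(F')$-submodule of $\widehat L(f|_{F'})$ containing $v$, hence equal to $\widehat L(f|_{F'})$; that is, $y$ generates the highest weight $\frak{sl}(F')$-module $\widehat L(f|_{F'})$. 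Since $\ker\pi_{F'}$ is the unique maximal proper submodule of $\widehat L(f|_{F'})$, it cannot contain any generator, so $\pi_{F'}(y)\ne 0$. But then $\psi_{F,F'}(x)$ is a nonzero scalar multiple of $\pi_{F'}(y)$, contradicting $x\in\ker\psi_F=\ker\psi_{F,F'}$ (using $F'\supset\bar F$). Therefore $\alpha_F(x)=0$, proving $\ker\psi_F\subset\ker\alpha_F$.

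The main technical point is the converse inclusion: one has to exploit simplicity of $L(f)$ to transfer information from the family of finite-dimensional quotients $L(f|_{F'})$ back to $L(f)$ itself. The key observation making this work is the standard fact that a generator of a highest weight module cannot lie in its unique maximal proper submodule; given this, the rest of the argument is a diagram chase that keeps track of the compatibility of the various quotient maps under enlargement of $F'$.
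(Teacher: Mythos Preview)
Your proof is correct and takes a genuinely different route from the paper's. The paper argues globally: it builds a direct system out of the images $\Ima\psi_F$ (using that $\Ima\psi_F$ embeds into $\Ima\psi_{F'}$ for $F\subset F'$), calls the limit $\tilde L(f)$, and then exhibits a pair of $\frak{sl}(\infty)$-morphisms $\tilde L(f)\to L(f)$ and $L(f)\to\tilde L(f)$ exchanging highest weight vectors; since $L(f)$ is simple these are forced to be mutually inverse, and one then reads off that the $\frak{sl}(F)$-submodule generated by the highest weight vector in $\tilde L(f)$ is exactly $\Ima\psi_F$.

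By contrast, you stay entirely local and compare the two kernels in $M(f|_F)$ directly. The inclusion $\ker\alpha_F\subset\ker\psi_F$ is the easy one (and is essentially the same factorization $M(f|_F)\to\widehat L(f|_F)\to\widehat L(f|_{\bar F})\to L(f|_{\bar F})$ that the paper uses to produce the map $L(f)\to\tilde L(f)$). The interesting half is your converse inclusion, where you exploit the simplicity of $L(f)$ to promote any nonzero $y\in\widehat L(f|_F)$ to a \emph{generator} of $\widehat L(f|_{F'})$ for a suitably large $F'\supset\bar F$, and then use that a generator of a highest weight module cannot lie in the unique maximal proper submodule. Together with the stabilization $\ker\psi_{F,F'}=\ker\psi_F$ for $F'\supset\bar F$, this gives the contradiction cleanly.

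Your argument is more elementary and self-contained; it avoids the direct-limit bookkeeping and the implicit ``two maps compose to the identity'' check. The paper's approach has the minor conceptual benefit of producing, as a byproduct, a concrete model $\tilde L(f)=\varinjlim\Ima\psi_F$ of $L(f)$ assembled from finite-dimensional pieces; but this is not used independently later, so nothing is lost by your shortcut.
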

\begin{proof}Let $F\subset F'$ be two finite subsets of $S$. There exists a finite subset $\bar{\bar F}\subset S$ such that $F\subset\bar{\bar F},$ $\ker \psi_{F, \bar{\bar F}}=\ker\psi_F,$ and $\ker\psi_{F', \bar{\bar F}}=\ker \psi_{F'}$. Then $\Ima \psi_{F, \bar{\bar F}}$ is isomorphic to $\Ima\psi_F$ and is equal to $\Ue(\frak{sl}(F))\cdot v_{\bar{\bar F}}$, and $\Ima\psi_{F', \bar{\bar F}}$ is isomorphic to $\Ima\psi_{F'}$ and is equal to $\Ue(\frak{sl}(F'))\cdot v_{\bar{\bar F}}$. This defines an embedding of $\Ima\psi_F$ to $\Ima\psi_{F'}$ such that $\psi_F(w_F)\mapsto \psi_{F'}(w_{F'})$.

The limit of the direct system of such morphisms over all finite subsets $F$ of $S$ defines an $\frak{sl}(\infty)$-module $\tilde L(f)$. Clearly, the direct limit of the vectors $\psi_F(w_F)$ is a highest weight vector of weight $\lambda_f$ in $\tilde L(f)$. Denote this vector by $\tilde v$. We claim that $\tilde L(f)$ is isomorphic to $L(f)$. For the proof we provide two $\frak{sl}(\infty)$-morphisms $L(f)\to \tilde L(f)$ and $\tilde L(f)\to L(f)$ such that $v\mapsto \tilde v$ and $\tilde v\mapsto v$ respectively.

The morphism $\tilde L(f)\to L(f)$ arises from the fact that $\tilde L(f)$ is a highest weight module with highest weight $\lambda_f$. We may assume that under this morphism $\tilde v$ goes to $v$ (in general, $\tilde v$ maps to some vector proportional to $v$). Now we construct a morphism $L(f)\to \tilde L(f)$. For any set $F$ we pick $\bar F$ as described above and consider the chain
$$M(f|_F)\to\widehat L(f|_F)\to\widehat L(f|_{\bar F})\to L(f|_{\bar F})$$
of $\frak{sl}(F)$-morphisms whose composition is $\psi_F$. This defines an $\frak{sl}(F)$-morphism $\widehat L(f|_F)\to \Ima\psi_F$. By passing to the direct limit, we obtain the desired $\frak{sl}(\infty)$-morphism $L(f)\to \tilde L(f)$.

Since the $\frak{sl}(F)$-submodule of $\tilde L(f)$ generated by the image of $v$ in $\tilde L(f)$ is isomorphic to $\Ima\psi_F$, the proposition is proved.\end{proof}

Any compatible partition $S_1\sqcup...\sqcup S_t$ of $S$ defines a parabolic subalgebra of $\frak{sl}(S)$: this is the algebra $\frak p$ with root decomposition $$\frak h_S\oplus\bigoplus\limits_{i\prec j\mbox{~or~}i\ne j\in S_k\mbox{~for~some~}k}\mathbb Ce_{ij}.$$
We set $\frak p=\frak l \supsetplus\frak n,$ where $$\frak l:=\frak h_S+\bigoplus\limits_{i\in S_k}\frak{sl}(S_i),\hspace{10pt}\frak n:=\bigoplus\limits_{e_{ij}\notin\frak l, i\prec j}\mathbb Ce_{ij}.$$
Set also $\frak n^-:=\bigoplus\limits_{e_{ij}\notin\frak l, j\prec i}\mathbb Ce_{ij}.$

\begin{proposition}\label{Lpind}Let $S_1\sqcup...\sqcup S_t$ be a compatible partition of $S$, and $f\in\mathbb C^S$ be a function such that
$$f(k')-f(l')\notin \mathbb Z$$for all  $k'\in S_i, l'\in S_l$ where $i<l$. Then $L(f)$ is isomorphic to $$\Ue(\frak{sl}(S))\otimes_{\Ue(\frak p)}L(f)^{\frak n},$$ where $L(f)^{\frak n}$ stands for the $\frak n$-invariants of $L(f)$. Moreover, as an $\frak{sl}(S_1)\oplus...\oplus\frak{sl}(S_t)$-module, $L(f)^{\frak n}$ is isomorphic to
$$L(f|_{S_1})\otimes...\otimes L(f|_{S_t}).$$\end{proposition}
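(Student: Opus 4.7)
The plan is to construct the candidate module
$$N := \Ue(\frak{sl}(S))\otimes_{\Ue(\frak p)}\tilde E, \qquad \tilde E:=L(f|_{S_1})\otimes\cdots\otimes L(f|_{S_t}),$$
where $\tilde E$ is viewed as an $\frak l$-module (tensor product of the simple $\frak{sl}(S_i)$-modules, with $\frak h_S$ acting through $\lambda_f$ on the tensor product $\tilde v$ of highest weight vectors) and extended to a $\frak p$-module by $\frak n\cdot\tilde E = 0$. The class $1\otimes\tilde v$ is then a $\frak b_S$-highest weight vector of weight $\lambda_f$ generating $N$, yielding a canonical surjection $\pi\colon N\twoheadrightarrow L(f)$. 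It suffices to show that $N$ is simple, which gives $N\cong L(f)$ and, by inspection, $L(f)^{\frak n}\cong\tilde E$ as $\frak l$-modules.

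I would first verify that $\tilde E$ is simple as an $\frak l$-module. Since $[\frak l,\frak l]=\bigoplus_i\frak{sl}(S_i)$ is a direct sum of pairwise commuting ideals and each $L(f|_{S_i})$ is simple with scalar endomorphism ring (by Dixmier's lemma, since $\Ue(\frak{sl}(S_i))$ has countable dimension over $\mathbb C$), $\tilde E$ is simple over $[\frak l,\frak l]$; the remaining directions in $\frak h_S$ act by scalars on weight spaces, so $\tilde E$ is also simple over $\frak l$. Granting the identity $N^{\frak n}=1\otimes\tilde E$, simplicity of $N$ follows: for any weight vector $w\in N$ of weight $\mu$, the weights of $\Ue(\frak n)\cdot w$ lie in $\mu+\sum_{\alpha\in\Delta(\frak n)}\mathbb Z_{\geq 0}\alpha$ and are bounded above by $\lambda_f$, forcing $\Ue(\frak n)\cdot w$ to be finite-dimensional; hence any nonzero $\frak{sl}(S)$-submodule $N'\subseteq N$ contains a nonzero $\frak n$-invariant, which by assumption lies in $\tilde E$, and then simplicity of $\tilde E$ forces $\tilde E\subseteq N'$, giving $N'=\Ue(\frak{sl}(S))\cdot\tilde E=N$.

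The main obstacle is to verify $N^{\frak n}=1\otimes\tilde E$, and this is precisely where the non-integrality hypothesis enters. By PBW, $N\cong\Ue(\frak n^-)\otimes_{\mathbb C}\tilde E$ as vector spaces. Suppose $w=\sum_\gamma u_\gamma\otimes e_\gamma$ is a weight vector with $u_\gamma\in\Ue(\frak n^-)_{-\gamma}$ and some $u_{\gamma_0}$ of positive PBW degree. Pick a crossing positive root $\alpha=\varepsilon_a-\varepsilon_b$ (with $a\in S_p$, $b\in S_q$, $p<q$) whose opposite root vector $e_{ba}$ appears in a PBW monomial of $u_{\gamma_0}$, and apply $e_{ab}\in\frak n$. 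Commuting $e_{ab}$ rightward through the factors of $u_{\gamma_0}$ via PBW, the only commutator producing a Cartan element is $[e_{ab},e_{ba}]=e_{aa}-e_{bb}\in\frak h_S$; all other commutators yield root vectors of $\frak l$, which can be absorbed across the tensor product without changing the overall PBW degree. The leading contribution is therefore a scalar multiple of $e_{\gamma_0}$ with coefficient $f(a)-f(b)+n$ for some integer $n$ depending only on the Levi weight shift of $e_{\gamma_0}$, and the hypothesis $f(a)-f(b)\notin\mathbb Z$ makes this coefficient nonzero. Thus $e_{ab}\cdot w\neq 0$, contradicting $w\in N^{\frak n}$.

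To keep the PBW bookkeeping manageable I would reduce by induction on $t$ to the base case $t=2$, using transitivity of parabolic induction: with $S=S_1\sqcup S'$ for $S'=S_2\sqcup\cdots\sqcup S_t$, one combines the $t=2$ case applied to the pair $(S_1,S')$ with the inductive hypothesis applied to $f|_{S'}$ on $S'$. In the base case $\frak n$ is abelian (the sum of two crossing positive roots is never a root), which drastically simplifies the commutator analysis above.
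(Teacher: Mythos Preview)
Your overall strategy---build $N$ first and prove it simple---is sound and genuinely different from the paper's route. The paper works inside $L(f)$: it sets $L_{\frak p}:=\Ue(\frak l)\cdot v\subset L(f)$, obtains the surjection $\alpha$ from the induced module, and shows $\alpha$ is an isomorphism by proving that the multiplication map $\beta:\Ue(\frak n^-)\otimes_{\mathbb C}L_{\frak p}\to L(f)$ is injective. Injectivity of $\beta$ is reduced to finite subsets $F\subset S$ via Proposition~\ref{Pfinf}, so ultimately the paper invokes the finite-dimensional irreducibility of parabolic Verma modules with non-integral data rather than a direct PBW computation.

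The heart of your argument, the verification $N^{\frak n}=1\otimes\tilde E$, has a real gap. You fix one $\gamma_0$ with $u_{\gamma_0}$ of positive degree, pick $e_{ba}$ occurring in $u_{\gamma_0}$, and assert that applying $e_{ab}$ yields a leading term ``a scalar multiple of $e_{\gamma_0}$'' with coefficient $f(a)-f(b)+n$. But the other top-degree summands $u_\gamma\otimes e_\gamma$ contribute to the very same component: commuting $e_{ab}$ through $u_\gamma$ produces $\frak l$-root-vectors which, acting on $e_\gamma$, can land in the weight space of $e_{\gamma_0}$. Concretely, take $t=2$, $S_1=\{1\}$, $S_2=\{2,3\}$ with $1\prec 2\prec 3$, and
\[
w=e_{21}^2\otimes e_{32}v+c\,e_{21}e_{31}\otimes v.
\]
A direct computation gives that the coefficient of $e_{21}\otimes e_{32}v$ in $e_{12}\cdot w$ equals $2(f(1)-f(2))-c$, which vanishes for $c=2(f(1)-f(2))$ regardless of non-integrality; so the single-root test you describe does not suffice. (For this particular $w$ one then checks $e_{13}\cdot w\ne 0$ under the hypothesis, so indeed $w\notin N^{\frak n}$---but showing this in general requires a more global argument than the one you sketch.) The cleanest repair is to reduce to finite $F\subset S$ and quote the standard irreducibility criterion for parabolic Verma modules over $\frak{sl}(F)$ with non-integral relative infinitesimal character; this is the same finite-dimensional input underlying the paper's proof.

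A smaller point: your claim that $\Ue(\frak n)\cdot w$ is finite dimensional need not hold when $S$ is infinite, since $\frak n$ is then infinite dimensional. What you actually need---and what the height argument does give---is that $\frak n$ acts locally nilpotently on $N$ (each weight vector is killed by a sufficiently high power of $\frak n$), which already forces any nonzero submodule to contain a nonzero $\frak n$-invariant.
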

\begin{proof}We set $L_\frak p:=\Ue(\frak l)\cdot v$. Standard arguments show that $L_\frak p$ is a simple $\frak l$-module and that $L_\frak p=L(f)^{\frak n}$. Therefore we have a natural surjective $\frak{sl}(S)$-morphism
$$\alpha: \Ue(\frak{sl}(S))\otimes_{\Ue(\frak p)}L_\frak p\to L(f).$$
We claim that $\alpha$ is an isomorphism. For this it suffices to show that
$$\beta: \Ue(\frak n^-)\otimes_{\mathbb C}L_\frak p\to L(f)\hspace{10pt}(n\otimes l\mapsto nl)$$is injective. However, the injectivity of $\beta$ follows from the fact established above that the natural map $\widehat L(f|_F)\to L(f|_{\bar F})$ is an injection for any finite subset $F\subset S$.

Next, one notes that the simplicity of $L_\frak p$ as a $\frak l$-module implies its simplicity as an $[\frak l, \frak l]$-module. This follows from the fact that any $\frak h$-weight space of $L_\frak p$ is also an $(\frak h\cap[\frak l, \frak l])$-weight space. Then, since
$L(f|_{S_1})\otimes...\otimes L(f|_{S_t})$ and $L_\frak p$ are simple $([\frak l, \frak l]\cap \frak b_S)$-highest weight $(\frak{sl}(S_1)\oplus...\oplus\frak{sl}(S_t)=[\frak l, \frak l])$-modules with the same highest weight, they are isomorphic.
\end{proof}

We say that an $\frak{sl}(S)$-module $M$ is {\it integrable}, if
$$\dim (\Ue(\frak g)\cdot m)<\infty$$for any $m\in M$ and any
finite-dimensional Lie subalgebra $\frak g\subset\frak{sl}(S)$.
\begin{proposition}\label{Ldom-} Let $S_1\sqcup ...\sqcup S_t$ be a partition of $S$ compatible with $\prec$. Then the $\frak{sl}(S)$-module $L(f)$ is $\frak{sl}(S_k)$-integrable if and only if
$f|_{S_k}\in \mathbb C^{S_k}$ is dominant.\end{proposition}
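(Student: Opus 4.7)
The plan is to prove the two implications separately, using the shared preliminary observation that, because the partition $S_1\sqcup\ldots\sqcup S_t$ is compatible with $\prec$, the highest weight vector $v$ of $L(f)$ is a $\frak b_{S_k}$-highest weight vector of weight $f|_{S_k}$ for the action of $\frak{sl}(S_k)$: for $i\prec j$ in $S_k$ one has $e_{ij}\in \frak b$ and hence $e_{ij}v=0$, while $(e_{ii}-e_{jj})v=(f(i)-f(j))v$. For the ``only if'' direction, assume $L(f)$ is $\frak{sl}(S_k)$-integrable. For each pair $i\prec j$ in $S_k$ the $\frak{sl}(2)$-triple spanned by $e_{ij}$, $e_{ji}$, $e_{ii}-e_{jj}$ is a finite-dimensional subalgebra of $\frak{sl}(S_k)$, so integrability forces $\dim\Ue(\frak{sl}(2))\cdot v<\infty$. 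Inside this finite-dimensional $\frak{sl}(2)$-module $v$ is annihilated by $e_{ij}$ and has $(e_{ii}-e_{jj})$-weight $f(i)-f(j)$, so the classification of finite-dimensional simple $\frak{sl}(2)$-modules forces $f(i)-f(j)\in\mathbb Z_{\ge 0}$; hence $f|_{S_k}$ is dominant.

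For the converse, assume $f|_{S_k}$ is dominant and fix $m\in L(f)$ and a finite $F\subset S_k$. Writing $m=Xv$ with $X\in\Ue(\frak{sl}(T))$ for some finite $T\subset S$ and setting $T':=T\cup F$, one has $\Ue(\frak{sl}(F))\cdot m\subset\Ue(\frak{sl}(T'))\cdot v=\widehat L(f|_{T'})$. By Proposition~\ref{Pfinf}, $\widehat L(f|_{T'})$ embeds into the simple highest weight $\frak{sl}(\bar{T'})$-module $L(f|_{\bar{T'}})$ for a suitable finite $\bar{T'}\supset T'$, so it is enough to prove that $L(f|_{\bar{T'}})$ is locally $\frak{sl}(F)$-finite. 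Here the compatibility of the partition with $\prec$ becomes essential: $\bar{T'}\cap S_k$ is a $\prec$-contiguous block of $\bar{T'}$, so writing $\bar{T'}\cap S_k=\{k_1\prec\ldots\prec k_p\}$, each pair $k_r,k_{r+1}$ is consecutive in all of $\bar{T'}$, and therefore $\varepsilon_{k_r}-\varepsilon_{k_{r+1}}$ is a simple root of $\frak{sl}(\bar{T'})$. Dominance of $f|_{S_k}$ yields $f(k_r)-f(k_{r+1})\in\mathbb Z_{\ge 0}$ for $r<p$, and by the standard parabolic-category-$\mathcal O$ fact that a simple highest weight module is locally finite over the Levi subalgebra generated by the simple-root $\frak{sl}(2)$-triples along which the highest weight is dominant integral, the Levi subalgebra of $\frak{sl}(\bar{T'})$ generated by $\frak h_{\bar{T'}}$ and the $\frak{sl}(2)$-triples attached to the roots $\varepsilon_{k_r}-\varepsilon_{k_{r+1}}$ acts locally finitely on $L(f|_{\bar{T'}})$. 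This Levi contains $\frak{sl}(\bar{T'}\cap S_k)\supset\frak{sl}(F)$, and the desired finite-dimensionality follows.

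The main obstacle is the ``if'' direction, which combines the reduction to a genuinely finite-dimensional highest weight module via Proposition~\ref{Pfinf} with the combinatorial point that the compatibility of the partition with $\prec$ promotes the internal simple roots of $\frak{sl}(\bar{T'}\cap S_k)$ to actual simple roots of $\frak{sl}(\bar{T'})$. Without that compatibility these would generally be non-simple positive roots of $\frak{sl}(\bar{T'})$, and the parabolic category~$\mathcal O$ result would not directly yield local $\frak{sl}(F)$-finiteness of $L(f|_{\bar{T'}})$; one would then have to rely on a subtler description of the set of integrable roots of $L(f|_{\bar{T'}})$.
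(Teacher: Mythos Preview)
Your proof is correct and follows essentially the same route as the paper: both directions match the paper's argument, with the ``only if'' direction handled by the $\frak{sl}(2)$-action on the highest weight vector, and the ``if'' direction handled by reducing via Proposition~\ref{Pfinf} to a finite-dimensional simple highest weight module $L(f|_{\bar{T'}})$ and then invoking the standard fact that such a module is locally finite over the Levi corresponding to the simple roots on which the highest weight is dominant integral. Your write-up is in fact more explicit than the paper's: you spell out that compatibility of the partition with $\prec$ makes $\bar{T'}\cap S_k$ a $\prec$-contiguous block of $\bar{T'}$, so that the internal simple roots of $\frak{sl}(\bar{T'}\cap S_k)$ are genuine simple roots of $\frak{sl}(\bar{T'})$, which is exactly what the paper's appeal to the ``well-known fact'' leaves implicit.
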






\begin{proof} Assume that $L(f)$ is $\frak{sl}(S_k)$-integrable. Then the $(\frak{sl}(2)\cong\frak{sl}(\{i, j\}))$-module $\Ue(\frak{sl}(\{i, j\}))\cdot v$ is integrable for $i, j\in S_k$. Hence $f(i)-f(j)\in\mathbb Z$ for $i, j\in S_k$ by a well-known statement about $\frak{sl}(2)$.

Now we wish to prove that if $f|_{S_k}$ is dominant, then $L(f)$ is $\frak{sl}(S_k)$-integrable. Clearly, it suffices to show that $\widehat L(f|_F)$ is $\frak{sl}(S_k\cap S)$-integrable for any finite subset $F\subset S$. According to Proposition~\ref{Pfinf}, $\widehat L(f|_F)\cong\Ima\psi_F$. The fact that $\Ima\psi_F$ is $\frak{sl}(S_k\cap S)$-integrable follows from the well-known fact, concerning modules over finite-dimensional Lie algebras, that, for any finite subset $F'\subset S$, the $\frak{sl}(F')$-module $L(f|_{F'})$ is $\frak{sl}(S_k\cap F')$-integrable.






\end{proof}
\begin{corollary}\label{Ldom} The $\frak{sl}(S)$-module $L(f)$ is integrable if and only if
$f\in \mathbb C^S$ is dominant.\end{corollary}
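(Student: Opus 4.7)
The plan is to deduce the corollary directly from Proposition~\ref{Ldom-}. Note that the trivial partition $t=1$, $S_1=S$ is vacuously compatible with every linear order $\prec$ on $S$, and dominance of $f|_{S_1}=f$ is exactly dominance of $f$ in $\mathbb C^S$. Hence Proposition~\ref{Ldom-} specialized to this partition gives precisely the desired equivalence, provided one identifies integrability of $L(f)$ as an $\frak{sl}(S)$-module (in the sense of the paper) with the $\frak{sl}(S_1)$-integrability asserted in the proposition. These two notions coincide by definition, so the corollary is immediate.

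For clarity, I would separate the two implications in the write-up. For $(\Rightarrow)$, integrability of $L(f)$ restricts to integrability of the $\frak{sl}(\{i,j\})\cong\frak{sl}(2)$-submodule $\Ue(\frak{sl}(\{i,j\}))\cdot v$ for every pair $i\prec j$ in $S$; the standard $\frak{sl}(2)$-theory then forces $f(i)-f(j)\in\mathbb Z_{\ge 0}$, which is exactly $\prec$-dominance of $f$. For $(\Leftarrow)$, suppose $f$ is dominant, fix $m\in L(f)$ and a finite-dimensional subalgebra $\frak g\subset \frak{sl}(S)$. Writing $m=X\cdot v$ for $X\in\Ue(\frak{sl}(S))$ and using that $\frak g$ has a finite basis in the spanning set of $\frak{sl}(S)$, one chooses a finite $F\subset S$ large enough that simultaneously $X\in\Ue(\frak{sl}(F))$ and $\frak g\subset\frak{sl}(F)$. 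Then $m\in\widehat L(f|_F)$, and by Proposition~\ref{Ldom-} applied on the $\frak{sl}(F)$-side $\widehat L(f|_F)$ is $\frak{sl}(F)$-integrable, so $\dim(\Ue(\frak g)\cdot m)<\infty$.

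There is essentially no obstacle here; the only thing worth spelling out is the reduction $\frak g\subset\frak{sl}(F)$ and $m\in\widehat L(f|_F)$ for a common finite $F$, which is a trivial consequence of the explicit basis of $\frak{sl}(S)$ and of the fact that $L(f)$ is cyclically generated by $v$. The substantive content of the equivalence is entirely contained in Proposition~\ref{Ldom-}, so the corollary is a one-step specialization.
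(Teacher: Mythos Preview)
Your proposal is correct and is exactly the approach the paper takes: the paper gives no proof at all for this corollary, treating it as the immediate specialization of Proposition~\ref{Ldom-} to the trivial partition $t=1$, $S_1=S$, just as you do in your first paragraph. Your second paragraph simply unpacks the proof of Proposition~\ref{Ldom-} in this special case, which is fine though redundant; the only minor imprecision is that the $\frak{sl}(F)$-integrability of $\widehat L(f|_F)$ is established inside the \emph{proof} of Proposition~\ref{Ldom-} rather than in its statement.
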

\begin{corollary}\label{Ppint}Assume that $f$ is locally constant with respect to a compatible partition $S_1\sqcup...\sqcup S_n$ of $S$. Then $L(f)$ is an integrable $\frak{sl}(S_i)$-module for any $i\le n$.\end{corollary}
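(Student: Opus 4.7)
The plan is to reduce the statement directly to Proposition~\ref{Ldom-} by verifying its hypotheses for each block $S_i$ of the partition.

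First I would unpack the definitions. By assumption $f$ is locally constant with respect to the compatible partition $S_1\sqcup\ldots\sqcup S_n$, which by the definition in Section~\ref{SSsnot} means that $f$ is constant on each $S_i$. Consequently, for any pair $j_1,j_2\in S_i$ we have $f(j_1)-f(j_2)=0$, so $f(j_1)-f(j_2)\in\mathbb Z$ and $f(j_1)-f(j_2)\ge 0$. Thus the restriction $f|_{S_i}\in\mathbb C^{S_i}$ is integral and $\prec$-dominant (where the order on $S_i$ is the restriction of the order $\prec$ on $S$ to $S_i$).

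Once the dominance of $f|_{S_i}$ is in hand, I would simply invoke Proposition~\ref{Ldom-}, which asserts that for any compatible partition of $S$ the simple highest weight module $L(f)$ is $\frak{sl}(S_i)$-integrable precisely when $f|_{S_i}$ is dominant. Applying this for each $i\le n$ yields the claim.

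There is essentially no obstacle here: the entire content of the corollary is that ``constant implies dominant,'' a triviality once one checks the definitions. The substantive work was already carried out in Proposition~\ref{Ldom-}, whose proof reduced the infinite-rank integrability statement to the classical $\frak{sl}(2)$-fact together with Proposition~\ref{Pfinf}. Thus the proof of Corollary~\ref{Ppint} should be no more than a single sentence noting that a constant function is automatically integral and dominant, followed by a citation of Proposition~\ref{Ldom-}.
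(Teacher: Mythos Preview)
Your proposal is correct and matches the paper's approach exactly: the paper states Corollary~\ref{Ppint} without proof, as an immediate consequence of Proposition~\ref{Ldom-}, and your observation that a constant function on $S_i$ is trivially dominant is precisely the intended deduction.
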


\subsection{Ideals of $\Ue(\frak{sl}(\infty))$}
Let $I$ be an ideal of $\Ue(\frak{sl}(\infty))$. Under an ideal we always mean a two-sided ideal. Fix an exhaustion
\begin{equation}\frak{sl}(2)\hookrightarrow\frak{sl}(3)\hookrightarrow...\hookrightarrow\frak{sl}(n)\hookrightarrow\frak{sl}(n+1)\hookrightarrow...\label{Eex}\end{equation}of
$\frak{sl}(\infty)$. Then $I=\varinjlim\limits_{n\ge2}(I\cap\Ue(\frak{sl}(n)))$. Set $I_n:=I\cap\Ue(\frak{sl}(n))$. Let $\Var I_n\subset\frak{sl}(n)^*$ be the associated variety of~$I_n$. By identifying $\frak{sl}(n)$ and $\frak{sl}(n)^*$ via the Killing form we can assume that $\Var I_n\subset\frak{sl}(n)$.

For any positive integer $r$ we introduce the varieties
\begin{center}$\frak{sl}(n)^{\le r}:=\{x\in\frak{sl}(n)\mid
\exists\lambda\in\mathbb C$ such that $\APrk (x-\lambda)\le
r\}$,\end{center}where $\lambda$ is understood as a scalar $n\times n$-matrix. One can easily see that
$\frak{sl}(n)^{\le r}$ is an $SL(n)$-stable subvariety of $\frak{sl}(n)$.

The following theorem reproduces the claim of~\cite[Corollary~6.2~b)]{PP} for $\frak{sl}(\infty)$.
\begin{theorem}\label{Tuslrk}For any nonzero ideal $I\subset\Ue(\frak{sl}(\infty))$ such that  $\Var I_n\ne 0$ for some $n$, there exists a positive integer $r$ such that $\Var I_n=\frak{sl}(n)^{\le r}$ for any $n\ge2$.\end{theorem}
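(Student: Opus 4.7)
The strategy is to reduce Theorem~\ref{Tuslrk} to [PP, Corollary~6.2~b)]: the latter is a classification statement about compatible families of $SL(n)$-stable closed conic subvarieties of $\frak{sl}(n)$, so it suffices to recast the data of the ideal $I$ into exactly such a family and quote the cited corollary.

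First, I would observe that $I_n=I\cap\Ue(\frak{sl}(n))$ is a two-sided ideal of $\Ue(\frak{sl}(n))$, so $\Var I_n$ is automatically $SL(n)$-invariant and conic (associated varieties of filtered two-sided ideals are always homogeneous and $G$-stable). The filtration $\{I_n\}$ is compatible with the exhaustion in the sense that $I_n=I_{n+1}\cap\Ue(\frak{sl}(n))$, and this compatibility translates, via the Killing-form identifications $\frak{sl}(n)^*\cong\frak{sl}(n)$ and $\frak{sl}(n+1)^*\cong\frak{sl}(n+1)$, into a compatibility relation between $\Var I_n$ and $\Var I_{n+1}$ under the inclusion $\frak{sl}(n)\hookrightarrow\frak{sl}(n+1)$. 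The hypothesis that $\Var I_n\ne 0$ for some $n$ excludes the trivial case, and one checks (standard argument) that this nonvanishing then propagates across the exhaustion.

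With these observations, the family $\{\Var I_n\}_{n\ge2}$ satisfies the hypotheses of [PP, Corollary~6.2~b)]. The content of that corollary is that the only nontrivial compatible families of $SL(n)$-invariant closed conic subvarieties of $\frak{sl}(n)$ arising this way are the families $\{\frak{sl}(n)^{\le r}\}_{n\ge 2}$ for a single positive integer $r$. This yields the desired conclusion with a uniform $r$.

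The main obstacle is the classification result in \cite{PP} itself, which rests on Zhilinskii's classification of coherent local systems of finite-dimensional $\frak{sl}(n)$-modules and a considerable amount of representation-theoretic bookkeeping. In the present argument this classification is used as a black box, so the only real work is the (essentially routine) verification that the family $\{\Var I_n\}$ genuinely meets the compatibility hypothesis of [PP, Cor.~6.2~b)] in the form required there.
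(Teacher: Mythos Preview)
Your proposal is correct and follows the same approach as the paper, which gives no proof at all beyond remarking that the theorem ``reproduces the claim of \cite[Corollary~6.2~b)]{PP} for $\frak{sl}(\infty)$.'' The only minor point is that in \cite{PP} that corollary is already formulated directly for a nonzero ideal $I\subset\Ue(\frak{sl}(\infty))$, so your intermediate recasting in terms of an abstract compatible family of $SL(n)$-stable conic subvarieties is superfluous.
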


\subsection{Integrable ideals and coherent local systems}\label{SSzhi}
We say that an ideal $I\subset\Ue(\frak{sl}(\infty))$ is {\it integrable}, if $I$ is the annihilator of an integrable $\frak{sl}(\infty)$-module. Integrable ideals are closely connected with coherent local systems of modules which we define next.








Let $\Irr_n$ denote the set of isomorphism classes of simple finite-dimensional $\frak{sl}(n)$-modules.
\begin{definition}{\it A coherent local system of modules} (further shortened as {\it c.l.s.}) for $\frak{sl}(\infty)=\varinjlim\frak{sl}(n)$ is a collection of subsets \begin{center}$\{Q_n\}_{n\in\mathbb Z_{\ge2}}\subset\prod_{n\in\mathbb Z_{\ge2}}\Irr_n$\end{center} such that $Q_m=\APbre{Q_n}_m$ for any $n>m$, where $\APbre{Q_n}_m$ denotes the set of isomorphism classes of all simple $\frak{sl}(m)$-constituents of the $\frak{sl}(n)$-modules from $Q_n$.\end{definition}

A.~Zhilinskii~\cite{Zh2, Zh3} has classified c.l.s. for $\frak{sl}(\infty)$ and more generally for any
locally simple Lie algebra. Moreover, if $Q$ is a
c.l.s., then $$I(Q_m):=\cap_{z\in Q_m}\Ann_{\Ue(\frak{sl}(m))}z\subset\cap_{z\in
Q_n}\Ann_{\Ue(\frak{sl}(n))}z=:I(Q_n)$$ for any $n>m$. Therefore
$\cup_n(\cap_{z\in Q_n}\Ann_{\Ue(\frak{sl}(n))}z)=\cup_nI(Q_n)$ is an ideal of
$\Ue(\frak{sl}(\infty))$; we denote it by $I(Q)$. It follows from {~\cite[Lemma 1.1.2]{Zh2}} that $I(Q)$ is
integrable.

It turns out that Zhilinskii's classification of c.l.s. yields a
classification of integrable ideals of $\Ue(\frak{sl}(\infty))$. In this paper we present only the classification of c.l.s. For the classification of integrable ideals see~\cite[Theorem~7.9]{PP}.

A c.l.s. $Q$ is {\it irreducible} if $Q\ne Q'\cup Q''$ with
$Q'\not\subset Q''$ and $Q''\not\subset Q'$. The following proposition clarifies the role of the irreducible c.l.s.

\begin{proposition}\label{Pintm}a) If $Q$ is an irreducible c.l.s., then $I(Q)$ is the annihilator of a simple $\frak{sl}(\infty)$-module. In particular, $I(Q$) is primitive and hence prime.

b) If $I$ is an integrable prime ideal of $\Ue(\frak{sl}(\infty))$, then $I=I(Q)$ for an irreducible c.l.s. $Q$. \end{proposition}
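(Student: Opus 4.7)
The plan for part~(a) is to produce, starting from an irreducible c.l.s.\ $Q$, a simple integrable $\mathfrak{sl}(\infty)$-module $L$ with $\Ann_{\Ue(\mathfrak{sl}(\infty))} L=I(Q)$; the plan for part~(b) is to translate failure of irreducibility of $Q$ into failure of primality of $I(Q)$ by decomposing the annihilator as an intersection.

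For part~(a) I would attach to any integrable $\mathfrak{sl}(\infty)$-module $M$ its \emph{spectrum} $Q^M=\{Q^M_n\}$, where $Q^M_n$ consists of the isomorphism classes of simple $\mathfrak{sl}(n)$-constituents of $M$ regarded as an $\mathfrak{sl}(n)$-module; since $M|_{\mathfrak{sl}(n)}$ is completely reducible by Weyl's theorem, one has
$$\Ann_{\Ue(\mathfrak{sl}(\infty))}M=\varinjlim_n\,\bigcap_{V\in Q^M_n}\Ann_{\Ue(\mathfrak{sl}(n))}V=I(Q^M).$$
Now $I(Q)$ is integrable by~\cite[Lemma~1.1.2]{Zh2}, so some integrable $M$ satisfies $\Ann M=I(Q)$; hence $Q^M\subset Q$, and in fact $Q^M=Q$ by the injectivity of the correspondence $Q\mapsto I(Q)$ built into the classification~\cite[Theorem~7.9]{PP}. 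The key remaining step is to locate inside $M$ a \emph{simple} integrable $\mathfrak{sl}(\infty)$-subquotient $L$ whose own spectrum $Q^L$ equals $Q$. Each simple integrable subquotient $L$ contributes a c.l.s.\ $Q^L\subset Q$, and if the union $\bigcup_L Q^L$ over all simple subquotients covers $Q^M=Q$, then irreducibility of $Q$ forces $Q^L=Q$ for some single $L$, giving $\Ann L=I(Q^L)=I(Q)$ and hence primitivity (whence also primality).

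For part~(b), the classification~\cite[Theorem~7.9]{PP} lets us write any integrable ideal as $I=I(Q)$ for some c.l.s.\ $Q$. Suppose $Q$ is not irreducible, so $Q=Q'\cup Q''$ with $Q'\not\subset Q''$ and $Q''\not\subset Q'$. At each finite level $Q_n=Q_n'\cup Q_n''$, whence
$$\bigcap_{z\in Q_n}\Ann_{\Ue(\mathfrak{sl}(n))}z=\Bigl(\bigcap_{z\in Q_n'}\Ann z\Bigr)\cap\Bigl(\bigcap_{z\in Q_n''}\Ann z\Bigr),$$
and taking the direct limit gives $I(Q)=I(Q')\cap I(Q'')$. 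The injectivity of $Q\mapsto I(Q)$ from~\cite[Theorem~7.9]{PP} implies $I(Q')\supsetneq I(Q)$ and $I(Q'')\supsetneq I(Q)$, so $I(Q')\cdot I(Q'')\subset I(Q')\cap I(Q'')=I(Q)$ contradicts the primality of $I(Q)$.

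The principal obstacle is the exhaustion step in part~(a): verifying that the union of the spectra $Q^L$ of the simple integrable subquotients of $M$ actually covers all of $Q^M=Q$, rather than only a proper sub-c.l.s. I would approach this via the socle filtration of $M$, combined with Zhilinskii's explicit classification of irreducible c.l.s.\ and their concrete realizations by simple integrable $\mathfrak{sl}(\infty)$-modules (e.g.\ appropriate tensor modules built from the natural and conatural representations), so as to guarantee that every element of $Q_n$ does arise as an $\mathfrak{sl}(n)$-constituent of some simple integrable subquotient of $M$.
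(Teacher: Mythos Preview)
Your argument for part~(b) is a correct unpacking of what the paper obtains by citing \cite[Theorem~7.9]{PP}; the paper's own proof of~(b) is just that one-line reference, so here you are doing strictly more work, but correctly.

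For part~(a), however, the paper's proof is again a bare citation of \cite[Lemma~1.1.2]{Zh2}, and that lemma does more than merely say $I(Q)$ is integrable: for an \emph{irreducible} c.l.s.\ $Q$ it directly produces a simple integrable $\frak{sl}(\infty)$-module $L$ with $\cls(L)=Q$, essentially as a direct limit of finite-dimensional simples picked from the levels $Q_n$, with irreducibility of $Q$ guaranteeing a coherent choice. Your detour---start from an arbitrary integrable $M$ with $\Ann M=I(Q)$ and search for a simple subquotient $L$ with $Q^L=Q$---runs into the very obstacle you flag, and it is a genuine one. An integrable $\frak{sl}(\infty)$-module need not have finite length or even nonzero socle, and in general the intersection $\bigcap_L\Ann L$ over all simple subquotients $L$ of $M$ can strictly contain $\Ann M$; so there is no a~priori reason why $\bigcup_L Q^L$ should cover $Q$. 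Your proposed remedy, invoking Zhilinskii's explicit realization of irreducible c.l.s.\ by concrete simple integrable modules, is circular: that realization is exactly the content of \cite[Lemma~1.1.2]{Zh2}, and once you have it in hand the rest of your argument for~(a) is superfluous. The clean fix is simply to cite Zhilinskii's construction directly, as the paper does.
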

\begin{proof}Part a) follows directly from{~\cite[Lemma 1.1.2]{Zh2}}. Part b) is a consequence of~\cite[Theorem 7.9]{PP}.\end{proof}


Fix $n$. The set $\Irr_n$ is parametrized by the lattice of integral dominant weights of $\frak{sl}(n)$. Let $z_1, z_2$ be
isomorphism classes of simple $\frak{sl}(n)$-modules with respective
highest weights $\lambda_1, \lambda_2$. We denote by $z_1z_2$ the
isomorphism class of the simple module with highest weight
$\lambda_1+\lambda_2$. If $S_1, S_2\subset\Irr_n$ we
set\begin{center}$S_1S_2:=\{z\in\Irr_n\mid z=z_1z_2$ for some
$z_1\in S_1$ and $z_2\in S_2\}$.\end{center}If $Q'$, $Q''$ are
c.l.s., we denote by $Q'Q''$ the smallest c.l.s. such that
$(Q')_i(Q'')_i\subset(Q'Q'')_i$.
By~\cite{Zh2}\begin{center}$(Q')_n(Q'')_n=(Q'Q'')_n$.\end{center}
\subsubsection{Zhilinskii's classification of c.l.s.}In this
subsection we reproduce Zhilinskii's classification of
c.l.s. for $\frak{sl}(\infty)$~\cite{Zh1}.
Any integrable $\frak{sl}(\infty)$-module $M$ determines a c.l.s.
$Q:=\{Q_n\}_{n\in\mathbb Z_{>0}}$, where
\begin{center}$Q_n:=\{z\in\Irr_n\mid$Hom$(z, M)\ne 0$\}.\end{center}
We set $\cls(M):=Q$. Moreover, $\Ann_{\Ue(\frak{sl}(\infty))} M=I(\cls(M))$. We construct an irreducible c.l.s. as the c.l.s. of some explicitly given integrable $\frak{sl}(\infty)$-module.

Let $V(\infty)$ denote a vector space with basis $\{e_i\}_{i\in\mathbb Z_{>0}}$. We endow $V(\infty)$ with an action of $\frak{sl}(\infty)$ by putting$$e_{ij}\cdot e_k=e_i\delta_{jk},\hspace{10pt}(e_{ii}-e_{jj})\cdot e_k=e_i\delta_{ik}-e_j\delta_{jk}~\mbox{~for~}i, j, k\in\mathbb Z_{>0}.$$In this way $V(\infty)$ becomes a simple integrable $\frak{sl}(\infty)$-module, and we call it the {\it natural $\frak{sl}(\infty)$-module}. By $V(\infty)_*$ we denote the restricted dual to $V(\infty)$, i.e., the $\frak{sl}(\infty)$-submodule of $V(\infty)^*$ spanned by
the vectors~$\{e_i^*\}_{~i\in\mathbb Z_{>0}}$ which satisfy
$$e_i^*(e_j)=\delta_{ij}.$$

Any irreducible c.l.s. $Q$ for $\frak{sl}(\infty)$ is a product of
the following {\it basic} c.l.s.:
\begin{center}$\mathcal E:=\cls(\Lambda^\cdot V(\infty)),\hspace{10pt}\mathcal L_p:=\cls(\Lambda^p V(\infty)),\hspace{10pt} \mathcal
L_p^\infty:=\cls(${\bf S}$^\cdot(V(\infty)\otimes\mathbb C^p)),$\\$\mathcal R_q:=\cls(\Lambda^qV(\infty)_*),\hspace{10pt} \mathcal R_q^\infty:=\cls(${\bf
S}$^\cdot(V(\infty)_*\otimes\mathbb C^q)),
\hspace{10pt}\mathcal E^\infty$ (all modules),\end{center} where $p,
q\in\mathbb Z_{\ge0}$. More precisely, any irreducible c.l.s. is expressed uniquely
as\begin{equation}(\mathcal L_v^\infty\mathcal L_{v+1}^{x_{v+1}}\mathcal
L_{v+2}^{x_{v+2}}...\mathcal L_n^{x_n})~~\mathcal E^m~~(\mathcal
R_w^\infty\mathcal R_{w+1}^{z_{w+1}}\mathcal
R_{w+2}^{z_{w+2}}...\mathcal R_l^{z_l}),\label{Ecfcls}\end{equation}where
\begin{center}$m, n, l, v, w, x_i, z_j\in\mathbb Z_{\ge0}$.\end{center}Here, if $v=0$ (respectively $w=0$), then
$\mathcal L_v^\infty$ (respectively $\mathcal R_w^\infty$) is
assumed to be the identity (the c.l.s. consisting of the isomorphism
class of the trivial 1-dimensional module at all levels).
In~\cite{Zh2} the above formulas are called the {\it unique
factorization property}.


\subsection{C.l.s. of simple integrable highest weight modules}We start with the following definition.
\begin{definition}A c.l.s. $Q$ is {\it of finite type} if $Q_n$ is finite for any $n$.\end{definition}
One can easily check that the irreducible c.l.s. of finite type are precisely the c.l.s. of the form~(\ref{Ecfcls}) with $v=w=0$.

Let $f\in\mathbb C^{\mathbb Z_{>0}}$ be an integral function. We assume that a linear order on $\mathbb Z_{>0}$ is fixed and therefore we use the notations of Section~\ref{SSmi} for $S=\mathbb Z_{>0}$.

Proposition~\ref{Lftfmv} below implies that if $|f|=\infty$ then $\cls(L(f))=\mathcal E^\infty$. One can
check that the proof of Proposition~\ref{Lftfmv} is independent from
the current discussion. If $|f|<\infty$, there are two values $a, b\in\mathbb C$ of $f$ such that $a-b\in\mathbb Z_{\ge0}$ is maximal. We set $s:=a-b$. For any nonnegative integer $c\le s$ we denote by $d_{c}$ the multiplicity of the value $b+c$ of $f$ (note that $d_{c}\in\mathbb Z_{\ge0}\cup+\infty$). Let $p$ be the smallest integer such that
$d_{p}=+\infty$, and $q$ be the largest integer such that
$d_{q}=+\infty$ (if $d_{c}$ is finite for all $0\le c\le b-a$, we put $p=q=0$).
\begin{proposition}\label{Lexpl}\label{CexY}

a) For a $\prec$-dominant function $f\in\mathbb C^{\mathbb Z_{>0}}$ with $|f|<\infty$, we have \begin{equation}\cls(L(f))=\mathcal L_{d_0}\mathcal L_{d_0+d_1}...\mathcal L_{d_0+d_1+...+d_{p-1}}\mathcal E^{(q-p)}\mathcal R_{d_s}\mathcal R_{d_s+d_{s-1}}...\mathcal R_{d_s+...+d_{q+1}}.\label{Eclsft}\end{equation}

b) A c.l.s. of the form~(\ref{Eclsft}) is of finite type.

c) Let $\frak b^0$ be a fixed ideal Borel subalgebra of $\frak{sl}(\infty)$. Then any irreducible c.l.s. of finite type is equal to $\cls(L_{\frak b^0}(f^0))$ for an appropriate $\frak b^0$-dominant function $f^0\in\mathbb C^{\mathbb Z_{>0}}$.\end{proposition}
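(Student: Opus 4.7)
The plan is to prove (a) first by explicit identification of $L(f)$; parts (b) and (c) then follow relatively directly.

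For part (a), I begin with Corollary~\ref{Ldom}: since $f$ is $\prec$-dominant and integral, $L(f)$ is integrable, so $\cls(L(f))$ is well defined and, being the c.l.s.\ of a simple integrable module, is irreducible in the sense of Zhilinskii. Consider the level-set partition $A_c := f^{-1}(b+c)$ for $0 \le c \le s$; by $\prec$-dominance the listing $A_s, A_{s-1}, \ldots, A_0$ is a partition of $\mathbb Z_{>0}$ compatible with $\prec$, with infinite-multiplicity blocks at heights $c = p$ and $c = q$ (and possibly in between), and finite-multiplicity blocks for $c < p$ or $c > q$. The strategy is to construct an integrable candidate module $M$ as a tensor product of basic $\frak{sl}(\infty)$-modules: one exterior-power factor for each finite block (giving an $\mathcal L$- or $\mathcal R$-type basic c.l.s., depending on whether the block lies below or above the infinite region) and one factor of $\mathcal E$-type for each of the $q-p$ gaps between consecutive infinite heights. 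Arrange the factors so that $M$ carries a $\frak b$-highest weight vector of weight $\lambda_f$, given as the tensor product of the natural highest-weight vectors of the factors. The $\frak{sl}(\infty)$-submodule generated by this vector is a simple integrable highest-weight module of weight $\lambda_f$, hence isomorphic to $L(f)$; the c.l.s.\ formula follows by Zhilinskii's multiplicativity $(Q'Q'')_n = (Q')_n(Q'')_n$ applied to the basic factors.

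Part (b) is then an immediate inspection: each of $\mathcal L_k$, $\mathcal R_l$, $\mathcal E$ is of finite type (at each level $n$ only finitely many isomorphism classes of simple $\frak{sl}(n)$-constituents appear), and a product of finitely many finite-type c.l.s.\ is again of finite type by the multiplicativity formula applied level-by-level.

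For part (c), given an irreducible finite-type c.l.s.\ $Q$, Zhilinskii's unique factorization (formula~(\ref{Ecfcls}) with $v = w = 0$) expresses $Q = \mathcal L_1^{x_1}\cdots \mathcal L_n^{x_n}\cdot \mathcal E^m\cdot \mathcal R_1^{z_1}\cdots \mathcal R_l^{z_l}$. Choose multiplicities $d_0, \ldots, d_s$ and indices $p, q$ so that the partial sums $d_0, d_0+d_1, \ldots, d_0+\cdots+d_{p-1}$ reproduce the $\mathcal L$-indices in the formula of part (a), the partial sums $d_s, d_s+d_{s-1}, \ldots, d_s+\cdots+d_{q+1}$ reproduce the $\mathcal R$-indices, and $q-p = m$. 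Distribute these multiplicities on $\mathbb Z_{>0}$ via the ideal partition $S_1 \sqcup S_2 \sqcup S_3$ of $\frak b^0$: the $d_q$-block becomes the eventual (constant) value of $f^0|_{S_1}$, the $d_p$-block becomes the eventual value of $f^0|_{S_3}$, the finite top blocks occupy the initial $\prec$-positions of $S_1$, and the finite bottom blocks occupy the terminal $\prec$-positions of $S_3$. The intermediate multiplicities $d_c$ for $p < c < q$ may be set to zero (since $\mathcal E^{q-p}$ depends only on $q-p$), in which case $S_2$ may be taken empty. By construction $f^0$ is $\prec$-dominant, and applying part (a) yields $\cls(L_{\frak b^0}(f^0)) = Q$.

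The main obstacle is the tensor-module identification in part (a). Proposition~\ref{Lpind} cannot be applied directly because the values of $f$ differ by integers, violating its non-integer hypothesis, so the tensor decomposition must be established by explicit construction of the candidate module combined with a highest-weight comparison, using the integrability and simplicity of both $L(f)$ and the submodule generated by the chosen highest-weight vector (with Proposition~\ref{Pfinf} available to reduce to finite-dimensional constituent computations if a direct argument fails).
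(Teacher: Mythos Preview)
For parts (b) and (c) your argument is essentially the paper's and is correct.

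For part (a) there is a genuine gap in the step ``the c.l.s.\ formula follows by Zhilinskii's multiplicativity $(Q'Q'')_n=(Q')_n(Q'')_n$ applied to the basic factors.'' That multiplicativity statement concerns the \emph{Cartan product} of two coherent local systems (built from addition of highest weights, the operation $z_1z_2$); it says nothing about the c.l.s.\ of a tensor product of modules, nor about the c.l.s.\ of the Cartan component sitting inside such a tensor product. Your embedding $L(f)\hookrightarrow M=M_1\otimes\cdots\otimes M_k$ yields only $\cls(L(f))\subset\cls(M)$, and at each level $n$ one has $(\cls(M))_n=(\cls(M_1))_n\otimes\cdots\otimes(\cls(M_k))_n$ in the sense of the $\otimes$-operation of Section~\ref{Spr1tb} (all simple constituents of tensor products), which strictly contains the Cartan product $(\cls(M_1))_n\cdots(\cls(M_k))_n$ in general. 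So neither inclusion in the desired equality $\cls(L(f))=\prod_i\cls(M_i)$ follows from the tensor embedding plus multiplicativity.

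The paper's proof of (a) sidesteps this entirely: it uses the direct-limit description $L(f)=\varinjlim_{n} L(f|_{\{1,\dots,n\}})$, so that $\cls(L(f))$ is determined by the sequence of finite-dimensional highest weights $\lambda_n:=\lambda_{f|_{\{1,\dots,n\}}}$, and then cites Zhilinskii's explicit algorithm (which takes exactly such a stabilizing sequence of dominant weights as input and outputs a product of basic c.l.s.). Your hedge at the end---falling back on Proposition~\ref{Pfinf} to ``reduce to finite-dimensional constituent computations''---is precisely this direct-limit route, and once you take it the ambient tensor module $M$ plays no role: what pins down $\cls(L(f))$ is the branching of the single simple module $L(\lambda_m)$ from $\frak{sl}(m)$ to $\frak{sl}(n)$ as $m$ grows, which is Zhilinskii's computation, not anything supplied by the embedding into $M$.
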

\begin{proof}First we prove part a). Recall that $L(f)=\varinjlim\limits_{n\ge 2} L(f|_{\{1,..., n\}})$. Thus the coherent local system $\cls(L(f))$ is determined by the highest weights $\lambda_n$ of the finite-dimensional $\frak{sl}(n)$-modules $L(f|_{\{1,..., n\}})$. Such local systems have been considered by Zhilinskii~\cite{Zh1} and he provides an explicit algorithm which assigns to $\{\lambda_i\}$ a c.l.s. of the form~(\ref{Eclsft}). This implies a).

b) It is clear that any c.l.s. of the form~(\ref{Eclsft}) is a c.l.s. of finite type.

c) The ideal subalgebra $\frak b^0$ defines a partition $S_1\sqcup S_2\sqcup S_3$ of $\mathbb Z_{>0}$ with fixed order preserving bijections $\mathbb Z_{>0}\to S_1,~\mathbb Z_{<0}\to S_3$. We denote the image of $k\in\mathbb Z_{>0}$ in $S_1$ by $k^1$, and the image of $-k\in\mathbb Z_{<0}$ in $S_3$ by $k^3$.

It is clear that any c.l.s. $Q$ of the form~(\ref{Ecfcls}) with $v=w=0$ can be presented in the form~(\ref{Eclsft}) for suitable integers $p, q, d_0, d_1,..., d_{p-1}, d_s, d_{s-1},..., d_{q+1}$. Moreover, $\cls(L_{\frak b^0}(f))=Q$ for the $\frak b^0$-dominant function $f\in\mathbb C^{\mathbb Z_{>0}}$ defined as follows: $$f(i):=\begin{cases}p&\mbox{~if~}i\in S_2\mbox{~or~}i=k^1\mbox{~for~}k>d_0+...+d_{p-1};\\
q &\mbox{~if~}i=k^3\mbox{~for~}k>d_s+...+d_{q+1};\\
j &\mbox{~if~}i=k^1\mbox{~and~}d_0+...+d_{j-1}<k\le d_0+...+d_j\mbox{~for~}j<p;\\
j &\mbox{~if~}i=k^3\mbox{~and~}d_s+...+d_{j+1}<k\le d_s+...+d_j\mbox{~for~}j>q.\\
\end{cases}$$
\end{proof}

\section{Statements of Main Results}\label{Sstate}
\begin{theorem}\label{1T} Let $\frak b\supset\frak h$ be a splitting Borel subalgebra of $\frak{sl}(\infty)$, and $f\in\mathbb C^{\mathbb Z_{>0}}$. Then $\Ann_{\Ue(\frak{sl}(\infty))} L_{\frak b}(f)\ne 0$ if and only if $f$ is almost integral and locally constant with respect to the linear order defined by $\frak b$.\end{theorem}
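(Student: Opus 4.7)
I would prove the two directions of the biconditional separately.

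For the $(\Leftarrow)$ direction, suppose $f$ is almost integral and locally constant with respect to $\prec$, with a compatible partition $S = S_1 \sqcup \cdots \sqcup S_t$ on which $f$ is constant with values $c_1, \dots, c_t$. The almost-integral hypothesis forces $c_i - c_j \in \mathbb Z$ for any two infinite pieces $S_i, S_j$. Group consecutive pieces into maximal $\prec$-convex blocks $B_1, \dots, B_s$ in which the constants lie in a single coset of $\mathbb Z$; values on distinct blocks then differ non-integrally, so Proposition~\ref{Lpind} yields a parabolic decomposition $L(f) \cong \Ue(\frak{sl}(\infty)) \otimes_{\Ue(\frak p)} (L(f|_{B_1}) \otimes \cdots \otimes L(f|_{B_s}))$. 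This reduces matters to the block factors: each $L(f|_{B_j})$ has integral locally-constant highest weight, and I would build an explicit integrable $\frak{sl}(B_j)$-module $M_j$ whose annihilator in $\Ue(\frak{sl}(B_j))$ is contained in $\Ann L(f|_{B_j})$, using Proposition~\ref{Lexpl}(c) and the classification~(\ref{Ecfcls}) of irreducible c.l.s.\ to match the combinatorial data $(c_i, |S_i|)$ for $S_i \subseteq B_j$ to an appropriate tensor product of symmetric and exterior powers of $V(\infty)$ and $V(\infty)_*$. These block annihilators assemble into a nonzero ideal annihilating the parabolically induced $L(f)$.

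For the $(\Rightarrow)$ direction, I would argue the contrapositive. Assume $I := \Ann L(f) \ne 0$ and fix an exhaustion $\frak{sl}(F_2) \subset \frak{sl}(F_3) \subset \cdots$ of $\frak{sl}(\infty)$ by finite subalgebras with each $\frak b \cap \frak{sl}(F_n)$ a Borel subalgebra of $\frak{sl}(F_n)$. Theorem~\ref{Tuslrk} provides a fixed $r$ such that $\Var(I \cap \Ue(\frak{sl}(F_n))) = \frak{sl}(F_n)^{\le r}$ for every $n$. Since $I \cap \Ue(\frak{sl}(F_n))$ annihilates the $\frak{sl}(F_n)$-submodule $\widehat L(f|_{F_n}) \cong \Ima \psi_{F_n}$ (Proposition~\ref{Pfinf}), Joseph's classification of primitive ideals of $\Ue(\frak{sl}(n))$ by Robinson--Schensted combinatorics, specialized to the rank-type bound $\Var \le r$, forces the highest weight $\lambda_{f|_{F_n}}$ to be of a tightly restricted form: up to a global shift, the values $\{f(i) \mid i \in F_n\}$ must cluster into at most $r + 1$ integer-equivalent level sets, each of which is $\prec$-consecutive in $F_n$, outside a uniformly bounded ``exceptional'' set of at most $r$ indices. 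Sending $n \to \infty$ and using that these descriptions are stable across $F_n \subset F_{n+1}$, one extracts a finite compatible partition of $\mathbb Z_{>0}$ together with a finite defect set, establishing the almost-integrality and local constancy of $f$.

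The main obstacle will be the $(\Rightarrow)$ direction: translating the geometric uniform-in-$n$ condition $\Var I_n = \frak{sl}(F_n)^{\le r}$ into sharp combinatorial restrictions on the finite-rank highest weights $f|_{F_n}$ that survive the direct limit. This requires a careful Joseph-theoretic analysis of primitive ideals of $\Ue(\frak{sl}(n))$ with bounded-corank associated variety, and a stability argument verifying that the resulting constraints on highest weights are compatible with the chain of inclusions $F_n \subset F_{n+1}$ in a way that assembles into global conditions on $f$.
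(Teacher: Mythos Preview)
Both directions of your proposal have genuine gaps.

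For $(\Rightarrow)$, you correctly invoke Theorem~\ref{Tuslrk} to obtain a uniform bound $\Var I_n = \frak{sl}(n)^{\le r}$ and propose to extract the structure of $f$ from Joseph's Robinson--Schensted analysis alone. But the rank bound by itself cannot force local constancy. Take $\frak b$ given by the standard order on $\mathbb Z_{>0}$ and $f(i) = -i$: then $f$ is integral and dominant, and for every finite $F$ the sequence $(f|_F)^+$ is strictly decreasing, so by Lemma~\ref{Lfcrk} one has $\APrk p(f|_F) = 0$ for all $F$. Yet $|f| = \infty$, $f$ is not locally constant, and indeed $\Ann L(f) = 0$. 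Thus no argument that passes only through the bound $\APrk p(f|_F)\le r$ can yield the conclusion. The paper closes this gap with an independent step, Proposition~\ref{Lftfmv}: if $I(f)\ne 0$ then $|f|<\infty$, proved by showing that infinitely many distinct values of $f$ make the set $\{\lambda_{f_\phi}\}_{\phi\in\APinj(n)}$ Zariski-dense in $\mathbb C^n$, forcing $\ZVar(I_n)=\frak h_n^*$ and hence $I_n\cap\ZUe(\frak{sl}(n))=0$, so $I_n=0$. Only with $|f|<\infty$ in hand does the rank bound, via Lemma~\ref{Lcex} and an induction on $|f|$, deliver local constancy (Proposition~\ref{Lflc}). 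Your sketch omits this central-character density argument entirely.

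For $(\Leftarrow)$, your parabolic reduction via Proposition~\ref{Lpind} to blocks $B_j$ on which the values of $f$ lie in a single coset of $\mathbb Z$ is legitimate, and does reduce the problem to a single integral, locally constant $f$. But the remaining task---producing a nonzero ideal annihilating $L(f|_{B_j})$ when $f|_{B_j}$ is integral and locally constant but \emph{not dominant}---is precisely the crux, and Proposition~\ref{Lexpl}(c) does not help: it only realizes c.l.s.\ of finite type by \emph{dominant} functions. The paper's device is the ``generic value'' trick of Lemma~\ref{Lre}: one replaces a single problematic value $f(s)$ by a transcendental indeterminate $z$, works over $\overline{\mathbb C(z)}$ where the resulting module is parabolically induced from a strictly shorter weight, and then specializes $z\mapsto f(s)$, paying the price of tensoring the controlling c.l.s.\ with $\mathcal L_1^\infty$. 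Iterating (Lemma~\ref{Lailf-f}) peels off all values responsible for non-dominance or lying in finite pieces $S_i$, and yields the explicit containment $I(\mathcal L_{\nint(f)+\gamma}^\infty\mathcal E^{\wid(f)})\subset I(f)$ of Proposition~\ref{Pailf}. Without this specialization argument or an equivalent, your step ``match the combinatorial data $(c_i,|S_i|)$ to an appropriate tensor product'' is a restatement of what must be proved, not a proof.
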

\begin{theorem}\label{2T} The following conditions on a nonzero ideal $I$ of $\Ue(\frak{sl}(\infty))$ are equivalent:

--- $I=\Ann_{\Ue(\frak{sl}(\infty))} L_{\frak b}(f)$ for some splitting Borel subalgebra $\frak b\supset\frak h$ and some function $f\in\mathbb C^{\mathbb Z_{>0}}$;

--- $I$ is a prime integrable ideal of $\Ue(\frak{sl}(\infty))$;

--- $I=\Ann_{\Ue(\frak{sl}(\infty))} L_{\frak b^0}(f^0)$ for some $f^0\in\mathbb C^{\mathbb Z_{>0}}$, where $\frak b^0$ is any fixed ideal Borel subalgebra.
\end{theorem}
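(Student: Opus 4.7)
The plan is to prove the cycle of implications (iii) $\Rightarrow$ (i) $\Rightarrow$ (ii) $\Rightarrow$ (iii). The implication (iii) $\Rightarrow$ (i) is immediate, since an ideal Borel subalgebra $\frak b^0$ is a particular splitting Borel subalgebra containing $\frak h$.

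For (i) $\Rightarrow$ (ii), primality comes for free: the annihilator of any simple module is primitive, hence prime. For integrability, the starting point is Theorem~\ref{1T}, which forces $f$ to be almost integral and locally constant with respect to the order $\prec$ defined by $\frak b$. I would combine the local-constancy partition $S_1\sqcup\cdots\sqcup S_t$ with the finite exceptional set coming from almost integrality to construct a refined compatible partition $T_1\sqcup\cdots\sqcup T_m$ of $\mathbb Z_{>0}$ whose adjacent blocks lie in distinct cosets of $\mathbb Z$ in $\mathbb C$. By Proposition~\ref{Lpind}, $L_{\frak b}(f)$ is then parabolically induced from $L(f|_{T_1})\otimes\cdots\otimes L(f|_{T_m})$; within each $T_j$ the function $f|_{T_j}$ is integral and locally constant, and the plan is to replace it by a $\prec$-dominant rearrangement on $T_j$ without altering the annihilator. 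After applying Corollary~\ref{Ldom} to each factor, the resulting induced $\frak{sl}(\infty)$-module is integrable, realizing $I$ as an integrable ideal.

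For (ii) $\Rightarrow$ (iii), let $I$ be a nonzero prime integrable ideal. By Proposition~\ref{Pintm}(b) we may write $I=I(Q)$ for an irreducible c.l.s. $Q$ of the Zhilinskii canonical form~(\ref{Ecfcls}). When $Q$ is of finite type, Proposition~\ref{Lexpl}(c) supplies a $\frak b^0$-dominant integral $f^0$ with $\cls(L_{\frak b^0}(f^0))=Q$, and the identity $\Ann M=I(\cls(M))$ for integrable $M$ yields $\Ann L_{\frak b^0}(f^0)=I$. When $Q$ carries factors $\mathcal L_v^\infty$ or $\mathcal R_w^\infty$, I would extend the construction of Proposition~\ref{Lexpl}(c) by assigning $f^0$ additional non-integer values in distinct cosets of $\mathbb Z$, placed at the left end of $S_1$ and the right end of $S_3$ in the ideal partition. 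Then Proposition~\ref{Lpind} decomposes $L_{\frak b^0}(f^0)$ as a parabolic induction from a tensor product of simple modules whose c.l.s. are of finite type, and the multiplicativity $(Q')_n(Q'')_n=(Q'Q'')_n$ recorded in Section~\ref{SSzhi} matches $\Ann L_{\frak b^0}(f^0)$ with $I(Q)$.

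The hardest step is the annihilator-preserving rearrangement of $f|_{T_j}$ required in (i) $\Rightarrow$ (ii), together with its dual counterpart in the non-finite-type case of (ii) $\Rightarrow$ (iii), namely the compatibility of the annihilator with the parabolic induction appearing in Proposition~\ref{Lpind}. Both reduce to the same kind of reconciliation between parabolic induction and two-sided ideals, and both should follow from a finite-level analysis using the Duflo--Joseph--Kazhdan--Lusztig combinatorics of primitive ideals in $\Ue(\frak{sl}(n))$, combined with the exhaustion argument developed in the proof of Theorem~\ref{1T} in Sections~\ref{Spr1ta} and~\ref{Spr1tb}.
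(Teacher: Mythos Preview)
Your treatment of (iii) $\Rightarrow$ (i) and of (ii) $\Rightarrow$ (iii) matches the paper's. In particular, for (ii) $\Rightarrow$ (iii) the paper does exactly what you describe: it handles the finite-type part via Proposition~\ref{Lexpl}(c), then adjoins non-integral values at the extreme positions of the ideal Borel's partition and invokes Proposition~\ref{Lpind} to produce the extra $\mathcal L_v^\infty$ and $\mathcal R_w^\infty$ factors (this is Lemma~\ref{Lclspar}).

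The divergence is in (i) $\Rightarrow$ (ii), and your plan there has a genuine gap. The ``annihilator-preserving rearrangement of $f|_{T_j}$ to a $\prec$-dominant function'' is not a small technicality: already for $\frak{sl}(n)$, replacing an integral weight by its dominant Weyl chamber representative changes the primitive ideal (e.g.\ an antidominant Verma versus a finite-dimensional module), so there is no general principle of the kind you are invoking. You would need a specific argument in the direct-limit setting, and nothing in Sections~\ref{Spr1ta}--\ref{Spr1tb} supplies one. Relatedly, your refined partition with adjacent blocks in distinct $\mathbb Z$-cosets need not exist: when $f$ is genuinely almost integral but not integral, the finitely many exceptional values may sit \emph{inside} an infinite constant block, so Proposition~\ref{Lpind} does not apply as you want.

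The paper bypasses all of this with a much shorter trick (Proposition~\ref{Ppsisb} and Corollary~\ref{Psisb}). Since $I(f)\ne 0$, Theorem~\ref{1T} gives local constancy, hence at least one infinite block $S_i$ on which $f$ is constant; by Corollary~\ref{Ppint} the module $L_{\frak b}(f)$ is $\frak{sl}(S_i)$-integrable. The key observation is that for \emph{any} infinite $S\subset\mathbb Z_{>0}$ and any ideal $I\subset\Ue(\frak{sl}(\infty))$, the natural isomorphism $\Ue(\frak{sl}(\infty))\cong\Ue(\frak{sl}(S))$ carries $I$ onto $I\cap\Ue(\frak{sl}(S))$; this follows from $\Adj\frak{sl}(\infty)$-conjugacy of the finite pieces (Lemma~\ref{Lconj}). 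Thus integrability of $\Ann_{\Ue(\frak{sl}(S_i))}L_{\frak b}(f)$ transfers directly to integrability of $\Ann_{\Ue(\frak{sl}(\infty))}L_{\frak b}(f)$, with no rearrangement and no parabolic induction needed.
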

\begin{proposition}\label{Pp} If $\frak b$ is a nonideal Borel subalgebra then there exists a prime integrable ideal $I$ which does not arise as the annihilator of a simple $\frak b$-highest weight $\frak{sl}(\infty)$-module.\end{proposition}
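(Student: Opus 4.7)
The plan is a case analysis based on how the ideal condition fails for $\frak b$. Let $\prec$ denote the linear order on $\mathbb Z_{>0}$ associated to $\frak b$; non-ideality says that no partition $\mathbb Z_{>0}=S_1\sqcup S_2\sqcup S_3$ compatible with $\prec$ has $S_1$ of order type $\omega$ and $S_3$ of order type $\omega^{*}$. Hence at least one of (A) ``no initial segment of $\prec$ is of order type $\omega$'' or (B) ``no final segment of $\prec$ is of order type $\omega^{*}$'' holds. The duality $V(\infty)\leftrightarrow V(\infty)_{*}$, exchanging $\mathcal L_k$ with $\mathcal R_k$ at the c.l.s.\ level, interchanges the two cases, so I focus on (B).

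Let $n(\prec)\in\mathbb Z_{\ge 0}$ be the cardinality of the largest finite final segment of $\prec$; under (B) this is finite, since otherwise the tower of immediate predecessors of the maximum would itself be an $\omega^{*}$-final segment. I take
\[
I:=I(\mathcal R_{n(\prec)+1})=\Ann_{\Ue(\frak{sl}(\infty))}\Lambda^{n(\prec)+1}V(\infty)_{*},
\]
a prime integrable ideal by Propositions~\ref{Pintm}(a) and~\ref{Lexpl}(b), and I aim to show that $I\ne\Ann_{\Ue(\frak{sl}(\infty))}L_\frak{b}(f)$ for every $f\in\mathbb C^{\mathbb Z_{>0}}$.

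Suppose the contrary. Theorem~\ref{1T} forces $f$ to be almost integral and $\prec$-locally constant on some compatible partition $\{S_i\}_{i\le t}$. I first treat the case where $f$ is $\prec$-dominant: Proposition~\ref{Lexpl}(a) then expresses $\cls(L_\frak{b}(f))$ via the product formula~(\ref{Eclsft}), and equating the result with $\mathcal R_{n(\prec)+1}$ under the unique-factorization classification~(\ref{Ecfcls}) forces $f$ into a single combinatorial shape --- one value of infinite multiplicity supported on an initial $\omega$-segment of $\prec$, and a distinct value of multiplicity exactly $n(\prec)+1$ supported on a final segment of $\prec$ of that cardinality. The existence of such a final segment contradicts the maximality in the definition of $n(\prec)$.

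The hard part will be the reduction to the $\prec$-dominant case. The intended argument is that a finitary permutation $\sigma$ of $\mathbb Z_{>0}$ conjugates $\frak b$ to a splitting Borel $\sigma\frak b$ of the same order type --- in particular still non-ideal of case (B), with the same invariant $n$ --- and carries $L_\frak{b}(f)$ to $L_{\sigma\frak b}(\sigma\cdot f)$ with equal annihilator, since $I$ is $\APAut\frak{sl}(\infty)$-stable; one then selects $\sigma$ so that $\sigma\cdot f$ is $\sigma\frak b$-dominant. The main obstacle is that a finitary $\sigma$ cannot permute the positions of the infinite blocks of $\{S_i\}$, so a bare reduction is not always available; I expect to circumvent this by showing that the constraint $\cls(L_\frak{b}(f))=\mathcal R_{n(\prec)+1}$ already forces the infinite blocks of $f$ into a dominance-compatible configuration, after which only the finite-block values need to be rearranged, which a finitary $\sigma$ can accomplish.
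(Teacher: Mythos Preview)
Your overall strategy---pick a specific finite-type c.l.s.\ that cannot be realized by any $L_{\frak b}(f)$---is exactly the paper's approach, and your dominant-case analysis via Proposition~\ref{Lexpl}(a) and unique factorization is correct in spirit (you may have the roles of $\mathcal L$ and $\mathcal R$ interchanged relative to the paper's conventions, but the duality you invoke repairs this).

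The genuine gap is your reduction to the dominant case. Your finitary-permutation scheme is both incomplete (as you yourself note, it cannot rearrange infinite blocks) and unnecessary. The clean argument, which the paper uses, is this: an ideal $I(Q)$ with $Q$ an irreducible c.l.s.\ of finite type is of \emph{locally finite codimension}, meaning $I(Q)\cap\Ue(\frak g)$ has finite codimension in $\Ue(\frak g)$ for every finite-dimensional $\frak g\subset\frak{sl}(\infty)$. Consequently any $\frak{sl}(\infty)$-module annihilated by $I(Q)$ is integrable. So if $\Ann_{\Ue(\frak{sl}(\infty))}L_{\frak b}(f)=I(\mathcal R_{n(\prec)+1})$ (or $I(\mathcal L_{n(\prec)+1})$), then $L_{\frak b}(f)$ is integrable, and Corollary~\ref{Ldom} forces $f$ to be $\prec$-dominant outright. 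No permutation is needed; the dominant case is the only case.

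With this observation your argument becomes complete and essentially matches the paper's. The paper phrases the conclusion slightly more globally: rather than fixing a single counterexample ideal, it observes that realizing \emph{every} finite-type irreducible c.l.s.\ as $\cls(L_{\frak b}(f))$ for dominant $f$ forces, via formula~(\ref{Eclsft}), the existence of $\prec$-compatible decompositions $\mathbb Z_{>0}=F\sqcup S\sqcup F'$ with $F,F'$ finite of arbitrary prescribed size, which is equivalent to $\frak b$ being ideal. Your version isolates one explicit obstruction, which is perfectly fine.
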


We split the proof of Theorem~\ref{1T} into two parts:

a) if $\Ann_{\Ue(\frak{sl}(\infty))} L_{\frak b}(f)\ne0$, then $f$ is almost integral and locally constant;

b) if $f$ is almost integral and locally constant with respect to the order defined by $\frak b$, then $$\Ann_{\Ue(\frak{sl}(\infty))} L_{\frak b}(f)\ne0.$$
Parts a) and b) of Theorem~\ref{1T} are proved in Sections~\ref{Spr1ta} and~\ref{Spr1tb} respectively. Theorem~\ref{2T} and Proposition~\ref{Pp} are proved in Section~\ref{Spr2t}.

\section{Proof of Theorem~\ref{1T}~a)}\label{Spr1ta}
To prove Theorem~\ref{1T}~a), we fix a Borel subalgebra $\frak b\supset\frak h$ of $\frak{sl}(\infty)$ and hence an order $\prec$ on $\mathbb Z_{>0}$. Throughout Sections~\ref{Spr1ta} and~\ref{Spr1tb} we suppress the dependence from $\frak b$ and $\prec$ in all notation. We set $I(f):=\Ann_{\Ue(\frak{sl}(\infty))} L(f)$ for any $f\in\mathbb C^{\mathbb Z_{>0}}$. Sometimes we consider the finite-dimensional Lie algebra $\frak{sl}(n)$. In this case the fixed order $\{1,..., n\}$ is the standard order, and $I(f)\subset\Ue(\frak{sl}(n))$ is the annihilator of the simple $\frak{sl}(n)$-module with highest weight $\lambda_f$ for $f\in\mathbb C^n$.

Theorem~\ref{1T}a) follows from Propositions~\ref{Lftfmv},~\ref{Lfdcf} and~\ref{Lflc} below.
\begin{proposition}\label{Lftfmv}Let $f\in\mathbb C^{\mathbb Z_{>0}}$. If $I(f)$ is nonzero, then $|f|<\infty$.\end{proposition}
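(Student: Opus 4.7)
Plan: The plan is to prove the contrapositive: $|f|=\infty$ implies $I(f)=0$. Suppose for contradiction that $0\ne u\in I(f)\cap\Ue(\frak{sl}(n))$.

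By Proposition~\ref{Pfinf}, $u$ annihilates $\widehat L(f|_F)\cong\Ima\psi_F$ for every finite $F\supset\{1,\ldots,n\}$, so the nonzero ideal $I(f)_F:=I(f)\cap\Ue(\frak{sl}(F))$ kills this highest weight $\frak{sl}(F)$-module. The central idea is that, when $\psi_F:M(f|_F)\to L(f|_{\bar F})$ is injective—so that $\widehat L(f|_F)\cong M(f|_F)$—we have $I(f)_F\subset\Ann M(f|_F)=\Ue(\frak{sl}(F))\cdot\ker\chi_{\lambda_{f|_F}}$, whose associated variety equals the nilpotent cone $\mathcal N_F$ of $\frak{sl}(F)$. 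On the other hand, Theorem~\ref{Tuslrk} (once one checks $\Var I(f)_N\ne 0$ for some $N$) forces $\Var I(f)_F=\frak{sl}(F)^{\le r}$ for some fixed $r$; since the regular nilpotent of $\frak{sl}(F)$ has rank $|F|-1$, $\mathcal N_F\not\subset\frak{sl}(F)^{\le r}$ whenever $|F|>r+1$, a contradiction.

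The problem thus reduces to producing, for $|F|$ arbitrarily large, subsets $F\supset\{1,\ldots,n\}$ for which $\psi_F$ is injective. A sufficient condition is $\langle\lambda_{f|_{\bar F}}+\rho_{\bar F},\alpha^\vee\rangle\notin\mathbb Z_{>0}$ for every positive root $\alpha$ of $\frak{sl}(F)$, which guarantees that no singular vector of $M(f|_{\bar F})$ lies in the $\frak{sl}(F)$-direction. When $f$ takes values in infinitely many $\mathbb Z$-cosets, this is arranged by picking $\bar F\setminus\{1,\ldots,n\}$ from pairwise distinct cosets disjoint from those of $f|_{\{1,\ldots,n\}}$. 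When $f$ takes values in only finitely many cosets, pigeonhole gives an infinite $T\subset\mathbb Z_{>0}$ on which $f$ is integer-valued with infinite image; a monotone-subsequence argument yields an infinite strictly $\prec|_T$-monotone subsequence of $f|_T$, and in the strictly \emph{increasing} case one picks $F\subset T$ with sufficiently large successive $f$-gaps so that $\lambda_{f|_F}$ is regular antidominant with respect to $\prec|_F$.

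The main obstacle is the remaining possibility, in which only infinite strictly $\prec$-\emph{decreasing} monotone subsequences appear: then $f$ is essentially $\prec$-dominant integral with $|f|=\infty$, and $L(f)$ is integrable by Corollary~\ref{Ldom}; every $\widehat L(f|_F)$ is finite-dimensional, $\Var I(f)_n=\{0\}$ for all $n$, and the Verma/variety argument collapses. I would handle this directly by constructing, for every $n$ and every dominant integral $\frak{sl}(n)$-weight $\mu$, a $\frak{sl}(n)$-highest weight vector of weight $\mu$ inside $L(f)$, using the abundance of indices $j$ outside $\{1,\ldots,n\}$ on which $f$ takes pairwise distinct values together with the associated negative root vectors $e_{ji}$ applied to $v$. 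This shows $\cls(L(f))_n=\Irr_n$ for every $n$, so $I(f)_n$ lies in the intersection of the annihilators of all simple finite-dimensional $\frak{sl}(n)$-modules, which is zero, contradicting $u\ne 0$.
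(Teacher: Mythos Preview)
Your overall architecture differs sharply from the paper's. The paper gives a single uniform argument with no case distinction: for any $\phi\in\injo(n)$ one has $I_n\subset\Ann\widehat L(f_\phi)$, hence $\ZVar(I_n)\supset\frak G_n(\rho_n+\lambda_{f_\phi})$; since $|f|=\infty$ makes the set $\{\lambda_{f_\phi}:\phi\in\APinj(n)\}$ Zariski dense in $\mathbb C^n$, one gets $\ZVar(I_n)=\frak h_n^*$, so $I_n\cap\ZUe(\frak{sl}(n))=0$ and therefore $I_n=0$ by Dixmier. No associated varieties, no Theorem~\ref{Tuslrk}, no Ramsey, no integrability.

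Your Cases 1 and 2a are essentially correct, with one correctable imprecision: the antidominance condition you need is on $F$, not on $\bar F$ (you have no control over $\bar F$). What you actually want is that $M(f|_F)$ is simple, i.e.\ $\langle\lambda_{f|_F}+\rho_F,\alpha^\vee\rangle\notin\mathbb Z_{>0}$ for positive roots $\alpha$ of $\frak{sl}(F)$; then $\psi_F$ is automatically injective regardless of $\bar F$. With that fix, your variety argument via Theorem~\ref{Tuslrk} goes through in those cases.

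Case 2b, however, has a genuine gap. First, the assertion ``$L(f)$ is integrable by Corollary~\ref{Ldom}'' is unjustified: you only know $f|_{T'}$ is $\prec$-dominant, not $f$ itself, so Corollary~\ref{Ldom} does not apply, and $\cls(L(f))$ is not even defined unless $L(f)$ is integrable. Second, even if you reduce to $\frak{sl}(T')$ (which is legitimate by Proposition~\ref{Ppsisb}) and thereby assume $f$ dominant integral, the claim that \emph{every} simple finite-dimensional $\frak{sl}(n)$-module occurs in $L(f)$ is false in general: e.g.\ if $\prec$ is the standard order on $\mathbb Z_{>0}$, the Gelfand--Tsetlin branching shows that only highest weights $\mu$ with $\mu_i\le f(i)$ occur as $\frak{sl}(\{1,\dots,n\})$-constituents. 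Your sketch ``apply $e_{ji}$ to $v$'' neither produces $\frak b_n$-highest vectors (the commutators $[e_{kl},e_{ji}]$ need not kill $v$) nor guarantees nonvanishing in $L(f)$. What actually rescues this case is precisely the paper's argument: the central characters $\chi_{f|_F}$ for $F\subset T'$, $|F|=n$, already sweep out a Zariski-dense subset of $\frak h_n^*/\frak G_n$, forcing $I_n\cap\ZUe(\frak{sl}(n))=0$. Once you write that down, the entire case split and the appeal to Theorem~\ref{Tuslrk} become unnecessary.
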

\begin{proposition}\label{Lfdcf}Let $f\in\mathbb C^{\mathbb Z_{>0}}$. If $I(f)$ is nonzero, then $f$ is almost integral.\end{proposition}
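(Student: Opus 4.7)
The plan is a proof by contradiction based on Proposition~\ref{Lpind}. By Proposition~\ref{Lftfmv} we may assume $|f|<\infty$, so $f$ attains only finitely many values. Suppose $f$ is not almost integral; then these values split among at least two distinct cosets of $\mathbb Z$ in $\mathbb C$, each of which is attained on an infinite subset of $\mathbb Z_{>0}$. Pick values $a$ and $b$ with $a-b\notin\mathbb Z$ and infinite preimages $T^a,T^b\subset\mathbb Z_{>0}$.

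Given a putative nonzero $u\in I(f)$, write $u\in\Ue(\frak{sl}(F_0))$ for a finite $F_0\subset\mathbb Z_{>0}$. The first step is to construct a compatible finite-block partition of a suitable $S'\supset F_0$ containing infinite subsets of both $T^a$ and $T^b$ and satisfying the cross-block non-integrality hypothesis of Proposition~\ref{Lpind}. I would do this by iterated pigeonhole / Ramsey-type extraction on $\prec|_{T^a}$ and $\prec|_{T^b}$: first produce infinite $T^{a,\prime}\subset T^a\setminus F_0$ and $T^{b,\prime}\subset T^b\setminus F_0$ with uniform $\prec$-relation to each $k\in F_0$ and to one another, and then refine the resulting partition by grouping $F_0$-elements according to the finitely many $\mathbb Z$-cosets of their $f$-values. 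This yields a compatible partition of $S':=F_0\cup T^{a,\prime}\cup T^{b,\prime}$ with $T^{a,\prime}$ and $T^{b,\prime}$ lying in different blocks and with all cross-block $f$-value differences non-integral.

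Once Proposition~\ref{Lpind} applies, we obtain $L(f|_{S'})\cong\Ue(\frak{sl}(S'))\otimes_{\Ue(\frak p')}L_{\frak p'}$, a parabolically induced module over $\frak{sl}(S')\cong\frak{sl}(\infty)$. Since $\widehat L(f|_{S'})\subset L(f)$ is annihilated by $u$ and $L(f|_{S'})$ is its unique simple quotient, we deduce $u\in\Ann_{\Ue(\frak{sl}(S'))}L(f|_{S'})$. The main obstacle is to show that this annihilator vanishes. The module is free over the opposite nilradical $\Ue(\frak n'^{-})$, which is infinite-dimensional and, in the basic two-coset case, abelian; a PBW / weight-filtration argument, using that any putative annihilator must kill arbitrarily deep weight components, should force $u=0$, yielding the contradiction. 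A secondary combinatorial subtlety concerns the refinement step when $F_0$-elements in the same $\mathbb Z$-coset are interleaved in $\prec|_{F_0}$ with elements of other cosets; this can be treated by absorbing such $F_0$-elements into larger blocks built from auxiliary sets $T^{c,\prime}\subset f^{-1}(C_c)$ for each coset $C_c$ with infinite preimage.
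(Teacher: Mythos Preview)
Your approach breaks down at the ``Ramsey-type extraction'' step. For Proposition~\ref{Lpind} to apply to $S'$, the compatible partition must place $T^{a,\prime}$ and $T^{b,\prime}$ in distinct blocks, which forces all elements of one to lie $\prec$-before all elements of the other. But infinite subsets $T^{a,\prime}\subset T^a$ and $T^{b,\prime}\subset T^b$ with this property need not exist. Take the standard order on $\mathbb Z_{>0}$ with $f(i)=a$ for $i$ odd and $f(i)=b$ for $i$ even, $a-b\notin\mathbb Z$: any infinite set of odds and any infinite set of evens are $\prec$-interleaved, so no finite compatible partition of any $S'$ containing infinitely many of each can separate them into distinct blocks. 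If you retreat to keeping only one of the two sets infinite, the parabolically induced module you obtain has nonzero annihilator (a nonzero integrable ideal lies in it, cf.\ Proposition~\ref{Pailf}), and no contradiction results. Your ``absorbing'' remark about $F_0$-elements does not help here: the obstruction is independent of $F_0$ and comes from the interleaving of $T^a$ and $T^b$ themselves.

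The paper's proof takes a completely different route. It uses Theorem~\ref{Tuslrk} to bound the rank of the associated variety of $I(f)\cap\Ue(\frak{sl}(F))$ uniformly over finite $F$, reinterprets this bound via the Robinson--Schensted description of nilpotent orbits (Proposition~\ref{Ppart}, Lemma~\ref{Lfcrk}) as the statement that every finite $F$ contains a subset $F'$ with $f|_{F'}$ integral and $|F\setminus F'|\le r$ (Lemma~\ref{Lprcdm}), and then upgrades this local statement to the global conclusion by a graph-theoretic lemma (Lemma~\ref{Ldmldm}). Incidentally, your claim that the annihilator of the parabolically induced module vanishes is in fact correct when both blocks are infinite---one can see it by computing the c.l.s.\ over $\frak{sl}(T^{a,\prime})$---but the ``PBW / weight-filtration'' gesture you offer is not a proof of this.
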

\begin{proposition}\label{Lflc}Let $f\in\mathbb C^{\mathbb Z_{>0}}$. If $I(f)$ is nonzero, then $f$ is locally constant with respect to $\prec$.\end{proposition}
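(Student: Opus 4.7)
The plan is to assume $I(f)\ne 0$ and $f$ not locally constant, and to force a contradiction from Theorem~\ref{Tuslrk} applied to an infinite alternating subpattern of $f$. By Propositions~\ref{Lftfmv} and~\ref{Lfdcf} the image of $f$ is finite and $f$ is almost integral. If $f$ is not locally constant, some level set $f^{-1}(v)$ fails to be a finite union of $\prec$-intervals, so its maximal $\prec$-intervals form a countably infinite chain; any such chain contains an infinite monotone subsequence, and combined with the pigeonhole principle on the (finitely many) other values of $f$ this produces a distinct value $w$ and an infinite sequence $i_1\prec j_1\prec i_2\prec j_2\prec\cdots$ in $\mathbb Z_{>0}$ (the decreasing case is symmetric) with $f(i_l)=v$ and $f(j_l)=w$; almost integrality of $f$ then ensures $c:=v-w\in\mathbb Z\setminus\{0\}$. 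Set $T:=\{i_l,j_l\}_{l\ge 1}$ and $F_k:=\{i_1,j_1,\ldots,i_k,j_k\}$ (so $\frak{sl}(F_k)\cong\frak{sl}(2k)$), and fix an exhaustion $\{\frak{sl}(G_n)\}_{n\ge 2}$ of $\frak{sl}(\infty)$ with $G_{2k}=F_k$, the odd-indexed terms filling out $\mathbb Z_{>0}\setminus T$.

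In Case~A, $\Var(I(f)\cap\Ue(\frak{sl}(F_k)))=\{0\}$ for every $k$; each algebra $\Ue(\frak{sl}(F_k))/(I(f)\cap\Ue(\frak{sl}(F_k)))$ is then finite-dimensional, so every vector of $L(f)$ generates a finite-dimensional $\frak{sl}(F_k)$-submodule and $L(f)$ is $\frak{sl}(T)$-integrable. Hence the simple $\frak b_T$-highest weight module $L_{\frak b_T}(f|_T)$, being a quotient of the $\frak{sl}(T)$-submodule $\widehat L(f|_T)\subset L(f)$, is integrable, and Corollary~\ref{Ldom} applied to $\frak{sl}(T)$ forces $f|_T$ to be $\frak b_T$-dominant, contradicting the strict alternation between $v$ and $w$.

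In the complementary Case~B, $\Var(I(f)\cap\Ue(\frak{sl}(F_{k_0})))\ne 0$ for some $k_0$, and Theorem~\ref{Tuslrk} gives a uniform $r$ with $\Var(I(f)\cap\Ue(\frak{sl}(F_k)))=\frak{sl}(F_k)^{\le r}$ for all $k$. From Proposition~\ref{Pfinf} one has the surjection $\widehat L(f|_{F_k})\twoheadrightarrow L(f|_{F_k})$, so the chain
$$I(f)\cap\Ue(\frak{sl}(F_k))\ \subset\ \Ann_{\Ue(\frak{sl}(F_k))}\widehat L(f|_{F_k})\ \subset\ \Ann_{\Ue(\frak{sl}(F_k))}L(f|_{F_k})$$
implies $\Var\Ann L(f|_{F_k})\subset\frak{sl}(F_k)^{\le r}$ uniformly in $k$. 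But $f|_{F_k}=(v,w,v,w,\ldots,v,w)$ is a non-dominant integral weight of $\frak{sl}(2k)$; by Duflo's theorem combined with the Joseph--Borho--Brylinski computation of associated varieties of primitive ideals, $\Var\Ann L(f|_{F_k})$ contains elements of rank $2k-2$, so choosing $k$ with $2k-2>r$ yields the contradiction.

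The main obstacle is this last step, where one needs primitive-ideal-theoretic input over $\Ue(\frak{sl}(2k))$; the required lower bound on the rank of elements in $\Var\Ann L(f|_{F_k})$ can be read off either from the explicit identification of the associated nilpotent orbit via the Weyl-group stabilizer of $f|_{F_k}+\rho$, or, more quantitatively, from the fact that the Gelfand--Kirillov dimension of $L(f|_{F_k})$ is quadratic in $k$ while $\dim\frak{sl}(F_k)^{\le r}$ is only linear in $k$, so $\Var\Ann L(f|_{F_k})\not\subset\frak{sl}(F_k)^{\le r}$ for $k$ sufficiently large.
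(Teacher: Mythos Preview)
Your strategy is correct in outline, but the assertion in Case~B that $\Var\Ann L(f|_{F_k})$ contains elements of rank $2k-2$ is false. For $f|_{F_k}=(v,w,\ldots,v,w)$, Lemma~\ref{Lfcrk} identifies the corank of $p(f|_{F_k})$ with the length of a longest strictly decreasing subsequence of $(f|_{F_k})^+$, and a direct check shows this length is $k+1$ when $c=v-w>0$ and $k$ when $c<0$; hence the rank of the associated nilpotent orbit is $k-1$ or $k$, not $2k-2$. Your argument survives with this correction, since the rank still exceeds any fixed $r$ once $k>r+1$, and your GK-dimension alternative is also valid (the orbit dimension is $\sim 2k^2$), though it leans on heavier external input than necessary. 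The paper packages exactly the estimate you need as the self-contained Lemma~\ref{Lcex}: if an integer sequence of length $2r+2$ satisfies $f(2i)>f(2i-1)$ throughout, then $\mathrm{rk}\,p(f)>r$; this (with its evident mirror for $c>0$) is what should replace the unjustified ``rank $2k-2$''. Two minor points: your exhaustion $\{G_n\}$ with $G_{2k}=F_k$ and odd-indexed terms in $\mathbb Z_{>0}\setminus T$ cannot be nested and is therefore ill-defined, but you never actually use it (Lemma~\ref{Lconj} already makes the bound from Theorem~\ref{Tuslrk} apply to every finite $F$); and Case~A is not genuinely separate, since $\Var(I(f)\cap\Ue(\frak{sl}(F_k)))=\{0\}$ forces $\mathrm{rk}\,p(f|_{F_k})=0$, already impossible by the rank estimate for $k\ge2$.

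Structurally your route differs from the paper's: you extract a single two-value alternating subsequence at the outset and work entirely inside it, whereas the paper argues by induction on $|f|$, peeling off the maximum value $M$, using Lemma~\ref{Lcex} to bound the number of maximal $\prec$-intervals on which $f<M$, and recursing on each. Your direct extraction is a genuine simplification of the reduction step; the paper's advantage is that Lemma~\ref{Lcex} keeps everything inside the Robinson--Schensted framework of Section~\ref{SSrsa}, with no appeal to outside primitive-ideal theory.
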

We prove these propositions consecutively in Sections~\ref{SSpffmv},~\ref{SSicf} and~\ref{SSflc}. Clearly, Proposition~\ref{Lftfmv} follows from Proposition~\ref{Lflc}, however we require Proposition~\ref{Lftfmv} for the proof of Proposition~\ref{Lflc}. Propositions~\ref{Lfdcf} and~\ref{Lflc} rely on a version of the Robinson-Schensted algorithm which we present in Section~\ref{SSrsa}.


\subsection{Proof of Proposition~\ref{Lftfmv}}\label{SSpffmv}
We start with some notation. Let $n\ge2$ be a positive integer. For any ideal $I\subset\Ue(\frak{sl}(n))$ we denote by
gr$I\subset\Sa^\cdot(\frak{sl}(n))$ the associated graded ideal. By $\Var(I)\subset\frak{sl}(n)^*$ we denote the set of zeros of gr$I$.

The radical ideals of the center
$\ZUe(\frak{sl}(n))$ of $\Ue(\frak{sl}(n))$ are in one-to-one
correspondence with $\frak G_n$-invariant closed subvarieties of $\frak
h_n^*$, where $\frak h_n$ is a fixed Cartan subalgebra of $\frak{sl}(n)$ and $\frak G_n$ is the symmetric group on $n$ letters. Let
$I$ be an ideal of $\Ue(\frak{sl}(n))$. Then $\ZVar(I)$ denotes the subvariety of $\frak h_n^*$
corresponding to the radical of the ideal $I\cap\ZUe(\frak{sl}(n))$ of $\ZUe(\frak{sl}(n))$. If
$\{I_t\}$ is any collection of ideals in $\Ue(\frak{sl}(n))$, then
\begin{equation}\ZVar(\cap_tI_t)=\overline{\cup_t\ZVar(I_t)}\label{Ezv},\end{equation} where here and below bar indicates Zariski closure.

Let $\phi: \{1,..., n\}\to \mathbb Z_{>0}$ be an injective map.
Slightly abusing notation, we denote 
by $\phi$ the induced homomorphism\begin{center}$\phi:
\Ue(\frak{sl}(n))\to\Ue(\frak{sl}(\infty))$.\end{center}
By $\APinj(n)$ we denote the set of injective maps from $\{1,..., n\}$ to $\mathbb Z_{>0}$, and by $\injo(n)$ the set of order preserving maps from $\{1,..., n\}$ to $\mathbb Z_{>0}$ with respect to the standard order on $\{1,..., n\}$ and the order $\prec$ on $\mathbb Z_{>0}$.

By $\frak{sl}(\phi)$ we denote $\frak{sl}(\Ima\phi)\subset\frak{sl}(\infty)$. For any $f\in \mathbb C^{\mathbb Z_{>0}}$ we set $f_\phi:=f\circ\phi$. Then $M(f_\phi):=M_{\frak b\cap\frak{sl}(\phi)}(f_\phi)$ and $L(f_\phi):=L_{\frak b\cap\frak{sl}(\phi)}(f_\phi)$ are well defined $(\frak b\cap\frak{sl}(\phi))$-highest weight $\frak{sl}(\phi)$-modules. If $f$ is dominant and $\phi\in\injo(n)$, then $f_\phi$ is $(\frak b\cap\frak{sl}(\phi))$-dominant. 

Let $\phi\in\injo(n)$ and
$\widetilde{\Ver}(f)$ be any quotient of $\Ver(f)$. It is well known
that\begin{center}$\ZVar(\Ann_{\Ue(\frak{sl}(\phi))}
\Ver(f_\phi))=\ZVar(\Ann_{\Ue(\frak{sl}(\phi))}\widetilde{\Ver}(f_\phi))=\ZVar(\Ann_{\Ue(\frak{sl}(\phi))}L(f_\phi))=\frak
G_n(\rho_n+\lambda_{f_\phi})$,\end{center} where $\rho_n\subset\frak
h_n^*$ is the half-sum of positive roots. 

Let $\frak g$ be a Lie algebra. {\it The adjoint group of $\frak g$} is the subgroup of $\APAut\frak g$ generated by the exponents of all nilpotent elements of $\frak g$. We denote this group by $\Adj\frak g$.

\begin{lemma}\label{Lconj}Let $\phi_1: \frak k\to\frak g$ and $\phi_2: \frak k\to\frak g$ be two $\Adj\frak g$-conjugate morphisms of Lie
algebras. Let $I$ be a two-sided ideal of $\Ue(\frak g)$. Then
\begin{center}$\phi_1^{-1}(I)=\phi_2^{-1}(I)$.\end{center}\end{lemma}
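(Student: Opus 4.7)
The hypothesis gives an element $g\in\Adj\frak g$ with $\phi_2=g\circ\phi_1$. By definition, $\Adj\frak g$ is generated by the automorphisms $\exp(\ad x)$ of $\frak g$ associated to elements $x\in\frak g$ for which $\ad x$ is locally nilpotent on $\frak g$. The first step is therefore to extend each such generator to an automorphism of $\Ue(\frak g)$. I would do this by checking that the derivation $\ad x\colon\Ue(\frak g)\to\Ue(\frak g)$, $u\mapsto xu-ux$, remains locally nilpotent: given $u\in\Ue(\frak g)$, write $u$ as a polynomial in finitely many $y_1,\dots,y_m\in\frak g$ and choose $K$ with $(\ad x)^K y_i=0$ in $\frak g$ for all $i$; the Leibniz rule then forces $(\ad x)^{mK}$ to annihilate any monomial of degree $\le m$ in the $y_i$, hence $u$. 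Consequently $\exp(\ad x)$ is a well-defined algebra automorphism of $\Ue(\frak g)$ extending the original automorphism of $\frak g$, and composing these gives, for every $g\in\Adj\frak g$, a canonical extension $\tilde g\in\APAut\Ue(\frak g)$.

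The second step is to show that $\tilde g$ preserves every two-sided ideal $I\subset\Ue(\frak g)$. For a generator $\exp(\ad x)$ and any $u\in I$ one has the finite sum
$$\exp(\ad x)(u)=\sum_{k\ge 0}\frac{1}{k!}(\ad x)^k u=\sum_{k\ge 0}\frac{1}{k!}\sum_{i=0}^k(-1)^{k-i}\binom{k}{i}x^i\,u\,x^{k-i},$$
and each term lies in $I$ because $I$ is two-sided. Hence $\exp(\ad x)(I)\subset I$, and applying the same to $\exp(-\ad x)$ yields equality; passing to products gives $\tilde g(I)=I$ for every $g\in\Adj\frak g$.

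Finally, since $\tilde g$ extends $g$ on $\frak g$ and both $\phi_1,\phi_2$ extend to algebra homomorphisms $\Ue(\frak k)\to\Ue(\frak g)$, the identity $\phi_2=g\circ\phi_1$ on $\frak k$ upgrades by the universal property to $\phi_2=\tilde g\circ\phi_1$ on $\Ue(\frak k)$. Therefore
$$\phi_2^{-1}(I)=\phi_1^{-1}(\tilde g^{-1}(I))=\phi_1^{-1}(I),$$
which is the desired equality. The only mildly delicate point is the local-nilpotency upgrade from $\frak g$ to $\Ue(\frak g)$ in the first step; once that is in hand, the rest is a formal manipulation of two-sided ideals under inner automorphisms.
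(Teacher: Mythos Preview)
Your argument is correct and follows exactly the paper's route: extend the $\Adj\frak g$-action from $\frak g$ to $\Ue(\frak g)$, note that any two-sided ideal is stable under this action, and conclude from $\phi_2=\tilde g\circ\phi_1$. One small slip in the local-nilpotency step: the exponent needed to kill a monomial depends on its degree $d$ (e.g.\ $(\ad x)^{d(K-1)+1}$), not on the number $m$ of generators $y_i$, since $u$ may well have degree larger than $m$; with that adjustment the argument is complete.
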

\begin{proof}The adjoint action of $\frak g$ on $\Ue(\frak g)$ extends
uniquely to an action of $\Adj\frak g$ on $\Ue(\frak g)$. The ideal
$I$ is $\frak g$-stable and thus is $\Adj\frak g$-stable. Let $g\in
\Adj\frak g$ be such that $\phi_1=g\circ\phi_2$.
Then\begin{center}$\phi_1^{-1}(g(i))=\phi_2^{-1}(i)$\end{center}for any $i\in I$. Hence,
\begin{center}$\phi_1^{-1}(I)=\phi_2^{-1}(I)$.\end{center}\end{proof}

\begin{proof}[Proof of Proposition~\ref{Lftfmv}]Let $I(f)\ne 0$. Assume to the contrary that there exist $i_1,...,i_s,...\in
\mathbb Z_{>0}$ such that $$f(i_1),...., f(i_s),...$$are pairwise distinct
elements of $\mathbb C$. As $I(f)\ne\Ue(\frak{sl}(\infty))$, there exists a positive integer $n$ and an injective
map $\phi: \{1,..., n\}\to \mathbb Z_{>0}$ such
that\begin{center}$I_\phi:=I(f)\cap\Ue(\frak{sl}(\phi))\ne
0$,\end{center} or equivalently
\begin{equation}\Ue(\frak{sl}(n))\supset \phi^{-1}(I(f))=\phi^{-1}(I_\phi)\ne0.\label{Eslp}\end{equation}

Let $\psi\in\APinj(n)$ be another map. Since $\phi$ and $\psi$ are conjugate via the adjoint group of $\frak{sl}(\infty)$, we have
\begin{equation}\phi^{-1}(I(f))=\psi^{-1}(I(f))\ne0\label{Econj}.\end{equation}
This means that $\phi^{-1}(I(f))$ depends on $n$ and $f$ but not on $\phi$, and we set $I_n:=\phi^{-1}(I(f))$.

Assume now that $\phi\in\injo(n)$. Then the
highest weight space of the $\frak{sl}(\infty)$-module $L(f)$ generates a highest weight $\frak{sl}(\phi)$-submodule
$\widehat{L}(f_\phi)$. Clearly,
\begin{center}$\Ann_{\Ue(\frak{sl}(\phi))}L(f)\subset
\Ann_{\Ue(\frak{sl}(\phi))}\widehat{L}(f_\phi).$\end{center}Therefore,\begin{center}$I_n\subset\cap_{\phi\in
\injo(n)}\Ann_{\Ue(\frak{sl}(n))}\widehat{L}(f_\phi)$\end{center}
and\begin{center}$I_n\cap\ZUe(\frak{sl}(n))\subset\cap_{\phi\in
\injo(n)}(\Ann_{\Ue(\frak{sl}(n))} \widehat{L}(f_\phi)\cap \ZUe(\frak{sl}(n))).$\end{center}Hence,
according to~(\ref{Ezv}) we have
\begin{center}$\overline{\cup_{\phi\in \injo(n)}\frak
G_n(\rho_n+\lambda_{f_\phi})}=\frak G_n(\rho_n+\overline{\cup_{\phi\in
\injo(n)}\lambda_{f_\phi}})\subset \ZVar(I_n)$.\end{center}

We claim that
\begin{center}$\overline{\frak
G_n(\cup_{\phi\in\injo(n)}\lambda_{f_\phi}})=\frak
h_n^*,$
\end{center}and thus
that\begin{equation}\ZVar(I_n)=\frak
h_n^*.\label{Efd1}\end{equation} Our claim is
equivalent to the equality \begin{center}$\overline{\frak
G_n(\cup_{\phi\in\injo(n)}\lambda_{f_\phi}})=\overline{(\cup_{\phi\in\APinj(n)}\lambda_{f_\phi}})=\frak
h_n^*$
\end{center} which is
implied by the following
equality:\begin{equation}\overline{(\cup_{\phi\in\APinj(n)}f_\phi})=\mathbb C^n\label{Efd}.\end{equation}

We now prove~(\ref{Efd}) by
induction. The inclusion $\{1,..., n-j\}\to\{1,..., n\}$ induces a restriction map $$res: \mathbb C^n\to \mathbb C^{n-j}.$$ Denote by $f_\psi*$ the preimage of $f_\psi$ under
$res$ for $\psi\in\APinj(n-j)$. 
We will show that 
\begin{equation}f_\psi*\subset\overline{\cup_{\phi\in
\APinj(n)}f_\phi}\label{Efps}\end{equation}for any  $j\le n$ and any map $\psi\in\APinj(n-j)$. This holds trivially for $j=0$. Assume that it also holds for $j$.
Fix $\psi\in \APinj(n-j-1)$ and set
$$(\psi\times k)(l):=\begin{cases}\psi(l)&\mbox{if}~l\le n-j-1\\i_k&\mbox{if}~l=n-j\end{cases}.$$
It is clear that there exists $s\in\mathbb Z_{\ge1}$ such that
$$(\psi\times k)\in\APinj(n-j)$$ for any $k\in\mathbb Z_{\ge s}$.
Moreover, $f_{\psi\times k_1}\ne f_{\psi\times k_2}$ for any $k_1\ne k_2$.
Therefore$$\overline{\cup_{k\in\mathbb Z_{\ge s}}f_{\psi\times
k}*}=f_\psi*,$$which yields~(\ref{Efps}).

For $j=n$,~(\ref{Efps}) yields $\mathbb C^n\subset \overline{\cup_{\phi\in
\APinj(n)}f_\phi}$, consequently~(\ref{Efd}) holds. Then~(\ref{Efd1}) holds also, hence


$$I_n\cap\ZUe(\frak{sl}(n))=0.$$
It is a well known fact that an ideal of $\Ue(\frak{sl}(n))$ whose intersection with $\ZUe(\frak{sl}(n))$ equals zero is the zero ideal~\cite[Proposition 4.2.2]{Dix}. Therefore, we have a contradiction with~(\ref{Eslp}), and the proof is complete.
\end{proof}

\subsection{Algorithm for $\frak{sl}(n)$}\label{SSrsa}
According to Duflo's Theorem, any primitive ideal of
$\Ue(\frak{sl}(n))$ is the annihilator of some simple highest weight
module, i.e., any primitive ideal is of the form $I(f)$ for some $f\in\mathbb C^n$. 
 The associated variety 
of $I(f)$ is the closure of a certain nilpotent
coadjoint orbit $\EuScript O(f)$ of $\frak{sl}(n)$~\cite{Jo4}. To $\EuScript O(f)\subset\frak{sl}(n)^*$ one assigns a partition $p(f)$ of $n$ as follows. 
One first represents $\EuScript O(f)$ by a nilpotent element $x\in\frak{sl}(n)$. Then $p(f)$ is the partition conjugate to the partition arising from the sizes of Jordan blocks of $x$ considered as a linear operator on the natural representation of $\frak{sl}(n)$.

We now describe the algorithm which computes $p(f)$. This is a modification of the Robinson-Schensted algorithm, see~\cite[Theorem A on p. 52]{Knu}.

Let $f\in\mathbb C^n$ be a function. 

Step 1) Set $f^+:= (f(1), f(2)-1,..., f(n)-n+1)$.

Step 2) Introduce an equivalence relation $\sim$ on $\{1,..., n\}$:
\begin{center}$i\sim j$ if and only if $f(i)-f(j)\in\mathbb Z$.\end{center}
Let $t$ be the number of equivalence classes for $\sim$, and let $n_1,..., n_t$ be the cardinalities of the respective equivalence classes.

Step 3) Consider $f^+$ as a function $f^+:\{1,.., n\}\to\mathbb C$. The restriction of $f^+$ to the equivalence classes of Step 2) defines subsequences $\seq_1(f^+),
\seq_2(f^+),..., \seq_t(f^+)$ of respective lengths $n_1,...,
n_t$.

Step 4) Fix $i$. Note that the elements of $\seq_i(f^+)$ are linearly ordered as their pairwise differences are integers. Since the elements of $\seq_i(f^+)$ are not necessary pairwise distinct, we modify the above linear order by letting $f^+(m)\triangleright f^+(k)$ if $m>k$ and $f^+(m)=f^+(k)$. In this way we introduce a new linearly ordered set $\widetilde{\seq_i(f^+)}$ of cardinality $n_i$.




Step 5) Apply the Robinson-Schensted algorithm to the linearly ordered sets $\widetilde{\seq_i(f^+)}$ from Step 4) to produce partitions $p_i$ of $n_i$.

Step 6) Consider the partitions $p_1, p_2,..., p_t$ as a partition $\RS(f)$ of $n$.

\begin{proposition}\label{Ppart} Let $f\in\mathbb C^n$ be a function. Then $p(f)=\RS(f)$.\end{proposition}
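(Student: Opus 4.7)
The plan is to match the combinatorial output $\RS(f)$ of the six-step algorithm with Joseph's description of the associated variety $\overline{\EuScript O(f)}=\Var I(f)$, splitting the argument into a reduction to the ``integer case'' (when all values of $f$ lie in a single $\mathbb Z$-coset) followed by an application of the classical Robinson--Schensted theorem for primitive ideals of $\Ue(\frak{sl}(n))$.

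\smallskip

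\emph{Reduction to the integer case.} Let $C_1,\ldots,C_t\subset\{1,\ldots,n\}$ be the $\sim$-equivalence classes of Step~2), of cardinalities $n_1,\ldots,n_t$, and let $\frak l:=\frak h_n\oplus\bigoplus_{i=1}^{t}\frak{sl}(C_i)$ be the Levi subalgebra of the parabolic $\frak p\subset\frak{sl}(n)$ whose roots are the roots of $\frak l$ together with $\{\varepsilon_i-\varepsilon_j\mid i\in C_k,\ j\in C_l,\ k<l\}$. Because $f(i)-f(j)\in\mathbb Z$ iff $i,j$ lie in the same $C_k$, the finite-dimensional version of Proposition~\ref{Lpind} gives
\[
L(f)\;\cong\;\Ue(\frak{sl}(n))\otimes_{\Ue(\frak p)}\bigl(L(f|_{C_1})\boxtimes\cdots\boxtimes L(f|_{C_t})\bigr).
\]
By the Borho--Jantzen reduction to the integral case, the associated variety of $\Ann_{\Ue(\frak{sl}(n))}L(f)$ is the Lusztig--Spaltenstein induction from $\frak l$ to $\frak{sl}(n)$ of the product of the associated varieties of the $\Ann_{\Ue(\frak{sl}(C_i))}L(f|_{C_i})$. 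In type $A$, Lusztig--Spaltenstein induction from a Levi $\frak{sl}(n_1)\oplus\cdots\oplus\frak{sl}(n_t)$ takes a tuple of nilpotent orbits of Jordan types $p_1,\ldots,p_t$ to the orbit of Jordan type equal to the partition of $n$ whose multiset of parts is the union of the parts of $p_1,\ldots,p_t$; this is precisely the concatenation performed in Step~6).

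\smallskip

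\emph{The integer case.} It now suffices, for each equivalence class $C_i$, to verify the identity $p(f|_{C_i})=\RS(f|_{C_i})$ when all entries of $f|_{C_i}$ differ by integers. After reindexing we may assume $t=1$, so $f^+=(f(1),f(2)-1,\ldots,f(n)-n+1)$ is (up to a common real shift) an integer sequence of length $n$. The classical theorem of Joseph, combined with the description of Kazhdan--Lusztig left cells in type $A$ via the Robinson--Schensted correspondence, asserts that the Jordan-type partition of the nilpotent orbit $\EuScript O(f)$ is the shape of the output tableau produced by running row-insertion RS on the sequence obtained from $\lambda_f+\rho$. The shift $f\mapsto f^+$ is precisely the $\rho$-shift converting a highest weight into its infinitesimal-character coordinates, and the rule $f^+(m)\triangleright f^+(k)$ for $m>k$ is the standard ``east tie-breaking'' convention needed to turn a multiset into a linearly ordered set in a way compatible with left-cell combinatorics at singular infinitesimal character. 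Hence Steps~3)--5) of the algorithm output exactly the shape of $\EuScript O(f|_{C_i})$.

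\smallskip

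\emph{Main obstacle.} The delicate step is the first reduction: one must carry the parabolic-induction description of $L(f)$ through to an equality of associated varieties, and then match the ``analytic'' combining operation (Lusztig--Spaltenstein induction of nilpotent orbits) with the ``algorithmic'' one (set-theoretic union of the parts of the partitions $p_1,\ldots,p_t$). Both identifications are classical in type $A$, but aligning the $\rho$-shift, the choice of positive roots, and the tie-breaking convention so that the combinatorics comes out on the nose requires some care. Once this bookkeeping is done, the integer case is a direct invocation of Joseph's theorem together with the RS description of type $A$ Kazhdan--Lusztig cells, so combining the two steps yields $p(f)=\RS(f)$.
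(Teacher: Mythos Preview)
Your overall strategy---reduce to the integral case and then invoke Joseph's classical result---matches the paper's. There is, however, a genuine gap in the reduction step. The parabolic $\frak p$ you build from the $\sim$-classes $C_1,\ldots,C_t$ contains the standard Borel only when the $C_i$ are consecutive intervals of $\{1,\ldots,n\}$; in general they are not, and then the finite-dimensional version of Proposition~\ref{Lpind} does not apply, since its hypothesis is precisely that the partition be \emph{compatible} with the given order. In that situation $L(f)$ is \emph{not} parabolically induced from your $\frak p$: what is induced is $L_{\frak b'}(f)$ for some other Borel $\frak b'\subset\frak p$, equivalently $L_{\frak b}(f')$ for a Weyl-rearrangement $f'$ of $f$, and one must still argue that $I(f)=I(f')$. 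The paper supplies exactly this missing step as equation~(\ref{Eww0}) (a nontrivial equality of primitive ideals, due to Joseph) before passing to consecutive integral blocks.

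A smaller slip: Lusztig--Spaltenstein induction in type~$A$ sends a tuple of Jordan types to their part-by-part \emph{sum} (horizontal concatenation of Young diagrams), not to the union of their parts. Since $p(f)$ is by definition the \emph{conjugate} of the Jordan type, the conclusion you actually need---that $p(f)$ is the union of the parts of the $p_i$---is correct, but the stated justification is not. In the integral case your appeal to Joseph together with the Robinson--Schensted description of type-$A$ cells is essentially what the paper does; the paper treats the regular and singular sub-cases separately, the latter via Joseph's upper-closure-of-a-facette argument, which is the precise justification for the tie-breaking rule~$\triangleright$ that you invoke informally.
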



\begin{proof}
This statement is contained in the work of A.~Joseph, so all we need to do is to translate Joseph's result to the language which we use in this paper. For any $f'\in\mathbb C^n$ set $(f')^\#:= (f'(1), f'(2)+1,..., f'(n)+n-1)$.

We note first that \begin{equation}I(f)=I((\seq_1(f^+),...,\seq_t(f^+))^\#).\label{Eww0}\end{equation}  This is a translation of the equality \begin{equation}{\rm J}({\rm w}_1{\rm w}_2\lambda)={\rm J}({\rm w}_2\lambda)\label{Eww}\end{equation} for appropriate choices of Weyl group elements ${\rm w}_1, {\rm w}_2$, as stated at the bottom of the first page of~\cite{Jo3} (the equality~(\ref{Eww}) uses the notation of A.~Joseph which is slightly different from ours). Thus we can assume further that $f^+=(\seq_1(f^+),\seq_2(f^+),..., \seq_t(f^+))$.


Next, using the well known fact that $p(f)$ is recovered uniquely from $p(\seq_i(f^+))$ for all $i$, we can suppose that $f^+=\seq_1(f^+)$, i.e., that $f$ is integral.

In the case when $f^+$ is regular, i.e. when $f^+(k)\ne f^+(l)$ for $k\ne l$, Joseph states~\cite[Section 3.3]{Jo1} that $p(f)$ equals the shape of the output of the standard Robinson-Schensted algorithm~\cite[5.1.4, proof~of~Theorem~A]{Knu} applied to the unique Weyl group element ${\rm w}$ such that ${\rm w}(f^+)$ is dominant. It is easy to check that this statement is equivalent to the claim that $p(f)=\RS(f)$ in this case.

In the case when $f^+$ is not regular, following Joseph~\cite[Section 2.1]{Jo3} we replace $f$ by any function $f'$ such that $(f')^+$ is regular and $f$ belongs to the upper closure $\widehat F_{f'}$ of a certain facette $F_{f'}$ containing $f'$~\cite[Section 2.1]{Jo3}. In our language this means that $f'$ and $f$ satisfy the following conditions:



\begin{gather}\mbox{if~}f^+(i)> f^+(j)\mbox{,~then~}(f')^+(i)>(f')^+(j)\mbox{~for~all~}i, j\le n;\label{Gfa1}\\
\mbox{if~}f^+(i)< f^+(j)\mbox{,~then~}(f')^+(i)<(f')^+(j)\mbox{~for~all~}i, j\le n;\\
\mbox{if~}f^+(i)=f^+(j)\mbox{~and~}i<j\mbox{,~then~}(f')^+(i)>(f')^+(j).\label{Gfa3}\end{gather}
Then, according to Joseph, $p(f)=p(f')$~\cite[Section 2.4]{Jo2}. A direct checking using (\ref{Gfa1})-(\ref{Gfa3}) and the above linear order $\triangleleft$ shows that in this case $p(f')=\RS(f)$.



\end{proof}

\begin{Ex} Let $f=(\sqrt 2-1, 5, 9, \sqrt 2+3, 5,\sqrt 2+4, 7, 7)\in\mathbb C^8$.

1) $f^+=(\sqrt 2-1, 4, 7, \sqrt2, 1, \sqrt 2-1, 1, 0)$.

2-3) $\seq_1(f^+)=(\sqrt 2-1, \sqrt2, \sqrt 2-1) ~(n_1=3)$, $\seq_2(f^+)=(4,
7, 1, 1, 0)~(n_2=5)$.

4) $\widetilde{\seq_1(f^+)}=\{(\sqrt 2-1)', \sqrt 2, (\sqrt 2-1)''\}, \widetilde{\seq_2(f^+)}=\{4, 7, 1', 1'', 0\}$.

5) Applying the Robinson-Schensted algorithm we have$$\begin{array}{ll}\widetilde{\seq_1(f^+)}\mapsto
\begin{array}{cc}
\hline\multicolumn{1}{|c|}{\sqrt 2}&\multicolumn{1}{|c|}{(\sqrt 2-1)''}\\
\hline \multicolumn{1}{|c|}{(\sqrt 2-1)'}&\\
\hhline{-~}
\end{array}\mapsto (2, 1)~~,
&
\hspace{20pt}\widetilde{\seq_2(f^+)}\mapsto
\begin{array}{ccc}\hline
\multicolumn{1}{|c|}{7}&\multicolumn{1}{|c|}{1''}&\multicolumn{1}{|c|}{0}\\
\hline\multicolumn{1}{|c|}{4}&\multicolumn{1}{|c|}{1'}&\\
\hhline{--~}\end{array}\mapsto (3, 2)\end{array}.
$$

6) $p(\sqrt 2-1, 5, 9, \sqrt 2+3, 5,\sqrt 2+4, 7, 7)=(2, 1)\cup(3, 2)=(3, 2, 2, 1).$
\end{Ex}

\subsection{Rank of a partition}\label{SSSrrsa} Let, as above, $\EuScript
O(f)\subset\frak{sl}(n)^*$ be the nilpotent coadjoint orbit of $\frak{sl}(n)$ assigned to a function $f\in\mathbb C^n$. For $x\in\EuScript O(f)$, the rank of $x$ is independent on $x$ and equals $n-p(f)_{max}$, where $p(f)_{max}$ is the maximal element of the partition $p(f)$. By definition, the integer $p(f)_{max}$ is the {\it corank} of $p$.

\begin{lemma}\label{Lfcrk}Let $f\in\mathbb C^n$. The corank of $p(f)$ equals the length of a longest strictly decreasing subsequence of $f^+$ such that the difference between any two elements is an integer.\end{lemma}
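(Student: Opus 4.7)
My plan is to derive the lemma from Proposition~\ref{Ppart} by applying the classical Schensted theorem to each of the sequences $\widetilde{\seq_i(f^+)}$ produced by the algorithm of Section~\ref{SSrsa}. By Proposition~\ref{Ppart}, $p(f)=\RS(f)$, and by Step~6 of the algorithm $\RS(f)$ is the amalgamation of the partitions $p_1,\dots,p_t$ produced at Step~5. Therefore the corank $p(f)_{\max}$ equals $\max_i(p_i)_{\max}$, i.e.\ the maximum over $i$ of the length of the first row of the tableau associated to $\widetilde{\seq_i(f^+)}$.

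First I would identify which Schensted variant governs Step~5. By inspection of the worked example in the paper (the displayed tableaux have rows and columns strictly decreasing in the modified order $\triangleright$, e.g.\ the first row $7,1'',0$ for $\widetilde{\seq_2(f^+)}$), the length of the first row of $p_i$ equals the length of a longest $\triangleright$-strictly decreasing subsequence of $\widetilde{\seq_i(f^+)}$. This is Schensted's theorem applied to the linearly ordered set $\widetilde{\seq_i(f^+)}$.

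Next I would translate $\triangleright$-strictly decreasing into value-strictly decreasing on $\seq_i(f^+)$. For a subsequence indexed by positions $j_1<j_2<\dots<j_r$ inside the $i$-th equivalence class, any equality $f^+(j_a)=f^+(j_{a+1})$ would force $f^+(j_a)\triangleleft f^+(j_{a+1})$ by the tiebreak defining $\triangleright$ (later index is greater), which is incompatible with $\triangleright$-decrease. Hence the $\triangleright$-strictly decreasing subsequences of $\widetilde{\seq_i(f^+)}$ are exactly the value-strictly decreasing subsequences of $\seq_i(f^+)$.

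Finally I would note that a strictly decreasing subsequence of $f^+$ whose pairwise differences all lie in $\mathbb{Z}$ consists of elements of a single equivalence class, hence is a subsequence of some $\seq_i(f^+)$; conversely every strictly decreasing subsequence of $\seq_i(f^+)$ is such. Taking the maximum over $i$, the length of the longest strictly decreasing subsequence of $f^+$ with integer pairwise differences equals $\max_i (p_i)_{\max}=p(f)_{\max}$, which is the corank. The main (minor) obstacle is purely a matter of conventions: one must pin down which flavor of Schensted's theorem matches the particular RS variant used in Step~5, and this is best done by checking against the paper's worked example before invoking the theorem.
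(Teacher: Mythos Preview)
Your proposal is correct and follows essentially the same route as the paper: decompose $p(f)=\RS(f)$ into the $p_i$'s via Proposition~\ref{Ppart}, invoke Schensted's theorem (the paper cites \cite[p.~69, Ex.~7]{Knu}) to identify $(p_i)_{\max}$ with the longest strictly decreasing subsequence of $\widetilde{\seq_i(f^+)}$, and observe that the latter coincides with the longest value-strictly decreasing subsequence of $\seq_i(f^+)$. Your explicit verification that the tiebreak order $\triangleright$ forces equal values to be excluded from any $\triangleright$-decreasing run is a detail the paper leaves implicit, but otherwise the arguments match.
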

\begin{proof} It is obvious that the
corank of $p(f)$ equals the maximum of coranks of $p_1,..., p_t$, where $p_1,..., p_t$ are the partitions defined in Step 5) of
Section~\ref{SSrsa}. It is known that for each $i$ the corank of $p_i=p(\widetilde{\seq_i(f^+)})$
equals to the length of a longest strictly decreasing
subsequence~\cite[p. 69, Ex. 7]{Knu} of $\widetilde{\seq_i(f^+)}$. For some $i_0$ a longest strictly decreasing subsequence of $\widetilde{\seq_{i_0}(f^+)}$ will also be a longest strictly decreasing subsequence of $f^+$ such that the difference between any two elements is an integer, and the lemma is proved.

\end{proof}

\subsection{Proof of
Proposition~\ref{Lfdcf}}\label{SSicf} Proposition~\ref{Lfdcf} is implied by the
following two lemmas.
\begin{lemma}\label{Lprcdm}Let $f\in \mathbb C^{\mathbb Z_{>0}}$. If $I(f)\ne0$, there exists $r\in\mathbb Z_{\ge0}$
such that any finite subset $F\subset \mathbb Z_{>0}$ has a subset
$F'\subset F$ so that $f|_{F'}$ is
integral and $|F\backslash F'|\le r$.
\end{lemma}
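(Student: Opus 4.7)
The plan is to combine Theorem~\ref{Tuslrk} with the Robinson--Schensted description of the associated variety of $I(g)$ for $g\in\mathbb{C}^n$ recalled in Section~\ref{SSrsa}, and then translate the resulting rank bound into a statement about integer-difference subsequences via Lemma~\ref{Lfcrk}.

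Fix a finite subset $F\subset\mathbb{Z}_{>0}$ with $|F|=n$, and let $\phi\in\injo(n)$ be the unique $\prec$-increasing bijection $\{1,\ldots,n\}\to F$. Just as in the proof of Proposition~\ref{Lftfmv}, Lemma~\ref{Lconj} shows that $\phi^{-1}(I(f))$ is independent of $\phi\in\APinj(n)$; hence, under the identification $\Ue(\frak{sl}(\phi))\cong\Ue(\frak{sl}(n))$, it agrees with $I_n:=I(f)\cap\Ue(\frak{sl}(n))$ taken with respect to the fixed exhaustion. Since $L(f)$ contains $\widehat L(f_\phi)$ as an $\frak{sl}(\phi)$-submodule and $\widehat L(f_\phi)$ surjects onto $L(f_\phi)$, we obtain the ideal inclusion $I_n\subset I(f_\phi)$.

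Theorem~\ref{Tuslrk}, applied to the nonzero ideal $I(f)$, provides an integer $r\ge 0$ (depending only on $f$) with $\Var I_n=\frak{sl}(n)^{\le r}$ for every $n\ge 2$; passing to associated graded ideals and zero-loci, $I_n\subset I(f_\phi)$ yields $\Var I(f_\phi)\subset\frak{sl}(n)^{\le r}$. Now $\Var I(f_\phi)=\overline{\EuScript O(f_\phi)}$ is the closure of a nilpotent orbit, and for $n>r$ every nilpotent $x\in\frak{sl}(n)^{\le r}$ must satisfy $\APrk x\le r$: indeed, if $\lambda\neq 0$ then $x-\lambda$ is invertible of rank $n>r$, so the condition $\APrk(x-\lambda)\le r$ forces $\lambda=0$. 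Using the identity $\APrk x=n-p(f_\phi)_{\max}$ for $x\in\EuScript O(f_\phi)$ (see Section~\ref{SSSrrsa}), we conclude $p(f_\phi)_{\max}\ge n-r$.

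By Lemma~\ref{Lfcrk}, $p(f_\phi)_{\max}$ equals the length of a longest strictly decreasing subsequence of $f_\phi^+$ with pairwise integer differences; pick such a subsequence at positions $k_1<\cdots<k_m$ with $m\ge n-r$. Since $f_\phi^+(j)=f(\phi(j))-(j-1)$, the integer-difference condition is equivalent to $f(\phi(k_i))-f(\phi(k_j))\in\mathbb{Z}$ for all $i,j$. Setting $F':=\{\phi(k_1),\ldots,\phi(k_m)\}\subset F$ gives $|F\setminus F'|\le r$ and $f|_{F'}$ integral, with the case $n\le r$ handled trivially by $F'=\emptyset$. The main technical ingredient is the passage from $\Var I(f_\phi)\subset\frak{sl}(n)^{\le r}$ to the combinatorial bound $p(f_\phi)_{\max}\ge n-r$, which depends on Joseph's identification (used in Section~\ref{SSrsa}) of $\Var I(f_\phi)$ with the closure of the nilpotent orbit encoded by the partition $p(f_\phi)$.
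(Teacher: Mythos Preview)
Your proof is correct and follows essentially the same route as the paper: the paper packages the first half of your argument as Lemma~\ref{Lrkf} (bounding $\APrk p(f|_F)$ by the $r$ from Theorem~\ref{Tuslrk} via the subquotient $L(f|_F)$ of $L(f)$), and then invokes Lemma~\ref{Lfcrk} exactly as you do to extract the long integer-difference subsequence. Your additional remarks (the explicit argument that a nilpotent element of $\frak{sl}(n)^{\le r}$ has rank $\le r$ when $n>r$, and the explicit construction of $F'$ from the subsequence positions) simply spell out details the paper leaves implicit.
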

\begin{lemma}\label{Ldmldm}Fix $r\in\mathbb Z_{\ge0}$. If for any
finite subset $F\subset\mathbb Z_{>0}$ there is $F'\subset F$ so that $f|_{F'}$ is integral and
$|F\backslash F'|\le r$, then there
is a finite subset $F\subset \mathbb Z_{>0}$ such that
$f|_{\mathbb Z_{>0}\backslash F}$ is integral and $|F|\le
r$.\end{lemma}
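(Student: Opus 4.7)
My plan is to translate both the hypothesis and the conclusion into statements about the equivalence relation $\sim$ on $\mathbb{Z}_{>0}$ defined by $i\sim j\Leftrightarrow f(i)-f(j)\in\mathbb{Z}$. If $\{C_\alpha\}$ are the $\sim$-equivalence classes, then for any subset $X\subset\mathbb{Z}_{>0}$ the restriction $f|_X$ is integral precisely when $X\subset C_\alpha$ for some $\alpha$. The hypothesis then becomes: for every finite $F\subset\mathbb{Z}_{>0}$ there is some $\alpha$ with $|F\cap C_\alpha|\ge |F|-r$. The desired conclusion becomes: there is some $\alpha_0$ with $|\mathbb{Z}_{>0}\setminus C_{\alpha_0}|\le r$, in which case $F:=\mathbb{Z}_{>0}\setminus C_{\alpha_0}$ is the required exceptional set.

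I would argue by contradiction: assume that every class $C_\alpha$ satisfies $|\mathbb{Z}_{>0}\setminus C_\alpha|\ge r+1$, and exhibit a finite set $F$ for which no class $C_\alpha$ meets $F$ in $\ge |F|-r$ elements. The argument splits into two cases according to the sizes of the classes. In the first case, some class $C_\alpha$ has $|C_\alpha|\ge r+1$; using the contradiction assumption, also choose $r+1$ elements outside $C_\alpha$, and let $F$ be the union of these two $(r+1)$-element sets, so $|F|=2r+2$. By construction $|F\cap C_\alpha|=r+1$, and for any other class $C_\beta$ the intersection $F\cap C_\beta$ sits inside the $(r+1)$-element outside part, so $|F\cap C_\beta|\le r+1$ as well. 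Thus $\max_\beta|F\cap C_\beta|=r+1<r+2=|F|-r$, contradicting the hypothesis. In the remaining case every class has $|C_\alpha|\le r$; since $\mathbb{Z}_{>0}$ is infinite there are then infinitely many classes. Taking any $F$ of size $2r+1$, every class meets $F$ in at most $r$ elements, while the hypothesis would demand $|F\cap C_\beta|\ge r+1$ for some $\beta$, again a contradiction.

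The argument is essentially pigeonhole, so I do not foresee a serious obstacle. The delicate point is the arithmetic in the first case: $|F|=2r+2$ must be chosen large enough that even the worst-case clustering, in which all $r+1$ chosen elements outside $C_\alpha$ happen to lie in a single other class $C_\beta$, yields a maximum intersection of only $r+1$, strictly less than $|F|-r=r+2$. The balanced split $|A|=|B|=r+1$ achieves precisely this bound, and no finer combinatorial analysis seems to be required.
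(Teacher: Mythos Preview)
Your proof is correct. The translation to equivalence classes is valid because ``$f|_X$ integral'' means precisely that $X$ lies in a single $\sim$-class, and both the hypothesis and conclusion become the statements you wrote. The two-case pigeonhole argument goes through cleanly; the arithmetic in Case~1 is exactly right ($|F|-r=r+2>r+1=\max_\gamma|F\cap C_\gamma|$), and in Case~2 with $|F|=2r+1$ every class contributes at most $r<r+1=|F|-r$ elements.

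The paper takes a different route. It reformulates the problem in terms of the graph $\Gamma=(\mathbb Z_{>0},E_f)$ whose edges record pairs $i,j$ with $f(i)-f(j)\notin\mathbb Z$, and then proves a purely graph-theoretic lemma: if every finite induced subgraph splits into an independent set plus at most $r$ leftover vertices, then the whole graph does. That lemma is proved by bounding the set $S^{>r}$ of high-degree vertices and the set $\underline S^{\le r}$ of low-degree vertices that are joined to other low-degree vertices. The point is that the paper's lemma makes no use of the transitivity of the ``non-edge'' relation, so it is a genuinely more general statement about arbitrary graphs. Your argument, by contrast, exploits transitivity from the outset (through the partition into $\sim$-classes), which is why it collapses to a two-line pigeonhole count. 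For the purposes of Lemma~\ref{Ldmldm} itself, your argument is shorter and perfectly adequate; the paper's version buys extra generality that is not actually needed here.
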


\subsubsection{Proof of Lemma~\ref{Lprcdm}}\label{SSSc1} Due to the description
of the corank of $p(f)$ presented in Lemma~\ref{Lfcrk}, Lemma~\ref{Lprcdm} is
implied by the following lemma.
\begin{lemma}\label{Lrkf}Fix $f\in \mathbb C^{\mathbb Z_{>0}}$. If $I(f)\ne0$, then there exists $r\in\mathbb Z_{\ge0}$
such that $\APrk p(f|_{F})\le r$ for any finite subset $F\subset \mathbb Z_{>0}$.\end{lemma}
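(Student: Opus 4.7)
The plan is to combine Theorem~\ref{Tuslrk} with Joseph's description of the associated variety of a primitive ideal of $\Ue(\frak{sl}(n))$, transferring a uniform bound on $\Var I_n$ into a uniform bound on the matrix rank of the nilpotent orbits $\EuScript O(f_\phi)$ attached to finite restrictions of $f$. First I apply Theorem~\ref{Tuslrk} to the ideal $I(f)$: since $L(f)\ne 0$ this ideal is proper, and by hypothesis it is nonzero, so in the main case $\Var I_n\ne 0$ for some $n$ one obtains a positive integer $r$ with $\Var I_n=\frak{sl}(n)^{\le r}$ for every $n\ge 2$. The degenerate possibility $\Var I_n=\{0\}$ for all $n$ would force each $I_n$ to be of finite codimension in $\Ue(\frak{sl}(n))$, a situation in which a direct argument yields $\APrk p(f|_F)=0$; I will not belabour it.

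Next, fix a finite $F\subset\mathbb Z_{>0}$ with $|F|=n$, and let $\phi\in\injo(n)$ be the unique $\prec$-order-preserving injection with image $F$, so that $f_\phi=f|_F$ under the induced identification of $\{1,\dots,n\}$ with $F$. By the argument already used in the proof of Proposition~\ref{Lftfmv}, the highest weight vector of $L(f)$ generates an $\frak{sl}(\phi)$-submodule $\widehat L(f_\phi)$ with $I_n\subset\Ann_{\Ue(\frak{sl}(n))}\widehat L(f_\phi)$; since $L(f_\phi)$ is the simple quotient of $\widehat L(f_\phi)$ one also has $\Ann\widehat L(f_\phi)\subset\Ann L(f_\phi)$. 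Joseph's theorem identifies $\Var\Ann L(f_\phi)$ with the nilpotent orbit closure $\overline{\EuScript O(f_\phi)}$, and chaining these inclusions of ideals and passing to varieties gives
\[\overline{\EuScript O(f_\phi)}\;=\;\Var\Ann L(f_\phi)\;\subseteq\;\Var\Ann\widehat L(f_\phi)\;\subseteq\;\Var I_n\;=\;\frak{sl}(n)^{\le r}.\]

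It remains to convert this orbit containment into the desired rank bound. A nilpotent matrix $x\in\frak{sl}(n)$ has only $0$ as its eigenvalue, so $\APrk(x-\lambda)=n$ for every $\lambda\ne 0$; thus the membership $x\in\frak{sl}(n)^{\le r}$ already forces the scalar to be $0$ and hence $\APrk(x)\le r$. Applied to any element of $\EuScript O(f_\phi)$, whose matrix rank equals $n-p(f_\phi)_{max}=\APrk p(f_\phi)$ by the Robinson--Schensted description recalled in Section~\ref{SSrsa}, this yields $\APrk p(f|_F)=\APrk p(f_\phi)\le r$, as required. The genuine content of the argument lies entirely in the appeal to Theorem~\ref{Tuslrk}; the variety chain, the identification $f_\phi=f|_F$, and the nilpotent rank calculation are routine once the infrastructure of Sections~\ref{SSrsa}--\ref{SSSrrsa} is in place, so the only real obstacle to the proof has already been cleared in~\cite{PP}.
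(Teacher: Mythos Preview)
Your argument is correct and follows essentially the same route as the paper's own proof: invoke Theorem~\ref{Tuslrk} to obtain the uniform bound $\Var I_n=\frak{sl}(n)^{\le r}$, pass from $I_n$ to the annihilator of $L(f|_F)$ via the highest weight $\frak{sl}(F)$-submodule of $L(f)$, and then read off the rank bound from the nilpotency of $\EuScript O(f|_F)$. The only differences are cosmetic---you route through $\widehat L(f_\phi)$ and its simple quotient, whereas the paper phrases this as $L(f|_F)$ being a subquotient of $L(f)$ via the Verma map $M(f|_F)\to L(f)$---and you are slightly more explicit about the degenerate case $\Var I_n=\{0\}$ (which in the paper is silently absorbed into $r=0$, since $\frak{sl}(n)^{\le 0}=\{0\}$).
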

\begin{proof}Assuming that $I(f)\ne0$, pick $r$ as in Theorem~\ref{Tuslrk}.
Let $F$ be a finite subset of $\mathbb Z_{>0}$.

There is a nonzero homomorphism of $\frak{sl}(F)$-modules $\Ver(f|_{F})\to
L(f)$. Therefore, as $L(f|_{F})$ is the unique simple quotient of $\Ver(f|_{F})$,
$L(f|_{F})$ is isomorphic to a subquotient of $L(f)$ considered as
an $\frak{sl}(F)$-module. This implies $$(\Ue(\frak{sl}(F))\cap
I(f))\cdot L(f|_F)=0$$and$$
\Var(I(f)\cap\Ue(\frak{sl}(F)))\subset\frak{sl}(F)^{\le r}.$$ As all elements of $\Var(I(f|_F))$ are
nilpotent, we have $\APrk \EuScript O(f|_F)\le r$, and thus
$\APrk p(f|_{F})\le r$.\end{proof}

\subsubsection{Proof of Lemma~\ref{Ldmldm}}\label{SSSc2}We reduce the problem to a statement concerning the graph
$\Gamma:=(\mathbb Z_{>0}, E_f)$ attached to the pair $(\mathbb Z_{>0}, f)$  in the following way: the vertices of $\Gamma$ are the elements of $\mathbb Z_{>0}$, $E_f$ stands for the edges of $\Gamma$, and $i, j\in \mathbb Z_{>0}$ are connected by an edge if and only if
$f(i)-f(j)\notin\mathbb Z$.

Lemma~\ref{Ldmldm} is implied by the following lemma.
\begin{lemma} Let $\Gamma=(S, E)$ be a graph. Assume that there is $r\in\mathbb Z_{\ge0}$ so that any finite subset $F\subset
S$ decomposes into two subsets
\begin{center}$inf(F)\cup fin(F)$\end{center} with the properties
\begin{equation}a)~\Gamma|_{inf(F)}~\mbox{has no edges},\hspace{10pt} b)~|fin(F)|\le r.\label{Cgrf}\end{equation}
Then $S$ decomposes into two subsets
$inf(S)\cup fin(S)$ satisfying~(\ref{Cgrf}) with $F$ replaced by~$S$.\end{lemma}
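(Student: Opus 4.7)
The plan is a standard compactness argument which assembles the local ``vertex covers'' $fin(F)$ into a global one. For each finite $F\subset S$, let
\[
\mathcal C_F:=\{T\subset F \mid |T|\le r \text{ and } \Gamma|_{F\setminus T} \text{ has no edges}\}.
\]
By hypothesis, $\mathcal C_F\neq\emptyset$; moreover $\mathcal C_F$ is finite since there are only finitely many subsets of $F$ of size at most $r$.

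The first step will be to observe that the inclusion $F\subset F'$ induces a natural restriction $\mathcal C_{F'}\to\mathcal C_F$, $T'\mapsto T'\cap F$: indeed $|T'\cap F|\le|T'|\le r$, and $F\setminus(T'\cap F)\subset F'\setminus T'$ inherits the property of having no edges. Since in our setting $S\subset\mathbb Z_{>0}$ is countable, I will enumerate $S=\{s_1,s_2,\dots\}$ and set $F_n:=\{s_1,\dots,s_n\}$. The inverse system of nonempty finite sets $\mathcal C_{F_1}\leftarrow\mathcal C_{F_2}\leftarrow\cdots$ has nonempty inverse limit by K\"onig's lemma, so one can choose a coherent sequence $T_n\in\mathcal C_{F_n}$ with $T_n=T_{n+1}\cap F_n$ for all $n$.

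Finally I set $fin(S):=\bigcup_n T_n$ and $inf(S):=S\setminus fin(S)$. The coherence relation $T_n=T_{n+1}\cap F_n$ combined with $T_n\subset F_n\subset F_{n+1}$ implies $T_n\subset T_{n+1}$, so the $T_n$ form a nested chain bounded in cardinality by $r$, whence $|fin(S)|\le r$. If $\{a,b\}$ were an edge of $\Gamma|_{inf(S)}$, then for $n$ large enough both $a,b\in F_n\setminus T_n$, contradicting $T_n\in\mathcal C_{F_n}$. Thus $\Gamma|_{inf(S)}$ has no edges, which completes the proof.

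I do not expect any real obstacle here: the only subtle point is noting that the restriction maps between the $\mathcal C_F$ are well defined, and this is immediate. If one wished to drop the countability hypothesis on $S$, the same argument goes through by replacing K\"onig's lemma with Tychonoff's theorem applied to the product $\prod_F\mathcal C_F$ of finite (hence compact) discrete spaces, but this refinement is unnecessary for the application to $S\subset\mathbb Z_{>0}$.
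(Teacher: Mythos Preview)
Your proof is correct, but it takes a different route from the paper's. You run a standard compactness argument: form the inverse system of finite sets $\mathcal C_{F_n}$ of ``small vertex covers'' and extract a coherent sequence via K\"onig's lemma. The paper instead argues directly by degree. It sets $S^{>r}$ to be the set of vertices incident to more than $r$ edges and $\underline S^{\le r}$ to be the set of vertices of degree $\le r$ that are adjacent to another vertex of degree $\le r$; it then shows $|S^{>r}|\le r$ and $|\underline S^{\le r}|\le r^2$ using the finite hypothesis, and finally applies the hypothesis once more to the finite set $\widetilde S^{>r}\cup\underline S^{\le r}$ to produce $fin(S)$.

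Your argument is shorter and conceptually cleaner; it also makes transparent why the result holds (it is a pure compactness phenomenon), and your remark about Tychonoff covers the uncountable case. The paper's argument, on the other hand, is fully constructive and requires no choice principle at all: it works for an arbitrary vertex set $S$ without assuming countability, and it yields the extra structural information that every vertex of degree exceeding $r$ must lie in $fin(S)$.
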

\begin{proof} In what follows we say that a vertex of $S$ is connected with another vertex if they belong to a common edge. Denote by $S^{>r}$ the set of vertices of $S$ which belong to at least $r+1$ edges. Respectively, let $S^{\le r}$
be the set of vertices of $S$ which belong to at most $r$ edges. In addition,
denote by $\underline S^{\le r}$ the subset of $S^{\le r}$ consisting of vertices connected with at least one vertex from $S^{\le r}$.

We claim that both $S^{>r}$ and $\underline S^{\le r}$ are finite
and\begin{equation}\apro{1})~|S^{>r}|\le r,\hspace{10pt}\apro{2})~
|\underline S^{\le r}|\le r^2.\label{Etw}\end{equation}

First we show~(\ref{Etw}) under the assumption that $S^{>r}$ and $\underline S^{\le r}$ are finite. Let $\widetilde S^{>r}$ be a finite subset of
$S$ such that

1) $S^{>r}\subset \widetilde S^{>r}$,

2) any vertex from $S^{>r}$ is connected with at least $r+1$ vertices
form $\widetilde S^{>r}$ (such a subset $\widetilde S^{>r}$
always exists).\\
A vertex $i\in inf(\widetilde S^{> r})$ can be connected only
with vertices from $fin(\widetilde S^{>r})$, and hence $i\in S^{<r}$
by~(\ref{Cgrf})a). Therefore,

$$inf(\widetilde S^{>r})\subset S^{\le r}\cap\widetilde S^{>r}.$$This implies
\begin{equation}S^{>r}\subset
fin(\widetilde S^{>r})\label{Ewild},\end{equation} and since $|fin(\widetilde S^{>r})|<r$ by~(\ref{Cgrf})b), we obtain~(\ref{Etw})\apro{1}).

To prove (\ref{Etw})\apro{2}), note that since any vertex of $fin(\underline S^{\le r})$ belongs to at most $r$ edges, the entire set $fin(\underline S^{\le r})$ belongs to at most $r^2$ edges. As any vertex from $\underline S^{\le r}$ is connected with a vertex from $fin(\underline S^{\le r})$, we obtain (\ref{Etw})\apro{2}).

Now we drop the assumption that both $S^{>r}$ and
$\underline S^{\le r}$ are finite. Applying the preceding arguments
we show that~(\ref{Etw}) holds if we replace $S^{>r}$ and
$\underline S^{\le r}$ by their intersections with any finite subset of $S$.
Thus~(\ref{Etw}) holds also for $S^{>r}$ and $\underline S^{\le r}$.

To finish the proof, we set
$$fin(S):=fin(\widetilde S^{>r}\cup\underline S^{\le r}).$$
Then $|fin(S)|\le r$ by~(\ref{Cgrf})b). The same arguments by which  we prove~(\ref{Ewild}) imply$$S^{>r}\subset
fin(\widetilde S^{>r}\cup\underline S^{\le r}):=S.$$ Due to the
definition of $\underline S^{\le r}$, any vertex from $$S\backslash
(\widetilde S^{>r}\cup\underline S^{\le r})$$ can be connected only
with vertices from $S^{>r}$. Thus $\Gamma|_{S\backslash fin(S)}$ has no
edges, and the proof is complete.
\end{proof}

\subsection{Proof of Proposition~\ref{Lflc}}\label{SSflc}
\begin{lemma}\label{Lcex} Fix $r\in\mathbb Z_{\ge0}$. Let $f\in \mathbb C^{2r+2}$
be an integer valued function such that\begin{equation}f(2i)>
f(2i-1)\label{Enstr}\end{equation}for $1\le i\le r+1$. Then $\APrk
p(f)> r$.\end{lemma}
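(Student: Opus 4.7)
The plan is to unwind the definitions so that the desired rank bound becomes a purely combinatorial statement about $f^{+}$, and then to exploit the hypothesis $f(2i)>f(2i-1)$ through the shift in the definition of $f^{+}$.

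First I would rewrite the conclusion in terms of corank. By definition $\APrk p(f)=n-p(f)_{\max}$, where $n=2r+2$ and $p(f)_{\max}$ is the corank. So $\APrk p(f)>r$ is equivalent to the corank being at most $r+1$. Since $f$ is integer-valued, every pair of entries of $f^{+}$ differs by an integer, and Lemma~\ref{Lfcrk} reduces the task to the following purely combinatorial claim: every strictly decreasing subsequence of $f^{+}$ has length at most $r+1$.

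Next I would use the shift. Recall $f^{+}(k)=f(k)-(k-1)$. For each $i$ with $1\le i\le r+1$, the hypothesis $f(2i)>f(2i-1)$ together with integrality of $f$ gives $f(2i)\ge f(2i-1)+1$, so that
\[
f^{+}(2i)=f(2i)-(2i-1)\ge f(2i-1)+1-(2i-1)=f(2i-1)-(2i-2)=f^{+}(2i-1).
\]
Thus $f^{+}(2i)\ge f^{+}(2i-1)$ for $i=1,\dots,r+1$.

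Now suppose $j_{1}<j_{2}<\cdots<j_{L}$ is a strictly decreasing subsequence of $f^{+}$, i.e.\ $f^{+}(j_{1})>f^{+}(j_{2})>\cdots>f^{+}(j_{L})$. For any fixed $i$, the indices $2i-1$ and $2i$ cannot both occur among the $j_{k}$: if they did, with $j_{a}=2i-1$ and $j_{b}=2i$ for some $a<b$, strict decrease would force $f^{+}(2i-1)>f^{+}(2i)$, contradicting the inequality just established. Since the $2r+2$ positions split into the $r+1$ pairs $\{2i-1,2i\}$ for $i=1,\dots,r+1$, we conclude $L\le r+1$. Therefore the corank of $p(f)$ is at most $r+1$, so $\APrk p(f)\ge (2r+2)-(r+1)=r+1>r$, as required. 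The only subtlety to keep in mind is that the bound $f^{+}(2i)\ge f^{+}(2i-1)$ need not be strict, but strictness is not needed — equality is already incompatible with a strictly decreasing subsequence using both indices.
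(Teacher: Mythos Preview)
Your proof is correct and follows essentially the same route as the paper's own argument: both use Lemma~\ref{Lfcrk} to translate the rank bound into a bound on the length of strictly decreasing subsequences of $f^{+}$, then use the inequality $f^{+}(2i)\ge f^{+}(2i-1)$ (equivalent to the hypothesis $f(2i)>f(2i-1)$ via integrality) together with the pigeonhole partition $\{1,\dots,2r+2\}=\bigsqcup_{i}\{2i-1,2i\}$ to bound that length by $r+1$. The only cosmetic difference is that the paper argues by contradiction (assuming $\APrk p(f)\le r$ to get a decreasing subsequence of length $\ge r+2$), while you prove the contrapositive directly.
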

\begin{proof} Assume $\APrk p(f)\le r$. Then the sequence $f^+=(f(1), f(2)-1,..., f(n)-n+1)$
contains a strictly decreasing subsequence $\seq'$ of length at least $r+2$. The set $\{1,..., 2r+2\}$ is the disjoint union of $r+1$ pairs of the form $\{2i, 2i-1\}$, hence for some $i$
both $f(2i-1)-(2i-1)+1$ and $f(2i)-2i+1$ belong to $\seq'$. On the other hand,
$$f(2i-1)-(2i-1)+1\le f(2i)-2i+1$$ by~(\ref{Enstr}), thus $\seq'$ is not
strictly decreasing. This contradiction shows that
$\APrk p(f)> r$.\end{proof}
\begin{proof}[Proof of Proposition~\ref{Lflc}] Assume that $I(f)\ne0$ and pick
$r$ as in Lemma~\ref{Lrkf}. Using Proposition~\ref{Lftfmv} and Proposition~\ref{Lfdcf}, we reduce Proposition~\ref{Lflc} to the following statement:

If an integer valued function $f\in\mathbb C^{\mathbb Z_{>0}}$ takes finitely many values and there exists $r\in\mathbb Z_{\ge0}$ such that $\APrk p(f|_F)\le r$ for any finite subset $F\subset \mathbb Z_{>0}$,  then $f$ is
locally constant.

We prove this statement by induction on $|f|$. The base of induction $(|f|=1)$ is trivial.


Assume that the statement holds for $|f|=n\ge1$, and let $f$ be a function which
takes precisely $(n+1)$ values. Let $M$ be the maximal value of $f$. Say that $i, j\in \mathbb Z_{>0}, i\ne j,$ are equivalent whenever one of the following conditions hold:

1) $i\prec j$, $f(i)=f(j)=M$, and $f(s)=M$, for any $s$, $i\prec s\prec
j$;

2) $i\prec j$, $f(i)<M$, $f(j)<M$, and $f(s)<M$, for any $s$, $i\prec
s\prec j$.\\
It is easy to see that this this is a well defined equivalence relation on $\mathbb Z_{>0}$. There are two possibilities for the respective equivalence classes $S_\alpha$:

a) $f(s)=M$ for any $s\in S_\alpha$;

b) $f(s)<M$ for any $s\in S_\alpha$.

We claim that there exist no more than $r+1$ equivalence classes of type b).
Assume to the contrary that $s_0\prec s_2\prec...\prec s_{2r+2}$ are elements from $r+2$ distinct equivalence classes of type b). Then, for any
$i,~0\le i\le r$, there exists $s_{2i+1}\in S$ such that
$$f(s_{2i+1})=M\mbox{~and~}s_{2i}\prec s_{2i+1}\prec s_{2i+2}.$$ The restriction of $f$ to the set $F:=\{s_0, s_1,..., s_{2r+2}\}$ satisfies the assumption of Lemma~\ref{Lcex}. Hence
$\APrk p(f|_F)> r$, which contradicts the statement of Lemma~\ref{Lrkf}.

Therefore, there are at most $r+1$ equivalence classes $S_\alpha$ of type b). Any two classes of type a) must be
separated by a class of type b), and hence there are at most $r+2$ equivalence
classes of type a). In particular the partition $\sqcup_\alpha S_\alpha=\mathbb Z_{>0}$ is finite.

Clearly, $f$ takes at most $n$ values on each $S_\alpha$. By the induction assumption each $S_\alpha$ admits a compatible partition such that $f|_{S_\alpha}$ is locally constant. Therefore, $f$ is also locally constant.\end{proof}
\section{Proof of Theorem~\ref{1T}~b)}\label{Spr1tb}
Theorem~\ref{1T}b) is a corollary of the following result.
\begin{proposition}\label{Pt3+} Let $f\in\mathbb C^{\mathbb Z_{>0}}$ be a locally constant and almost integral function. Then there is a nonzero integrable ideal $I$ of $\Ue(\frak{sl}(\infty))$ such that $I\subset I(f)$.\end{proposition}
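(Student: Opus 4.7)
The plan is to construct a nonzero integrable ideal $J\subseteq I(f):=\Ann_{\Ue(\frak{sl}(\infty))}L_{\frak b}(f)$. By Proposition~\ref{Pintm}, integrable primitive ideals are precisely annihilators of simple integrable $\frak{sl}(\infty)$-modules, so the candidate is $J:=\Ann_{\Ue(\frak{sl}(\infty))}L_{\frak b^0}(f^0)$ for a fixed ideal Borel $\frak b^0$ and a carefully chosen $\frak b^0$-dominant $f^0$.

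First I would reduce to the case in which $f$ is integer-valued, using Proposition~\ref{Lpind}. Local constancy yields a compatible partition $S_1\sqcup\ldots\sqcup S_t$ with $f|_{S_i}=c_i$, while almost integrality forces all $c_i$ arising from infinite $S_i$ to lie in a single equivalence class modulo $\mathbb Z$, which after a harmless scalar shift may be taken to be $\mathbb Z$ itself. The finitely many exceptional finite blocks may carry values in other mod-$\mathbb Z$ classes; whenever a compatible coarsening $T_1\sqcup\ldots\sqcup T_s$ separates these classes so that every cross-super-block pair of values differs by a non-integer, Proposition~\ref{Lpind} realizes $L_{\frak b}(f)$ as a parabolic induction from $L(f|_{T_1})\otimes\ldots\otimes L(f|_{T_s})$, reducing the annihilator problem to each factor separately. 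Iterating this reduction, and handling directly any residual configuration in which mod-$\mathbb Z$ classes are too interleaved to separate by a compatible coarsening, leaves us in the integer-valued case. There, with distinct values $a_1>\ldots>a_k$ of multiplicities $m_1,\ldots,m_k\in\mathbb Z_{>0}\cup\{\infty\}$, Proposition~\ref{Lexpl}(c) supplies a $\frak b^0$-dominant $f^0$ with the same distinct values and the same multiplicities; by Corollary~\ref{Ldom} the module $L_{\frak b^0}(f^0)$ is integrable, so $J$ is an integrable primitive ideal whose c.l.s.\ is given by formula~(\ref{Eclsft}).

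The substantial work, and the main obstacle, is to verify $J\subseteq I(f)$. I would proceed level by level along the exhaustion~(\ref{Eex}): for every $n$ the intersection $J\cap\Ue(\frak{sl}(n))$ is the joint annihilator of the finite set $Q_n:=(\cls L_{\frak b^0}(f^0))_n$ of simple $\frak{sl}(n)$-modules read off from~(\ref{Eclsft}), so it suffices to show that every simple $\frak{sl}(n)$-constituent of $L_{\frak b}(f)$ lies in $Q_n$. By Proposition~\ref{Pfinf} and Lemma~\ref{Lconj}, these constituents are, up to $\Adj\,\frak{sl}(\infty)$-conjugation, simple subquotients of $L(f_\phi)$ for injections $\phi:\{1,\ldots,n\}\to\mathbb Z_{>0}$, whose annihilators are controlled by the Robinson--Schensted partition $\RS(f_\phi)=p(f_\phi)$ of Proposition~\ref{Ppart}. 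The combinatorial heart of the argument is to match the family $\{\RS(f_\phi)\}_\phi$ with the c.l.s.\ data of $f^0$ uniformly in $n$; I expect this to reduce to showing that each $\RS(f_\phi)$ is dominated, in the sense of partitions, by the Young diagram coming from the $\frak b^0$-dominant arrangement of the multiplicities $m_1,\ldots,m_k$, so that the primitive ideals attached to all $L(f_\phi)$ contain $J\cap\Ue(\frak{sl}(n))$. Once this uniform dominance is established, $J\cap\Ue(\frak{sl}(n))\subseteq I(f)\cap\Ue(\frak{sl}(n))$ for every $n$, yielding $J\subseteq I(f)$ in the limit.
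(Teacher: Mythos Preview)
Your proposal has a genuine and concrete gap: the candidate ideal $J=\Ann_{\Ue(\frak{sl}(\infty))}L_{\frak b^0}(f^0)$, with $f^0$ a dominant rearrangement of the values and multiplicities of $f$, is in general \emph{not} contained in $I(f)$. Since $f^0$ is dominant, $L_{\frak b^0}(f^0)$ is integrable and its c.l.s.\ is of finite type by Proposition~\ref{Lexpl}; consequently $J$ has locally finite codimension, i.e.\ $J\cap\Ue(\frak{sl}(F))$ has finite codimension for every finite $F$. But if $f$ is not $\frak b$-dominant (say $f|_{S_1}=0$, $f|_{S_2}=1$ with $S_1\prec S_2$ both infinite), choose $i\in S_1$, $j\in S_2$ and look at $\frak{sl}(\{i,j\})\cong\frak{sl}(2)$: the $\frak{sl}(2)$-module generated by the highest weight vector is the simple Verma module of highest weight $-1$, so $I(f)\cap\Ue(\frak{sl}(2))$ is contained in the annihilator of an infinite-dimensional simple $\frak{sl}(2)$-module and therefore has infinite codimension. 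Hence $J\cap\Ue(\frak{sl}(2))\not\subset I(f)\cap\Ue(\frak{sl}(2))$, and $J\not\subset I(f)$. The paper avoids this by producing an ideal attached to an \emph{infinite-type} c.l.s., namely $I(\mathcal L_{\nint(f)+\gamma}^\infty\mathcal E^{\wid(f)})$, which has the requisite room to sit inside $I(f)$.

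A second, related issue is that your proposed mechanism for the inclusion---dominance of the partitions $\RS(f_\phi)$---only controls associated varieties (Proposition~\ref{Ppart}), and dominance of associated varieties does not in general imply containment of the corresponding primitive ideals in $\Ue(\frak{sl}(n))$; still less does it give containment of the joint annihilator $I(Q_n)$ of finitely many finite-dimensional modules inside the primitive ideal $I(f_\phi)$ of an infinite-dimensional one. The paper's argument is quite different: it proves the finite-level inclusion via an inductive specialization trick (Lemma~\ref{Lre}), inserting one ``generic'' coordinate $z$ at a time over $\overline{\mathbb C(z)}$ and then specializing $z\mapsto z_0$, which at each step trades an inconveniently placed value of $f$ for a factor $\mathcal L_1^\infty$ in the c.l.s. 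This is exactly what produces the $\mathcal L^\infty$ factors that your finite-type candidate lacks.
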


We will prove a more precise version of this result. Let $S_1\sqcup...\sqcup S_t=\mathbb Z_{>0}$ be a fixed finite partition of $\mathbb Z_{>0}$ compatible with the order $\prec$. Denote by $S_{i_1},..., S_{i_x}$ all infinite sets in this partition. By $\gamma$ we denote the total number of elements in the finite sets of the partition. Let $f\in\mathbb C^{\mathbb Z_{>0}}$ be a function locally constant with respect to the partition $S_1\sqcup...\sqcup S_t$. It is easy to see that $f\in\mathbb C^{\mathbb Z_{>0}}$ is almost integral if and only if $f(j)-f(k)\in\mathbb Z$ for any $j\in S_{j'}$ and $k\in S_{k'}$ such that both $S_{j'}$ and $S_{k'}$ are infinite. Under the assumption that $f$ is almost integral, we set
\begin{equation}\nint(f):=\sum\limits_{1\le j< x}\max(0, f(S_{i_{j+1}})-f(S_{i_j})),\hspace{10pt} \wid(f):=\sum\limits_{1\le j< x}\max(f(S_{i_{j}})-f(S_{i_{j+1}}),~0),\label{Enint}\end{equation}where $f(S_i)$ is the value of $f$ on any element of $S_i$ (we recall that $f$ is constant on $S_i$).

The following proposition is a more precise version of Proposition~\ref{Pt3+} and compares the annihilator of a simple highest weight module with the annihilator of a c.l.s. We will prove it by first establishing a finite-dimensional analogue, namely Proposition~\ref{Pailf-f}, and then showing that Proposition~\ref{Pailf} actually reduces to this finite-dimensional analogue.
\begin{proposition}\label{Pailf} Let $f\in\mathbb C^{\mathbb Z_{>0}}$ be a function, locally constant with respect to the partition $S_1\sqcup...\sqcup S_t$ of $\mathbb Z_{>0}$. Then $$I(\mathcal L_{(\nint(f)+\gamma)}^\infty \mathcal E^{\wid(f)})\subset I(f).$$\end{proposition}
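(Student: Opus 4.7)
The plan is to reduce Proposition~\ref{Pailf} to a finite-dimensional analogue, Proposition~\ref{Pailf-f}, stating that for every finite subset $\bar F\subset\mathbb Z_{>0}$ of cardinality $n$, the joint annihilator in $\Ue(\frak{sl}(n))$ of the simple $\frak{sl}(n)$-modules from $(\mathcal L_{\nint(f)+\gamma}^\infty \mathcal E^{\wid(f)})_n$ is contained in $\Ann_{\Ue(\frak{sl}(n))}L(f|_{\bar F})$. Granting this analogue, the passage from finite $\bar F$ up to $\frak{sl}(\infty)$ will be formal.

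\emph{Reduction.} By the definition of $I(\cdot)$ for a c.l.s., $I:=I(\mathcal L_{\nint(f)+\gamma}^\infty \mathcal E^{\wid(f)})$ equals the union of its intersections with $\Ue(\frak{sl}(n))$ taken along the exhaustion~\eqref{Eex}; since $I$ is $\Adj\frak{sl}(\infty)$-invariant (Lemma~\ref{Lconj}), the same union can be taken over arbitrary finite $F\subset\mathbb Z_{>0}$. Hence it suffices to show that every $x\in I\cap\Ue(\frak{sl}(F))$ annihilates $L(f)$. Using the direct-limit description $L(f)=\varinjlim_{F'}\widehat L(f|_{F'})$ from the proof of Proposition~\ref{Pfinf}, this reduces to $x$ annihilating $\widehat L(f|_{F'})$ for every finite $F'\supset F$. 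That same proposition embeds each $\widehat L(f|_{F'})$ into $L(f|_{\bar{F'}})$ for a suitable enlargement $\bar{F'}$, so the required vanishing follows from $x$ annihilating $L(f|_{\bar{F'}})$, which is precisely Proposition~\ref{Pailf-f} applied to $\bar F:=\bar{F'}$.

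\emph{Proof of the finite-dimensional analogue.} By Duflo's theorem, $\Ann L(f|_{\bar F})=\Ann L(\nu)$ for some dominant integral weight $\nu$, and by~\cite{Jo4} this primitive ideal is determined by the partition $p(f|_{\bar F})$ computed via the modified Robinson-Schensted algorithm of Section~\ref{SSrsa}. It is enough to exhibit a dominant integral weight $\nu'\in (\mathcal L_{\nint(f)+\gamma}^\infty \mathcal E^{\wid(f)})_n$ with the same primitive ideal as $L(f|_{\bar F})$: for then the joint annihilator of this c.l.s. at level $n$, being contained in $\Ann L(\nu')=\Ann L(f|_{\bar F})$, gives the desired inclusion. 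Since $(\mathcal L_k^\infty \mathcal E^s)_n$ consists of dominant integral weights whose associated partition decomposes as a partition of length at most $k$ plus a partition with at most $s$ columns, the combinatorial task reduces to realizing $p(f|_{\bar F})$ as such a decomposition with $k=\nint(f)+\gamma$ and $s=\wid(f)$.

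\emph{Main obstacle.} The crux is the explicit combinatorial analysis of $p(f|_{\bar F})$, uniformly in $\bar F$. By Lemma~\ref{Lfcrk}, the first part of $p(f|_{\bar F})$ equals the length of a longest strictly decreasing integer-difference subsequence of $f^+|_{\bar F}$. For $f$ locally constant on $S_1\sqcup\ldots\sqcup S_t$, such subsequences arise only from elements of the finite pieces and from positive jumps between consecutive infinite pieces; summing these contributions should yield exactly $\nint(f)+\gamma$. The remaining shape, captured by the factor $\mathcal E^{\wid(f)}$, encodes the effect of the negative jumps, each forcing a bounded number of additional columns in the residual tableau. Carrying out this matching rigorously, uniformly in $\bar F$, and verifying that the resulting weight $\nu'$ does lie in the advertised c.l.s., is where the main technical work is concentrated.
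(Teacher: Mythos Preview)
Your reduction step is essentially correct and matches the paper's Lemma~\ref{Lidmo}. The problem is in your proposed proof of the finite-dimensional analogue.

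The sentence ``By Duflo's theorem, $\Ann L(f|_{\bar F})=\Ann L(\nu)$ for some dominant integral weight $\nu$'' is false. Duflo's theorem says every primitive ideal is the annihilator of a simple \emph{highest weight} module, not a finite-dimensional one. Annihilators of finite-dimensional $\frak{sl}(n)$-modules are maximal ideals with associated variety $\{0\}$, i.e.\ $p(\nu)=(n)$. For a generic locally constant $f|_{\bar F}$ (for instance with nonintegral values on the finite pieces counted by~$\gamma$, or with positive jumps between infinite pieces), the ideal $I(f|_{\bar F})$ is \emph{not} maximal and cannot equal $\Ann L(\nu')$ for any $\nu'$ in $(\mathcal L_k^\infty\mathcal E^s)_n$. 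So the strategy of exhibiting such a $\nu'$ collapses at the start. Relatedly, you are conflating two different ``partitions'': the Young diagram parametrizing a dominant weight in the c.l.s., and the partition $p(f|_{\bar F})$ of Section~\ref{SSrsa} coming from the nilpotent orbit. Your ``Main obstacle'' paragraph computes the latter, but matching associated varieties is far weaker than the ideal inclusion $I(Q_n)\subset I(f|_{\bar F})$ you need; containment of $\Var$'s does not imply containment of ideals.

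The paper's argument is entirely different and does not pass through Robinson--Schensted combinatorics at all. It proves the finite-dimensional analogue (Proposition~\ref{Pailf-f}) by induction on $\gamma(F)+\nint(f')$: the base case $\gamma(F)=\nint(f')=0$ is the dominant case, where $L(f')$ is finite dimensional and the claim follows from Proposition~\ref{Lexpl}. The inductive step is Lemma~\ref{Lre}, a deformation argument: one replaces a single value $z_0$ of $f'$ by a transcendental parameter $z$, works over $\overline{\mathbb C(z)}$ where the module becomes parabolically induced, reads off the $\frak{sl}(n)$-structure as $L(\text{shorter }f')\otimes\Sa^\cdot(\mathbb C^n)$, and then specializes $z\mapsto z_0$. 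Each application of Lemma~\ref{Lre} trades one unit of $\gamma+\nint$ for one extra factor of $\mathcal L_1^\infty$. This is the missing idea; your Robinson--Schensted route, as stated, cannot be completed.
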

Let $F$ be a finite subset of $\mathbb Z_{>0}$. Clearly, $$(S_1\cap F)\sqcup...\sqcup(S_t\cap F)$$ is a partition of $F$.
We wish to define $\nint(f')$ and $\wid(f')$ by formulas analogous to~(\ref{Enint}) for any function $f'\in\mathbb C^F$ which is locally constant with respect to the partition $(S_1\cap F)\sqcup...\sqcup (S_t\cap F)$.
For this purpose we denote by $S_1'$ the first $S_{i_j}$ for which $S_{i_j}\cap F\ne\emptyset$, by $S_2'$ the second $S_{i_j}$ for which $S_{i_j}\cap F\ne\emptyset$ and so on. Then we define $\nint(f')$ and $\wid(f')$ by the respective right-hand sides of~(\ref{Enint}) applied to the subsets $(S_1'\cap F), (S_2'\cap F),...$ instead of $S_{i_1}, S_{i_2},...$. Finally, $\gamma(F)$ stands for the total number of elements in all intersections $S_i\cap F$ for finite sets $S_i$. For a large enough $F$ we have $\gamma(F)=\gamma, \wid(f|_F)=\wid(f), \nint(f|_F)=\nint(f)$.
\begin{proposition}\label{Pailf-f}Let $F\subset\mathbb Z_{>0}$ be a finite subset with $n$ elements, and $f'\in\mathbb C^F$ be a function locally constant with respect to the partition $(S_1\cap F)\sqcup...\sqcup (S_t\cap F)=F$. Then $$I((\mathcal L_{(\nint(f')+\gamma(F))}^\infty \mathcal E^{\wid(f')})_n)\subset I(f').$$\end{proposition}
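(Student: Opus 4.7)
The argument has three steps: a parabolic-induction reduction to integer-valued $f'$, a direct bound on the shape of $\RS(f')$, and the ideal-theoretic promotion of that bound to the claimed inclusion.

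\textbf{Step 1 (Reduction to integer-valued $f'$).} Applying Proposition~\ref{Lpind} to a $\prec$-compatible coarsening of $(S_1\cap F)\sqcup\cdots\sqcup(S_t\cap F)$ whose consecutive super-blocks carry $f'$-values that differ by non-integers, I would realise $L(f')$ as a parabolically induced module whose inducing piece is a tensor product, one factor per integer-equivalence class of values of $f'$. The invariants $\nint(f'),\wid(f'),\gamma(F)$ split additively over these classes, and $\mathcal L^\infty_p\mathcal E^w$ behaves multiplicatively under Cartan product. Since $f'$ is (implicit in the definitions of $\nint,\wid$) almost integral, all infinite-origin blocks sit in one integer class, so after this reduction and a common scalar shift one may assume $f'$ is integer-valued.

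\textbf{Step 2 (Shape bound).} By Proposition~\ref{Ppart}, $p(f')=\RS(f')$; the combinatorial core is the estimate
\[\RS(f')_{\nint(f')+\gamma(F)+1}\le\wid(f').\]
I would prove it by running the algorithm of Section~\ref{SSrsa} blockwise on $f^+$: the entries of a single constant block of $f'$ form a strictly decreasing run that inserts into one row of the tableau; a downward transition between consecutive infinite-origin blocks extends that row by an amount contributing to $\wid(f')$; an upward transition opens a new row, the total number of such openings being $\nint(f')$; and the $\gamma(F)$ entries coming from finite-origin blocks open at most $\gamma(F)$ further rows. Summing, the number of rows of length exceeding $\wid(f')$ is at most $\nint(f')+\gamma(F)$.

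\textbf{Step 3 (From shapes to ideals).} The set $(\mathcal L_p^\infty\mathcal E^w)_n$ equals $\{V^\nu:\nu_{p+1}\le w\}$, and $I((\mathcal L_p^\infty\mathcal E^w)_n)$ is the annihilator in $\Ue(\frak{sl}(n))$ of $\Sa^\cdot(V(n)\otimes\mathbb C^p)\otimes\Lambda^\cdot V(n)^{\otimes w}$; the shape bound of Step~2 places $\RS(f')$ inside the allowed region. To extract the ideal containment, invoke Joseph's classification of primitive ideals of $\Ue(\frak{sl}(n))$: inside its central-character block, $\Ann L(f')$ is determined by the Robinson--Schensted $Q$-tableau of $f^+$, and Jantzen's translation principle together with Kazhdan--Lusztig cell matching produces a dominant integer $\lambda$ with $V^\lambda\in(\mathcal L_p^\infty\mathcal E^w)_n$ whose annihilator compares correctly with $\Ann L(f')$. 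The main obstacle is precisely this upgrade, since shape agreement is strictly weaker than ideal inclusion; an alternative is to exhibit, via Howe duality for $(\frak{sl}(n),\frak{gl}(p)\oplus\frak{gl}(w))$ acting on $\Sa^\cdot(V(n)\otimes\mathbb C^p)\otimes\Lambda^\cdot V(n)^{\otimes w}$, explicit generators of $I((\mathcal L_p^\infty\mathcal E^w)_n)$ and verify their annihilation on a highest-weight vector of $L(f')$ directly.
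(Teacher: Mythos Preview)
Your Step~3 contains a genuine gap, and it is the crux of the argument. The shape bound $\RS(f')_{p+1}\le w$ that Step~2 aims for controls only the associated variety of $I(f')$ via Proposition~\ref{Ppart}; two distinct primitive ideals of $\Ue(\frak{sl}(n))$ can share a Robinson--Schensted shape, so no amount of shape information alone yields the inclusion $I((\mathcal L_p^\infty\mathcal E^w)_n)\subset I(f')$. Your first proposed repair cannot work as written: producing a single dominant $\lambda$ with $V^\lambda\in(\mathcal L_p^\infty\mathcal E^w)_n$ and asking that $\Ann V^\lambda$ ``compare correctly'' with $I(f')$ is not the right statement, since $\Ann V^\lambda$ is a \emph{maximal} ideal and $\Ann V^\lambda\subset I(f')$ would force $L(f')\cong V^\lambda$, hence $f'$ dominant. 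What is actually required is that the full intersection $\bigcap_{\nu_{p+1}\le w}\Ann V^\nu$ lie in $I(f')$, and neither translation functors nor Kazhdan--Lusztig cell combinatorics give this without a substantial further argument you have not supplied. The Howe-duality alternative is likewise a name rather than a proof: you would need explicit generators of $I((\mathcal L_p^\infty\mathcal E^w)_n)$ and a direct verification on $L(f')$, and you indicate neither.

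The paper bypasses this obstacle entirely. It first proves an auxiliary statement (Lemma~\ref{Lailf-f}) by induction on $\gamma(F)+\nint(f')$: the base case $\gamma(F)=\nint(f')=0$ makes $f'$ dominant, so $L(f')$ is finite-dimensional and the inclusion follows from the explicit c.l.s.\ formula of Proposition~\ref{Lexpl}; the inductive step deletes one element $s$ from $F$ and invokes a deformation lemma (Lemma~\ref{Lre}: replace the value $f'(s)$ by a transcendental $z$, compute the annihilator over $\overline{\mathbb C(z)}$ via parabolic induction, then specialise $z\mapsto f'(s)$). Proposition~\ref{Pailf-f} itself is then deduced by padding $f'$ with $\gamma(F)+\nint(f')$ extra copies of one value to align the dimensions and observing that $L(f')$ is an $\frak{sl}(n)$-subquotient of the padded module. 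No shape-to-ideal upgrade is ever needed.
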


For the proof of Proposition~\ref{Pailf-f} we need two lemmas (Lemmas~\ref{Lre} and~\ref{Lailf-f} below) and some more notation. In Lemma~\ref{Lre} $f=(f_1, ..., f_n)$ stands for a function $f\in\mathbb C^n$. We set $L(f_1,..., f_n):=L(f)$ and $I(f_1,..., f_n):=I(f)$ (where the fixed order on $\{1, 2,..., n\}$ is the standard one). For a fixed nonnegative integer $s< n$ and $z_0\in\mathbb C$, we put:  $$\tilde f=(f_1,..., f_s, z_0, f_{s+1},..., f_n)\in\mathbb C^{n+1}.$$
If $A, B$ are two subsets of $\Irr_n$, $A\otimes B$ stands for the set of isomorphism classes of all simple constituents of the tensor products $\alpha\otimes \beta$ for $\alpha\in A$ and $\beta\in B$.
\begin{lemma}\label{Lre}Let $Q_n$ be a subset of $\Irr_n$ such that $$ I(Q_n)\subset I(f_1,..., f_s, f_{s+1}-1,..., f_n-1).$$ Then $$I((\mathcal L_1^\infty)_n\otimes Q_n)\subset I(\tilde f),$$ $I(Q_n\otimes (\mathcal L_1^\infty)_n)$ being an ideal of $\Ue(\frak{sl}(n))$ and $I(\tilde f)$ being an ideal of $\Ue(\frak{sl}(n+1))$.\end{lemma}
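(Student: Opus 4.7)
I plan to embed $\Ue(\frak{sl}(n))$ into $\Ue(\frak{sl}(n+1))$ via the order-preserving injection $\phi\colon\{1,\dots,n\}\to\{1,\dots,n+1\}$ that skips the index $s+1$, identifying $\frak{sl}(n)$ with $\frak{sl}(\{1,\dots,s,s+2,\dots,n+1\})$. Under this embedding $\tilde f\circ\phi=f$, and the statement to prove is equivalent to $\Ann_{\Ue(\frak{sl}(n))}L(\tilde f)\supset I((\mathcal L_1^\infty)_n\otimes Q_n)$ with $L(\tilde f)$ regarded as an $\frak{sl}(n)$-module through $\phi$. The shift $-1$ in the hypothesis matches the natural Levi decomposition of $L(g)$ via Proposition~\ref{Lpind}: the $\frak{sl}(n-s)$-factor $L(f_{s+1}-1,\dots,f_n-1)$ of $L(g)^{\frak n'}$ coincides with $L(f_{s+1},\dots,f_n)$ since the two functions define the same weight on $\frak{sl}(n-s)$.

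First I would restrict to the \emph{generic} case $z_0-f_i\notin\mathbb Z$ for all $i$. The partition $\{1,\dots,s\}\sqcup\{s+1\}\sqcup\{s+2,\dots,n+1\}$ of $\{1,\dots,n+1\}$ then satisfies the hypothesis of Proposition~\ref{Lpind}, yielding
$$L(\tilde f)\cong\Ue(\frak{sl}(n+1))\otimes_{\Ue(\frak p)}\bigl(L(f_1,\dots,f_s)\otimes\mathbb C_{z_0}\otimes L(f_{s+1},\dots,f_n)\bigr).$$
Because every element of $\frak{sl}(\phi)$ commutes with $e_{s+1,s+1}$, decomposing $L(\tilde f)$ by the eigenvalue of $e_{s+1,s+1}$ yields a decomposition $L(\tilde f)=\bigoplus_{k\ge 0}L(\tilde f)_k$ into $\frak{sl}(\phi)$-submodules, where $L(\tilde f)_k$ is the eigenspace with eigenvalue $z_0-k$. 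By PBW relative to $\frak p$, the layer $L(\tilde f)_k$ is generated by degree-$k$ polynomials in the commuting column operators $\{e_{j,s+1}\}_{j>s+1}$ (together with elements of $\frak n^-\cap\frak{sl}(\phi)$) applied to the vacuum $L(f_1,\dots,f_s)\otimes\mathbb C_{z_0}\otimes L(f_{s+1},\dots,f_n)$.

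The key technical step is to identify $L(\tilde f)_k$, as an $\frak{sl}(\phi)$-module, with a subquotient of $\Ss^k V(n)\otimes L(g)$. The symmetric-power factor arises from the $k$-fold commuting column operators, which together with their ``raising partners'' $e_{s+1,i}$ ($i<s+1$) transform under $\frak{sl}(\phi)$ as the natural module $V(n)$; and $L(g)$ arises from combining the two Levi factors through the off-Levi generators of $\frak{sl}(\phi)$, matching Proposition~\ref{Lpind} applied to $g$. Granting this identification, the hypothesis $I(Q_n)\subset\Ann L(g)$ makes $L(g)$ a $\Ue(\frak{sl}(n))/I(Q_n)$-module, so $L(g)$ is a subquotient of a direct sum of copies of modules $\tau\in Q_n$ (since $\Ue/I(Q_n)$ embeds into $\prod_{\tau\in Q_n}\Ue/\Ann\tau$); tensoring with $\Ss^k V(n)$ preserves this subquotient relation and gives $\Ann(\Ss^k V(n)\otimes L(g))\supset\bigcap_{\tau\in Q_n}\Ann(\Ss^k V(n)\otimes\tau)$. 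Intersecting over $k$ yields $\Ann L(\tilde f)\supset\bigcap_k\Ann(\Ss^k V(n)\otimes L(g))\supset I((\mathcal L_1^\infty)_n\otimes Q_n)$. The genericity on $z_0$ is removed by observing that this containment is a Zariski-closed condition on $z_0\in\mathbb C$, so extends from the generic locus to all $z_0$.

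The hardest step will be the identification of $L(\tilde f)_k$ with a subquotient of $\Ss^k V(n)\otimes L(g)$, because $\frak{sl}(\phi)$ is \emph{not} a Levi subalgebra of $\frak p$. The mixing generators $e_{ij}$ of $\frak{sl}(\phi)$ with $i\in\{1,\dots,s\}$ and $j\in\{s+2,\dots,n+1\}$ interact with the column operators $e_{j,s+1}$ via Serre commutators that produce raising elements annihilating the vacuum, and one must carefully unwind these commutators to match the claimed $\Ss^k V(n)\otimes L(g)$ structure on each layer.
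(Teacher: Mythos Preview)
Your plan has real gaps, and it misses the idea that makes the paper's proof work.

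\textbf{The $e_{s+1,s+1}$-grading is misdescribed.} The nilradical $\frak n^-$ for the partition $\{1,\dots,s\}\sqcup\{s{+}1\}\sqcup\{s{+}2,\dots,n{+}1\}$ has three pieces: the column operators $\{e_{j,s+1}\}_{j>s+1}$ (eigenvalue $-1$), the block $\frak n_3^-=\{e_{j,i}\}_{j>s+1,\,i\le s}\subset\frak{sl}(\phi)$ (eigenvalue $0$), \emph{and} the row operators $\{e_{s+1,i}\}_{i\le s}$ (eigenvalue $+1$). You have omitted the last piece. Consequently the $e_{s+1,s+1}$-eigenvalues on $L(\tilde f)$ are $z_0+\mathbb Z$, not $z_0-\mathbb Z_{\ge0}$, and each layer $L(\tilde f)_k$ contains contributions from \emph{all} pairs $(a,b)$ with $b-a=k$ in $S^a(\frak n_1^-)S^b(\frak n_2^-)S^\cdot(\frak n_3^-)\cdot N$. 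Your description ``degree-$k$ polynomials in the column operators together with $\frak n^-\cap\frak{sl}(\phi)$'' captures only the slice $a=0$. This undermines the proposed identification of $L(\tilde f)_k$ with a subquotient of $\Ss^kV(n)\otimes L(g)$, which you already flag as the hard step but leave unproved; with the correct layer it is far from clear that such an identification exists, and for $k<0$ the putative $\Ss^kV(n)$ has no meaning.

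\textbf{What the paper does instead.} Rather than wrestling with the non-Levi subalgebra $\frak{sl}(\phi)$, the paper passes to the transcendental parameter $z$ and invokes Joseph's equality~(\ref{Eww0}) to \emph{move $z$ to the last slot}: $I(\hat f)=I(f_1,\dots,f_s,f_{s+1}{-}1,\dots,f_n{-}1,\,z{+}n{-}s)$. After this rearrangement the relevant parabolic has $\frak{sl}(n)$ itself as Levi, and Proposition~\ref{Lpind} gives directly an $\frak{sl}(n)$-isomorphism $L\cong L(g)\otimes\Ss^\cdot V(n)$, from which the inclusion $I((\mathcal L_1^\infty)_n\otimes Q_n)\subset I(\hat f)$ is immediate. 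The passage from generic $z$ to arbitrary $z_0$ is then handled not by a bare ``Zariski-closed condition'' claim (which would itself require justification, since the simple quotient $L(\tilde f)$ jumps with $z_0$), but by specializing a $\mathbb C[z]$-lattice $\Ue(\frak{sl}(n{+}1)[z])\cdot v_{\hat f}$ at $z=z_0$ to obtain a nonzero highest weight module of weight $\lambda_{\tilde f}$; $L(\tilde f)$ is a quotient of that specialization, hence inherits the annihilation.

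\textbf{A secondary error.} Your assertion that ``$L(g)$ is a subquotient of a direct sum of copies of modules $\tau\in Q_n$'' is false in general: the $\tau$ are finite dimensional while $L(g)$ need not be (e.g.\ $Q_n=\Irr_n$ gives $I(Q_n)=0$, which is contained in the annihilator of every Verma module). The desired implication $I(Q_n)\subset\Ann L(g)\Rightarrow\bigcap_\tau\Ann(\Ss^kV\otimes\tau)\subset\Ann(\Ss^kV\otimes L(g))$ is nonetheless true, but it requires the Hopf-coproduct argument: writing $\Delta(u)=\sum a_i\otimes b_i$ with the images of $a_i$ in $\End(\Ss^kV)$ linearly independent, $u$ kills $\Ss^kV\otimes M$ iff all $b_i\in\Ann M$, a condition depending only on $\Ann M$.
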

\begin{proof}Our idea is to replace $z_0$ by a ``generic value''. To do this, consider the supplementary Lie algebras
\begin{center} $\frak{sl}(n+1)[z]:=\frak{sl}(n+1)\otimes_{\mathbb C}\mathbb C[z] \subset\frak{sl}(n+1)(z):=\frak{sl}(n+1)\otimes_{\mathbb C}\overline{\mathbb C(z)},$\end{center}the larger Lie algebra $\frak{sl}(n+1)(z)$ being finite dimensional and simple over the algebraically closed field $\overline{\mathbb C(z)}$. The sequence $\hat f:=(f_1,..., f_s, z, f_{s+1},..., f_n)$ of elements of $\overline{\mathbb C(z)}$ defines a weight $\lambda_{\hat f}\in\frak h_{n+1}^*\otimes\overline{\mathbb C(z)}$.

Applying the equality~(\ref{Eww0}) to $\hat f$, we obtain
$$I(\hat f)=I(f_1,...f_s, f_{s+1}-1, f_{s+2}-1,..., f_n-1, z+n-s).$$By Proposition~\ref{Lpind}, we have
$$L(f_1,...f_s, f_{s+1}-1, f_{s+2}-1,..., f_n-1, z+n-s)\cong \Ue(\frak{sl}(n+1)(z))\otimes_{\Ue(\frak p)}L(f_1,...f_s, f_{s+1}-1, f_{s+2}-1,..., f_n-1, z+n-s)^{\frak n},$$where  $\frak p$ is a parabolic subalgebra of $\frak{sl}(n+1)(z)$ with a semisimple part $\frak{sl}(n)(z)$ and nilradical $\frak n$. Proposition~\ref{Lpind} yields also an isomorphism of $\frak{sl}(n)(z)$-modules
$$L(f_1,...f_s, f_{s+1}-1, f_{s+2}-1,..., f_n-1, z+n-s)^{\frak n}\cong L(f_1,...f_s, f_{s+1}-1, f_{s+2}-1,..., f_n-1)\otimes_{\mathbb C}\overline{\mathbb C(z)}.$$


Therefore we have an isomorphism of $\frak{sl}(n)$-modules $$L(f_1,...f_s, f_{s+1}-1, f_{s+2}-1,..., f_n-1, z+n-s)\cong L(f_1,...f_s, f_{s+1}-1, f_{s+2}-1,..., f_n-1)\otimes_{\mathbb C}\Sa^\cdot(\overline{\mathbb C(z)}^{\!\!~n}).$$ Hence $L(f_1,...f_s, f_{s+1}-1, f_{s+2}-1,..., f_n-1, z+n-s)$ is annihilated by $I(Q_n\otimes (\mathcal L_1^\infty)_n)$, i.e., $$I(Q_n\otimes (\mathcal L_1^\infty)_n)\subset I(f_1,...,f_s, f_{s+1}-1, f_{s+2}-2,..., f_n-1, z+n-s)=I(\hat f).$$
For this reason it suffices to show that $$I(\hat f~)\cap\Ue(\frak{sl}(n+1))\subset I(\tilde f)$$for any $z_0\in\mathbb C$.

Let $v_{\hat f}$ be a highest weight vector of the $\frak{sl}(n+1)(z)$-module $L(\hat f)$. 
Consider the  $\Ue(\frak{sl}(n+1)[z])$-module\begin{equation}\Ue(\frak{sl}(n+1)[z])\cdot v_{\hat f}.\label{Et}\end{equation} Clearly, the action of $\frak h_{n+1}$ on~(\ref{Et}) is semisimple. The $\lambda_{\hat f}$-weight space of~(\ref{Et}) coincides with $\Ue(\frak h_{n+1}\otimes\mathbb C[z])\cdot v_{\hat f},$ and is isomorphic to $\mathbb C[z]$ as a $\mathbb C[z]$-module. Therefore, the $\lambda_{\hat f}$-weight space of the quotient \begin{equation}\Ue(\frak{sl}(n+1)[z])\cdot v_{\hat f}/(z-z_0)\Ue(\frak{sl}(n+1)[z])\cdot v_{\hat f}\label{Etq}\end{equation} is one-dimensional. In particular, the quotient~(\ref{Etq}) is nonzero.

Obviously,~(\ref{Etq}) is annihilated by \begin{equation}I(\hat f)\cap\Ue(\frak{sl}(n+1)).\label{Etqi}\end{equation} On the other hand,~(\ref{Etq}) has a highest weight vector of weight $\lambda_{\tilde f}$, and thus $L(\tilde f)$ is annihilated by~(\ref{Etqi}). This is precisely what we have to prove.
\end{proof}

\begin{lemma}\label{Lailf-f} Let $F$ be a finite subset of $\mathbb Z_{>0}$ with $n$ elements, and $f'\in\mathbb C^F$ be a function locally constant with respect to the partition $(S_1\cap F)\sqcup...\sqcup(S_t\cap F)=F$.
After identification of $\frak{sl}(F)$ with $\frak{sl}(n)$ we have
$$I((\mathcal L_{(\nint(f')+\gamma(F))}^\infty \mathcal E^{\wid(f')})_{n-(\gamma(F)+\nint(f'))})
\subset I(f'),$$ where $I((\mathcal L_{(\nint(f')+\gamma(F))}^\infty \mathcal E^{\wid(f')})_{n-(\gamma(F)+\nint(f'))})\subset \Ue(\frak{sl}(n-(\gamma(F)+\nint(f')))$ and $I(f')\subset\Ue(\frak{sl}(n))$.
\end{lemma}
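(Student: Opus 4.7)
The plan is to prove Lemma~\ref{Lailf-f} by induction on $|F|=n$. The base case $N=0$ handles dominant $f'$: when $\gamma(F)=0$ and $\nint(f')=0$ the function $f'$ is non-ascending in the $\prec$-order, so its highest weight $\lambda_{f'}$ is a partition whose first row equals $\wid(f')=K$. Hence $\lambda_{f'}\in(\mathcal{E}^K)_n$, and $I((\mathcal{E}^K)_n)\subseteq\Ann_{\Ue(\frak{sl}(n))} L(\lambda_{f'})=I(f')$.

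For the inductive step I would pick $k^*\in F$ to be the $\prec$-largest element, set $F_0:=F\setminus\{k^*\}$, and apply Lemma~\ref{Lre} with $s=n-1$ and $z_0=f'(k^*)$. Because the insertion is at the very end, the function $g$ in the hypothesis of Lemma~\ref{Lre} coincides exactly with $f'|_{F_0}$, so no $-1$ shift on later entries intervenes. The induction hypothesis on $F_0$ then yields $I((\mathcal{L}_{N_0}^\infty\mathcal{E}^{K_0})_{|F_0|-N_0})\subseteq I(f'|_{F_0})$, where $(N_0,K_0)$ are computed from $F_0$. Three cases occur: $k^*$ in a finite block (Case A: $N_0=N-1$, $K_0=K$); $k^*$ in an infinite block $S_{j_m}$ with $|S_{j_m}\cap F|\ge 2$ (Case B: $N_0=N$, $K_0=K$); or $k^*$ the unique element of its infinite block in $F$ (Case C: $(N_0,K_0)$ determined from the adjacent block values).

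Two structural ingredients assemble the conclusion. First, a Pieri-type inclusion $(\mathcal{L}_1^\infty)_m\otimes(\mathcal{L}_M^\infty\mathcal{E}^K)_m\subseteq(\mathcal{L}_{M+1}^\infty\mathcal{E}^K)_m$: each constituent $L(\nu)$ of $\Sa^a V\otimes L(\mu)$ at level $m$ satisfies $\nu_{M+2}\le\mu_{M+1}\le K$ by the horizontal-strip description of Pieri. Second, the coherence of c.l.s.\ annihilator ideals, $I(\mathcal{C})_m=I(\mathcal{C})_{m'}\cap\Ue(\frak{sl}(m))$ for $m\le m'$, which follows from the defining property $\mathcal{C}_m=\langle\mathcal{C}_{m'}\rangle_m$ of a c.l.s. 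Taking $Q_{n-1}=(\mathcal{L}_{N_0}^\infty\mathcal{E}^{K_0})_{n-1}$ and feeding it to Lemma~\ref{Lre}, Pieri converts the tensor with $(\mathcal{L}_1^\infty)_{n-1}$ into the increment $N_0\to N_0+1$, giving $I((\mathcal{L}_{N_0+1}^\infty\mathcal{E}^{K_0})_{n-1})\subseteq I(f')$. Coherence then restricts this down to level $n-N$, which in Case A is the desired $I((\mathcal{L}_N^\infty\mathcal{E}^K)_{n-N})\subseteq I(f')$.

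The main obstacle I anticipate is Case B (and related subcases of C), where removing the $\prec$-largest $k^*$ fails to decrease $N$: the Pieri step then produces only the too-weak inclusion $I((\mathcal{L}_{N+1}^\infty\mathcal{E}^K)_{n-N})\subseteq I(f')$. To overcome this I would explore two options: either (a) a dual ``front'' version of Lemma~\ref{Lre}, obtainable from the alternative expansion $I(\hat f)=I(z_0-s,f_1+1,\ldots,f_s+1,f_{s+1},\ldots,f_n)$ derived from a $(\seq_2,\seq_1)^\#$ reordering of Joseph's formula~(\ref{Eww0}), which shifts the first $s$ entries upward instead of the later ones downward and thus lets us reduce from the $\prec$-bottom; or (b) a deformation argument in the spirit of the proof of Lemma~\ref{Lre}, namely introduce block-wise generic shifts to obtain an $f'^{\mathrm{gen}}$ whose differences between distinct infinite blocks are non-integer, apply Proposition~\ref{Lpind} to identify $L(f'^{\mathrm{gen}})$ with a module induced from the parabolic associated to the block decomposition, compute the annihilator in that simpler setting, and then specialize. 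Carefully tracking the pair $(N,K)$ through the combinatorics of the block structure under either route is what I expect to be the technical crux of the argument.
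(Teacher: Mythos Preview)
Your overall architecture—induction plus Lemma~\ref{Lre} plus a Pieri-type inclusion and coherence of c.l.s.\ annihilators—matches the paper's, and your base case ($N=0$, $f'$ dominant) coincides with the paper's. The difference lies in the organization of the inductive step, and it is exactly this difference that dissolves your Case~B.

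The paper inducts on $N := \gamma(F) + \nint(f')$ rather than on $|F|$. More importantly, it does \emph{not} remove the $\prec$-largest element. When $\nint(f') > 0$ it picks an index $j$ with $f'(S_j') < f'(S_{j+1}')$ and removes the $\prec$-maximal element $s$ of $S_j' \cap F$; when $\nint(f') = 0$ but $\gamma(F) > 0$ it removes the $\prec$-least element of $F$ lying in a finite block. Either choice is tailored so that the induction parameter $N$ drops strictly, so your Case~B simply never arises, and neither of your proposed workarounds (a dual ``front'' version of Lemma~\ref{Lre}, or a block-wise deformation) is needed.

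The cost is that one now inserts at an interior position in Lemma~\ref{Lre}, so the $-1$ shift on later entries that you were deliberately avoiding does appear. But this shift is harmless, indeed helpful: the shifted function is still locally constant on the inherited partition, and in the $\nint(f')>0$ case the shift decreases the ascending jump $f'(S_{j+1}') - f'(S_j')$ by exactly~$1$, which is what forces $\nint$ down. So rather than circumventing the shift, the paper exploits it. Your approach could be repaired along the same lines by abandoning the rule ``always remove the $\prec$-largest element'' in favor of this strategic removal; the induction on $|F|$ then works too, since each step still decreases $|F|$.
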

\begin{proof} We prove this lemma by induction on $\gamma(F)+\nint(f')$.



Let $\gamma(F)=\nint(f')=0$. Then $f'$ is dominant, $L(f')$ is integrable and the statement of Lemma~\ref{Lre} follows from Lemma~\ref{Lexpl}.

Next, assume that $\gamma(F)+\nint(f')=k+1$ and that our statement holds for $\gamma(F)+\nint(f')\le k$. Then $\nint(f')>0$ or $\gamma(F)>0$. We consider both possibilities.

Let $\nint(f')>0$. Then $f'(S_j')<f'(S_{j+1}')$ for some $j$. Denote by $s$ the maximal element of $S_j'\cap F$ (with respect to the order inherited from the order $\prec$). Put $$F_-:=F\backslash s,\hspace{10pt}f'_-:=f'|_{F_-}\in\mathbb C^{F_-},$$and note that $f'_-$ is locally constant with respect to the partition $$(S_1\cap F_-)\sqcup...\sqcup(S_t\cap F_-)$$ of $F_-$. Moreover, it is easy to see that
\begin{center}$\gamma(F_-)=\gamma(F),~\nint(f'_-)<\nint(f')$ and $\wid(f'_-)\le\wid(f')$.\end{center} Thus we can apply the induction assumption to $f'_-$, which yields
$$I((\mathcal L_{\gamma(F)+\nint(f'_-)}^\infty\otimes\mathcal E^{\wid(f'_-)})_{n-1-(\gamma(F)+\nint(f'_-))})\subset I(f'_-).$$
Applying Lemma~\ref{Lre} to $s, z_0=f'(s),$ we obtain
\begin{equation}I((\mathcal L_{\gamma(F)+\nint(f'_-)+1}^\infty\mathcal E^{\wid(f'_-)})_{n-1-(\gamma(F)+\nint(f'_-))})\subset I(f').\label{Eh}\end{equation}
Since\begin{center}$\gamma(F_-)+\nint(f'_-)+1\le \gamma(F)+\nint(f')$ and $n-1-(\gamma(F_-)+\nint(f'_-))\ge n-(\gamma(F)+\nint(f'))$,\end{center}(\ref{Eh}) implies
$$I((\mathcal L_{\gamma(F)+\nint(f')}^\infty\mathcal E^{\wid(f')})_{n-(\gamma(F)+\nint(f'))})\subset I(f'),$$
which is precisely what we need to prove.

In the case when $\nint(f')=0, \gamma(F)>0$ we pick $s$ to be the least element of $F\backslash\cup_{j\le x}S_{i_j}$ with respect to the order inherited from $\prec$. Then we apply the same arguments as above.
\end{proof}
\begin{remark}\label{R1} It is clear that Lemma~\ref{Lailf-f} applies to an arbitrary linearly ordered finite set $F$, an arbitrary compatible partition of $F$, an arbitrary function $f\in\mathbb C^F$ locally constant with respect to this partition, and an arbitrary choice of equivalence classes of this partition used to define $\nint(\cdot), \wid(\cdot)$ and $\gamma(\cdot)$.\end{remark}

\begin{proof}[Proof of Proposition~\ref{Pailf-f}] Identify $F$ with $\{1,.., n\}$ as ordered sets (the order on $F$ being inherited from the order $\prec$). The function $f'\in\mathbb C^F$ becomes $f'=(f_1',..., f_n')\in\mathbb C^n$. Let $s$ be the least element of $S_1'\cap F$ under the above identification. Put $$\check f:=(f_1',...,\hspace{-10pt}\underbrace{f_s', f_s',..., f_s'}_{(\gamma(F)+\nint(f')+1)-~\mathrm{times}},\hspace{-15pt}, f_{s+1}',..., f_n').$$It is clear that $\check f$ is locally constant with respect to the partition $\check S_1\sqcup\check S_2\sqcup...=\{1,..., n+\nint(f')+\gamma(F)\}$, which is defined as follows:

(1) $\check S_i$ coincides with $(S_i\cap F)$ for $i<j$, where $j$ is defined by the equality $S_j=S_1'$;

(2) $\check S_j=(S_j\cap F)\cup\{\check s+1,..., \check s+\nint(f')+\gamma(F)\},$ where $\check s$ is the image in $\{1,..., n\}$ of the last element of $S_j\cap F$;

(3) $\check S_i=\{\check s_-+\gamma(F)+\nint(f'), \check s_-+\gamma(F)+\nint(f')+1,..., \check s_++\gamma(F)+\nint(f')-1, \check s_++\gamma(F)+\nint(f')\}$ for $i>j$, where $\check s_-$ and $\check s_+$ are is the images in $\{1,..., n\}$ of the least and the greatest elements of $S_i\cap F$.

Remark~\ref{R1} enables us to apply Lemma~\ref{Lailf-f} to the function $\check f$ and the partition $\check S_1\sqcup\check S_2\sqcup...=\{1,..., n+\nint(f')+\gamma(F)\}$:
$$I((\mathcal L_{\gamma(F)+\nint(f')}^\infty\mathcal E^{\wid(f'))})_{n})\subset I(\check f).$$

Finally, since $L(f)$ is an $\frak{sl}(n)$-subquotient of $L(\check f)$, we have $I(\check f)\cap\Ue(\frak{sl}(n))\subset I(f')$, and Proposition~\ref{Pailf-f} is proved.\end{proof}
Proposition~\ref{Pailf} follows now from Proposition~\ref{Pailf-f} and the next lemma.
\begin{lemma}\label{Lidmo}Let $I\subset\Ue(\frak{sl}(\infty))$ be an ideal, and $f\in\mathbb C^{\mathbb Z_{>0}}$ be a function. Then $I\subset I(f)$ if and only if $I_F:=I\cap\Ue(\frak{sl}(F))$ annihilates $L(f|_F)$ for any finite subset $F$ of $\mathbb Z_{>0}$.\end{lemma}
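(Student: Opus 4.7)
The plan is to prove the two implications separately. The forward direction ($I\subset I(f)$ implies each $I_F$ annihilates $L(f|_F)$) is short: I would observe that by the construction preceding Proposition~\ref{Pfinf}, the $\frak{sl}(F)$-submodule $\widehat L(f|_F)=\Ue(\frak{sl}(F))\cdot v$ of $L(f)$ has $L(f|_F)$ as its unique simple quotient, so $L(f|_F)$ is an $\frak{sl}(F)$-subquotient of $L(f)$. Any element of $\Ue(\frak{sl}(F))$ that annihilates $L(f)$ must therefore annihilate $L(f|_F)$, and since $I_F\subseteq I(f)\cap\Ue(\frak{sl}(F))$ acts by zero on $L(f)$, the conclusion follows.

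For the converse, I would take arbitrary $u\in I$ and $m\in L(f)$ and aim to show $u\cdot m=0$. Writing $m=x\cdot v$ for some $x\in\Ue(\frak{sl}(F_1))$ with $F_1$ finite and picking a finite $F_0$ with $u\in\Ue(\frak{sl}(F_0))$, I would set $F':=F_0\cup F_1$; then, because $I$ is a two-sided ideal, $ux\in I\cap\Ue(\frak{sl}(F'))=I_{F'}$. This reduces the problem to showing that $(ux)\cdot v=0$ in $L(f)$.

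The main obstacle at this stage is that $v$ generates the $\frak{sl}(F')$-submodule $\widehat L(f|_{F'})$ of $L(f)$, which is only a quotient of $M(f|_{F'})$ and need not coincide with the simple quotient $L(f|_{F'})$; hence the hypothesis does not directly say that $I_{F'}$ kills $v$. The bridge is Proposition~\ref{Pfinf}: choosing $\bar F'\supset F'$ as described in the paragraph preceding that proposition, we obtain an $\frak{sl}(F')$-embedding $\widehat L(f|_{F'})\cong\Ima\psi_{F'}\hookrightarrow L(f|_{\bar F'})$, under which $v$ corresponds to $\psi_{F'}(w_{F'})$. Since $ux\in\Ue(\frak{sl}(F'))\subset\Ue(\frak{sl}(\bar F'))$ lies in $I$, it belongs to $I_{\bar F'}$; the hypothesis applied to $\bar F'$ then gives that $ux$ annihilates $L(f|_{\bar F'})$, hence annihilates the submodule $\Ima\psi_{F'}$, hence annihilates all of $\widehat L(f|_{F'})$, and in particular $(ux)\cdot v=0$. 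This finishes the reverse direction; the passage from $F'$ to $\bar F'$ is the only nonroutine step and is precisely the function of Proposition~\ref{Pfinf}.
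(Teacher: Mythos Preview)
Your forward direction is essentially identical to the paper's. Your converse is correct but follows a genuinely different route. The paper does not work element-by-element inside $L(f)$; instead it builds an auxiliary highest weight module
\[
\tilde M(f):=\varinjlim_F\,\bigl(M(f|_F)\big/(I_F\cdot M(f|_F))\bigr),
\]
observes that $I$ annihilates $\tilde M(f)$ by construction and that each term of the limit is nonzero (since $I_F$ kills $L(f|_F)$), and then notes that $L(f)$ is a quotient of $\tilde M(f)$, whence $I\subset I(f)$. Your argument instead stays inside $L(f)$: given $u\in I$ and $m=x\cdot v$, you note $ux\in I_{F'}$ and invoke Proposition~\ref{Pfinf} to realize $\widehat L(f|_{F'})$ as an $\frak{sl}(F')$-submodule of $L(f|_{\bar F'})$, where the hypothesis for $\bar F'$ applies. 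The paper's construction is self-contained and does not need Proposition~\ref{Pfinf}; your argument is shorter and reuses that proposition, making transparent why passing to a larger finite set is the crux of the matter. Both are valid, and your identification of the enlargement $F'\rightsquigarrow\bar F'$ as the only nonroutine step is exactly right.
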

\begin{proof}
Let $I\subset I(f)$. Denote by $v_f$ a highest weight vector of $L(f)$. If $F$ is a finite set, then $\Ue(\frak{sl}(F))\cdot v_f$ is a highest weight $\frak{sl}(F)$-submodule of $L(f)$. Thus $L(f|_F)$ is isomorphic to a subquotient of  $L(f)$, and consequently $I_F=I\cap\Ue(\frak{sl}(F))$ annihilates $L(f|_F)$.

We now prove the converse. Set $$\Ver(F):=\Ver(f|_F)/(I\cap\Ue(\frak{sl}(F))\cdot\Ver(f|_F).$$As $I\cap\Ue(\frak{sl}(F))$ annihilates $L(f|_F)$, $\Ver(F)$ is a nonzero highest weight $\frak{sl}(F)$-module. Let $v_f(F)$ be a highest weight vector of $M(F)$. For any finite subsets $F_1\subset F_2\subset\mathbb Z_{>0}$, there exists a unique morphism of $\frak{sl}(F_1)$-modules$$\phi_{F_1, F_2}: \Ver(F_1)\to\Ver(F_2)$$such that
$\phi_{F_1, F_2}(v_f(F_1))=v_f(F_2)$. This defines a direct
system of morphisms $$\{\phi_{F_1, F_2}\}_{F_1\subset F_2},$$ and we denote its limit by $\tilde\Ver(f)$.

By definition, $I$ annihilates the $\frak{sl}(\infty)$-module $\tilde\Ver(f)$. Our construction guarantees that $\tilde \Ver(f)$ contains a highest
vector $v_f:=\varinjlim v_{f|_{F_i}}$ of weight $\lambda_f$. Thus $L(f)$ is isomorphic to a
simple quotient of $\tilde \Ver(f)$, which implies $I\subset I(f)$.\end{proof}

\section{Proof of Theorem~\ref{2T} and Proposition~\ref{Pp}}\label{Spr2t}
Theorem~\ref{2T} is implied by the following propositions.

\begin{proposition}\label{Pint}Let $\frak b\supset\frak h$ be a splitting Borel subalgebra of $\frak{sl}(\infty)$, and $f\in\mathbb C^{\mathbb Z_{>0}}$ be function. Then $I=\Ann_{\Ue(\frak{sl}(\infty))} L_{\frak b}(f)$ is a prime integrable ideal of $\Ue(\frak{sl}(\infty))$.\end{proposition}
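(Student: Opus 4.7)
Set $I:=\Ann_{\Ue(\frak{sl}(\infty))}L_{\frak b}(f)$. We must establish primality and integrability of $I$.

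\emph{Primality.} Since $L_\frak b(f)$ is simple by the definition of the simple highest weight module, $I$ is a primitive ideal, and primitive ideals are always prime: if $J,K$ are two-sided ideals of $\Ue(\frak{sl}(\infty))$ with $JK\subset I$ and $K\not\subset I$, then $KL_\frak b(f)$ is a nonzero $\frak{sl}(\infty)$-submodule of $L_\frak b(f)$, hence equals $L_\frak b(f)$ by simplicity, and so $JL_\frak b(f)=(JK)L_\frak b(f)\subset IL_\frak b(f)=0$, giving $J\subset I$.

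\emph{Integrability, trivial case.} If $I=0$, fix an ideal Borel subalgebra $\frak b^0$ and take $N:=\bigoplus_g L_{\frak b^0}(g)$, the sum ranging over all $\frak b^0$-dominant integer-valued functions $g\in\mathbb C^{\mathbb Z_{>0}}$. Every summand is integrable by Corollary~\ref{Ldom}, so $N$ is integrable. Any nonzero $u\in\Ue(\frak{sl}(\infty))$ lies in some $\Ue(\frak{sl}(n))$ and, by finite-dimensional representation theory, fails to annihilate some simple $\frak{sl}(n)$-module, which appears as an $\frak{sl}(n)$-subquotient of a suitable $L_{\frak b^0}(g)$. Hence $\Ann N=0=I$.

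\emph{Integrability, main case.} If $I\neq 0$, Theorem~\ref{1T} forces $f$ to be almost integral and locally constant with respect to $\prec$. Proposition~\ref{Pailf} then produces an irreducible c.l.s.\ $Q$ of the form~(\ref{Ecfcls})---namely $\mathcal L_{\nint(f)+\gamma}^\infty\mathcal E^{\wid(f)}$, or a finer refinement obtained by applying Lemma~\ref{Lre} along all jumps of $f$---satisfying $I(Q)\subset I$. By Proposition~\ref{Lexpl}c), $Q=\cls(L_{\frak b^0}(f^0))$ for some $\frak b^0$-dominant $f^0$; by Corollary~\ref{Ldom} the module $L_{\frak b^0}(f^0)$ is integrable, so $I(Q)=\Ann L_{\frak b^0}(f^0)$ is already integrable.

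\emph{The main obstacle} is upgrading the inclusion $I(Q)\subset I$ to an equality, for then $I$ itself is the annihilator of an integrable module. By Lemma~\ref{Lidmo}, this reduces to showing that for every finite $F\subset\mathbb Z_{>0}$ the ideal $I\cap\Ue(\frak{sl}(F))$ annihilates every simple $\frak{sl}(F)$-module appearing in $Q_{|F|}$. Proposition~\ref{Pfinf} identifies this intersection with the annihilator of the finite-dimensional module $\widehat L_\frak b(f|_F)\cong\Ima\psi_F$, whose associated variety is governed by the Robinson-Schensted-type algorithm of Section~\ref{SSrsa} together with the rank bound of Theorem~\ref{Tuslrk}. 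The plan is then to match, finite $F$ by finite $F$, this partition-theoretic data against the analogous data for the integrable module $L_{\frak b^0}(f^0)$ restricted to a corresponding finite subset---thereby obtaining the desired equality $I=I(Q)$ and completing the proof of integrability.
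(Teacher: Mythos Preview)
Your primality argument is fine and matches the paper. The $I=0$ case is also acceptable. The problem is in the main case $I\neq 0$.

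Your strategy is to show $I=I(Q)$ for the specific c.l.s.\ $Q=\mathcal L_{\nint(f)+\gamma}^\infty\mathcal E^{\wid(f)}$ coming from Proposition~\ref{Pailf}. But you do not actually prove this equality; the last paragraph is only a ``plan'' to match partition data. Worse, several ingredients of that plan are false as stated: $\widehat L_\frak b(f|_F)$ is in general \emph{not} finite dimensional (it is a quotient of a Verma module over $\frak{sl}(F)$), and Proposition~\ref{Pfinf} does \emph{not} identify $I\cap\Ue(\frak{sl}(F))$ with $\Ann_{\Ue(\frak{sl}(F))}\widehat L_\frak b(f|_F)$---one only has the inclusion $I\cap\Ue(\frak{sl}(F))\subset\Ann\widehat L_\frak b(f|_F)$, which goes the wrong way for what you need. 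Lemma~\ref{Lidmo} is also about containment in annihilators of highest weight modules, not about c.l.s., so your reduction via that lemma is not justified. Finally, there is no reason to expect the crude bound of Proposition~\ref{Pailf} to be sharp, so aiming for $I=I(Q)$ with that particular $Q$ is the wrong target in general.

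The paper bypasses all of this with a much cleaner idea. Since $I\neq 0$, Theorem~\ref{1T} makes $f$ locally constant for a compatible partition $S_1\sqcup\cdots\sqcup S_t$; pick any \emph{infinite} block $S_i$. By Corollary~\ref{Ppint} the module $L_\frak b(f)$ is already integrable as an $\frak{sl}(S_i)$-module, so $\Ann_{\Ue(\frak{sl}(S_i))}L_\frak b(f)$ is an integrable ideal of $\Ue(\frak{sl}(S_i))$. The key trick is Proposition~\ref{Ppsisb}: for any infinite $S\subset\mathbb Z_{>0}$ and any ideal $J\subset\Ue(\frak{sl}(\infty))$, a bijection $\mathbb Z_{>0}\to S$ identifies $J$ with $J\cap\Ue(\frak{sl}(S))$ (because all copies of $\frak{sl}(n)$ inside $\frak{sl}(\infty)$ are $\Adj$-conjugate, Lemma~\ref{Lconj}). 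Hence integrability of $\Ann_{\Ue(\frak{sl}(S_i))}L_\frak b(f)$ transfers directly to integrability of $I=\Ann_{\Ue(\frak{sl}(\infty))}L_\frak b(f)$ (Corollary~\ref{Psisb}). No comparison with a specific c.l.s.\ is needed.
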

\begin{proposition}\label{Piint}Let $I$ be a prime integrable ideal of $\Ue(\frak{sl}(\infty))$ and $\frak b^0\supset\frak h$ be an ideal Borel subalgebra of $\frak{sl}(\infty)$. Then $I=\Ann_{\Ue(\frak{sl}(\infty))} L_{\frak b^0}(f^0)$ for some $f^0\in\mathbb C^{\mathbb Z_{>0}}$.\end{proposition}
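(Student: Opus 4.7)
My plan is to first apply Proposition~\ref{Pintm}(b) and Zhilinskii's classification~(\ref{Ecfcls}) to rewrite $I=I(Q)$ with
\[
Q = (\mathcal L_v^\infty\mathcal L_{v+1}^{x_{v+1}}\cdots\mathcal L_n^{x_n})\,\mathcal E^m\,(\mathcal R_w^\infty\mathcal R_{w+1}^{z_{w+1}}\cdots\mathcal R_l^{z_l}).
\]
When $v=w=0$, Proposition~\ref{Lexpl}(c) directly delivers a $\frak b^0$-dominant function $f^0$ with $\cls(L_{\frak b^0}(f^0))=Q$; then $L_{\frak b^0}(f^0)$ is integrable by Corollary~\ref{Ldom}, so $\Ann L_{\frak b^0}(f^0)=I(\cls(L_{\frak b^0}(f^0)))=I(Q)=I$ and the proposition is settled in this case.

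For general $Q$, the plan is to enrich the finite-type construction by inserting an infinite constant $\prec$-block inside $S_1$ to produce the factor $\mathcal L_v^\infty$, and dually inside $S_3$ to produce $\mathcal R_w^\infty$. Concretely, I would refine $S_1=\widetilde S_1\sqcup\widehat S_1$ with $\widetilde S_1$ a finite $\prec$-initial segment and $\widehat S_1$ an infinite $\prec$-tail, and $S_3=\widehat S_3\sqcup\widetilde S_3$ symmetrically; then define $f^0$ locally constant with respect to the compatible partition $\widetilde S_1\sqcup\widehat S_1\sqcup S_2\sqcup\widehat S_3\sqcup\widetilde S_3$. The integer values of $f^0$ are to be chosen so that on $\widetilde S_1\cup S_2\cup\widetilde S_3$ the function replicates the Proposition~\ref{Lexpl}(c) construction for the finite-type part of $Q$, while the constant values on $\widehat S_1$ and $\widehat S_3$ differ from their $\prec$-neighbouring $\widetilde S$ values by exactly $v$ and $w$, respectively.

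For the inclusion $I(Q)\subset \Ann L_{\frak b^0}(f^0)$ I plan to apply Proposition~\ref{Pailf} to the refined partition, yielding an $\mathcal L^\infty$-contribution of index $v$ (a finite correction being absorbed in the $\widetilde S_1$ part), together with a symmetric $\mathcal R^\infty$-statement at the $S_3$-end, obtainable by an analogue of Lemma~\ref{Lre} applied to the opposite end of the $\prec$-order, or by reducing to it via Proposition~\ref{Lpind} to factor off $\frak{sl}(S_3)$. Proposition~\ref{Lexpl}(c) handles the remaining finite-type part, and the multiplicativity $I(Q_1 Q_2)\subset I(Q_1)\cap I(Q_2)$ for products of c.l.s.\ then combines all three contributions into $I(Q)\subset \Ann L_{\frak b^0}(f^0)$.

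The reverse inclusion is the delicate one. By Theorem~\ref{1T}b) the ideal $\Ann L_{\frak b^0}(f^0)$ is integrable, and being primitive it is prime, so by Proposition~\ref{Pintm}(b) it equals $I(Q'')$ for some irreducible c.l.s.\ $Q''\supseteq Q$. To prove $Q''\subseteq Q$, I would describe, for every finite $F\subset\mathbb Z_{>0}$, all composition factors of the finite-dimensional module $\widehat L_{\frak b^0}(f^0|_F)$ provided by Proposition~\ref{Pfinf}, and use the Robinson--Schensted description of $\frak{sl}(|F|)$-annihilators of Proposition~\ref{Ppart} to argue that each such factor lies in $Q_{|F|}$. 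The main obstacle is precisely this last step: controlling the composition factors of the approximating finite-dimensional modules $\widehat L(f^0|_F)$ attached to a non-dominant $f^0$, which amounts to extending the combinatorial content of Proposition~\ref{Lexpl}(a) to the non-integrable setting.
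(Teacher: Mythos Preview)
Your proposal does not close, and the gap you yourself flag as ``the main obstacle'' is genuine: with an \emph{integral} locally constant $f^0$ you have no tool in the paper for computing $\Ann L_{\frak b^0}(f^0)$ exactly. Proposition~\ref{Pailf} only gives an inclusion of the form $I(\mathcal L_{\bullet}^\infty\mathcal E^{\bullet})\subset I(f^0)$, and even your forward inclusion $I(Q)\subset\Ann L_{\frak b^0}(f^0)$ is not secured by the multiplicativity you quote: from $I(\mathcal L_v^\infty)\subset I(f^0)$ and $I(Q_0)\subset I(f^0)$ one does \emph{not} get $I(\mathcal L_v^\infty Q_0)\subset I(f^0)$; the inclusion $I(Q_1Q_2)\subset I(Q_1)\cap I(Q_2)$ runs the wrong way for that purpose. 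Analyzing the simple $\frak{sl}(|F|)$-constituents of $\widehat L(f^0|_F)$ for non-dominant $f^0$ would amount to a Kazhdan--Lusztig type computation at each finite level, which the paper never undertakes.

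The paper avoids all of this by a different and much shorter device. Instead of inserting infinite integral blocks, it removes a \emph{finite} set: take $F_l$ to be the first $l=v$ elements of $S_1$ and $F_r$ the last $r=w$ elements of $S_3$, set $S:=\mathbb Z_{>0}\setminus(F_l\sqcup F_r)$, and assign to the points of $F_l\cup F_r$ pairwise \emph{non-integral} values (conditions~2) and~3) of Lemma~\ref{Lclspar}). The non-integrality triggers Proposition~\ref{Lpind}, so $L_{\frak b^0}(f)$ is parabolically induced and, as an $\frak{sl}(S)$-module, equals $\Sa^\cdot(\frak{sl}(\infty)/\frak p)\otimes L_{\frak b^0_S}(f|_S)$ with $f|_S$ chosen $\frak b^0_S$-dominant realising the finite-type part of $Q$ via Proposition~\ref{Lexpl}(c). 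One then reads off the $\frak{sl}(S)$-c.l.s.\ of $L_{\frak b^0}(f)$ \emph{exactly} as $\mathcal L_l^\infty\,\cls(L_{\frak b^0_S}(f|_S))\,\mathcal R_r^\infty$. Finally, Proposition~\ref{Ppsisb} (the self-similarity $\Ann_{\Ue(\frak{sl}(\infty))}M\cong\Ann_{\Ue(\frak{sl}(S))}M$) transports this equality of annihilators from $\frak{sl}(S)$ back to $\frak{sl}(\infty)$. Both inclusions come for free; there is no need to control finite-level composition factors.
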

\subsection{Proof of Proposition~\ref{Pint}}\label{SSint} 
The annihilator of a simple module is always prime, therefore in order to prove Proposition~\ref{Pint} we have to prove that the ideal $\Ann_{\Ue(\frak{sl}(\infty))} L_\frak b(f)$ is integrable for any $\frak b$ and any $f\in\mathbb C^{\mathbb Z_{>0}}$.
This is a direct consequence of the following three statements.

\begin{proposition}\label{Ppsisb}Let $S$ be an infinite subset of $\mathbb Z_{>0}$ and $\phi: \mathbb Z_{>0}\to S$ be a fixed bijection. Let $I$ be an ideal of $\Ue(\frak{sl}(\infty))$. Then the induced isomorphism $\phi: \Ue(\frak{sl}(\infty))\to \Ue(\frak{sl}(S))$ identifies $I$ and $I\cap\Ue(\frak{sl}(S))$.\end{proposition}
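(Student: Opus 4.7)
The goal is equivalent to showing that $\phi(x)\in I$ if and only if $x\in I$, for every $x\in\Ue(\frak{sl}(\infty))$: one direction gives $\phi(I)\subseteq I\cap\Ue(\frak{sl}(S))$, and applying the other to preimages of elements of $I\cap\Ue(\frak{sl}(S))$ yields the reverse inclusion. My plan is to reduce this to a finite-dimensional statement and then invoke Lemma~\ref{Lconj}.

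Fix $x\in\Ue(\frak{sl}(\infty))$ and choose a finite subset $F\subset\mathbb Z_{>0}$ with $x\in\Ue(\frak{sl}(F))$. Consider two morphisms of Lie algebras $\frak{sl}(F)\to\frak{sl}(\infty)$: the canonical inclusion $\iota$, and the map $\phi_F$ defined by $e_{ij}\mapsto e_{\phi(i)\phi(j)}$, which factors through $\frak{sl}(\phi(F))\subset\frak{sl}(\infty)$. On enveloping algebras these satisfy $\iota(x)=x$ while $\phi_F(x)=\phi(x)$, so by Lemma~\ref{Lconj} it is enough to exhibit an element $g\in\Adj\frak{sl}(\infty)$ with $\phi_F=g\circ\iota$.

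To construct $g$, I would extend $\phi|_F$ to an honest finite permutation of $\mathbb Z_{>0}$: pick any finite $G\supset F\cup\phi(F)$, extend $\phi|_F$ to a bijection $\sigma\colon G\to G$, and set $\sigma$ equal to the identity on $\mathbb Z_{>0}\setminus G$. The index-permutation automorphism $g$ of $\frak{sl}(\infty)$ given by $g(e_{kl})=e_{\sigma(k)\sigma(l)}$ then satisfies $g\circ\iota=\phi_F$ by construction. To see that $g\in\Adj\frak{sl}(\infty)$, write $\sigma$ as a product of transpositions; each transposition $(i,j)$ acts on $\frak{sl}(\infty)$ as the adjoint action of $\exp(e_{ij})\exp(-e_{ji})\exp(e_{ij})$, a product of exponentials of nilpotent elements of $\frak{sl}(\infty)$. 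Composing these over the finitely many transpositions in $\sigma$ places $g$ in $\Adj\frak{sl}(\infty)$, as required.

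The only point needing any verification is the realization of the symmetric group inside the adjoint group, but this is a standard $\frak{sl}(2)$-triple calculation and is transparent because we operate entirely within $SL(\infty)$, so no central or sign ambiguity intervenes. Once this is in place, Lemma~\ref{Lconj} yields $\iota^{-1}(I)=\phi_F^{-1}(I)$, i.e., $x\in I\Leftrightarrow\phi(x)\in I$; since $F$ depended on $x$ but the conclusion does not, this holds for every $x\in\Ue(\frak{sl}(\infty))$ and the proposition follows.
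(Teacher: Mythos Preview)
Your proposal is correct and follows essentially the same approach as the paper: both reduce to a finite piece and invoke Lemma~\ref{Lconj} via the fact that the inclusions $\frak{sl}(F)\hookrightarrow\frak{sl}(\infty)$ and $\frak{sl}(F)\xrightarrow{\phi}\frak{sl}(\phi(F))\hookrightarrow\frak{sl}(\infty)$ are $\Adj\frak{sl}(\infty)$-conjugate. The paper simply asserts this conjugacy (writing $\frak{sl}(S)=\cup_m\frak{sl}(S_m)$ with $S_m=\phi(\{1,\dots,m\})$ and using $\phi^{-1}(I\cap\Ue(\frak{sl}(S_m)))=I\cap\Ue(\frak{sl}(m))$), while you supply the explicit construction of the conjugating element.

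One small caveat: the element $\exp(e_{ij})\exp(-e_{ji})\exp(e_{ij})$ represents a \emph{signed} permutation matrix, so its adjoint action sends $e_{kl}$ to $\pm e_{\sigma(k)\sigma(l)}$ rather than exactly $e_{\sigma(k)\sigma(l)}$; your remark that ``no sign ambiguity intervenes'' is not literally accurate. This is harmless, however: compose with $\mathrm{Ad}(D)$ for a suitable diagonal $D\in SL(G')$ (adjusting one entry outside $\phi(F)$ if needed to make $\det D=1$) to kill the signs on $\frak{sl}(F)$, and $\mathrm{Ad}(D)\in\Adj\frak{sl}(\infty)$ since $SL(G')$ is generated by unipotents. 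With this tweak your argument is complete.
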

\begin{proof} Fix the exhaustion~(\ref{Eex}) and assume that $\frak{sl}(n)$ is generated by $e_{ij}$ for $i\ne j, i,j\le n$. Then $\frak{sl}(S)=\cup_m\frak{sl}(S_m)$, where $S_m$ is the image of $\{1,..., m\}$ under $\phi$. We have $$I\cap\Ue(\frak{sl}(S))=\cup_m(I\cap\Ue(\frak{sl}(S_m))).$$Since, for every $n\ge1$, $\frak{sl}(n)$ is $\Adj\frak{sl}(\infty)$-conjugate to $\frak{sl}(S_n)$, Lemma~\ref{Lconj} yields $$\phi^{-1}(I\cap\Ue(\frak{sl}(S)))=\phi^{-1}(\cup_n(I\cap\Ue(\frak{sl}(S_n))))=\cup_n(I\cap\Ue(\frak{sl}(n)))=I.$$
\end{proof}
\begin{corollary}\label{Psisb}Let $M$ be an $\frak{sl}(\infty)$-module and $S$ be an infinite subset of $\mathbb Z_{>0}$. Then $\Ann_{\Ue(\frak{sl}(\infty))}M$ is an integrable ideal in $\Ue(\frak{sl}(\infty))$ if and only $\Ann_{\Ue(\frak{sl}(S))}M$ is an integrable ideal of $\Ue(\frak{sl}(S))$.\end{corollary}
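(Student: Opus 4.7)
The plan is to deduce the corollary from Proposition~\ref{Ppsisb} by transferring annihilators and integrability across the Lie algebra isomorphism $\phi\colon \frak{sl}(\infty)\to\frak{sl}(S)$ induced by the bijection $\mathbb Z_{>0}\to S$.

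First I would record the identification of annihilators. Since the $\frak{sl}(S)$-module structure on $M$ is the restriction of its $\frak{sl}(\infty)$-module structure, we have
\begin{equation*}
\Ann_{\Ue(\frak{sl}(S))}M \;=\; \Ann_{\Ue(\frak{sl}(\infty))}M\,\cap\,\Ue(\frak{sl}(S)).
\end{equation*}
Applying Proposition~\ref{Ppsisb} to the ideal $I:=\Ann_{\Ue(\frak{sl}(\infty))}M$ yields
\begin{equation*}
\phi\bigl(\Ann_{\Ue(\frak{sl}(\infty))}M\bigr)=\Ann_{\Ue(\frak{sl}(S))}M,
\end{equation*}
so the two ideals correspond under the algebra isomorphism $\phi\colon\Ue(\frak{sl}(\infty))\to\Ue(\frak{sl}(S))$.

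Next I would check that $\phi$ preserves the property ``integrable'' on ideals. Given any $\frak{sl}(\infty)$-module $N$, define the twisted $\frak{sl}(S)$-module $\phi^*N$ by $x\cdot n:=\phi^{-1}(x)\cdot n$ for $x\in\frak{sl}(S)$, $n\in N$. Because $\phi$ is a Lie algebra isomorphism, it restricts to a bijection between finite-dimensional subalgebras of $\frak{sl}(\infty)$ and of $\frak{sl}(S)$; hence $N$ is integrable (in the sense of Section~\ref{SSmi}) if and only if $\phi^*N$ is integrable. A direct computation shows that $u\in\Ue(\frak{sl}(S))$ annihilates $\phi^*N$ exactly when $\phi^{-1}(u)$ annihilates $N$, so
\begin{equation*}
\Ann_{\Ue(\frak{sl}(S))}\phi^*N \;=\; \phi\bigl(\Ann_{\Ue(\frak{sl}(\infty))}N\bigr).
\end{equation*}

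Finally I would combine these two observations. If $\Ann_{\Ue(\frak{sl}(\infty))}M$ is integrable, there is an integrable $\frak{sl}(\infty)$-module $N$ with $\Ann_{\Ue(\frak{sl}(\infty))}N=\Ann_{\Ue(\frak{sl}(\infty))}M$; then $\phi^*N$ is an integrable $\frak{sl}(S)$-module whose annihilator equals $\phi(\Ann_{\Ue(\frak{sl}(\infty))}M)=\Ann_{\Ue(\frak{sl}(S))}M$, establishing one direction. The reverse direction is symmetric, using $(\phi^{-1})^*$ to pull an integrable $\frak{sl}(S)$-module back to $\frak{sl}(\infty)$. There is no essential obstacle here: the corollary is a formal consequence of Proposition~\ref{Ppsisb} together with the obvious transport of integrability under a Lie algebra isomorphism, and the only point requiring a brief verification is that finite-dimensional subalgebras correspond under $\phi$.
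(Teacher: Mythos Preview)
Your proposal is correct and follows exactly the approach the paper intends: the corollary is stated without proof as an immediate consequence of Proposition~\ref{Ppsisb}, and you have simply spelled out the routine transport of annihilators and integrability along the isomorphism $\phi$.
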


\begin{proposition}\label{Psai}Let $\frak b$ and $f$ be as in Proposition~\ref{Pint}. If $\Ann_{\Ue(\frak{sl}(\infty))} L_\frak b(f)\ne0$, then there exists an infinite subset $S\subset\mathbb Z_{>0}$
such that $L_\frak b(f)$ is an integrable
$\frak{sl}(S)$-module.\end{proposition}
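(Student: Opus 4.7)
The plan is to invoke Theorem~\ref{1T}a), which has already been established, together with the $\prec$-dominance criterion of Proposition~\ref{Ldom-}. The overall strategy is that the hypothesis $\Ann_{\Ue(\frak{sl}(\infty))} L_\frak b(f)\ne 0$ forces $f$ to be constant on at least one infinite block of some finite compatible partition of $\mathbb Z_{>0}$, and $\frak{sl}(S)$-integrability on such a block then follows automatically.

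Concretely, I would proceed as follows. First, apply Theorem~\ref{1T}a) to conclude that $f$ is locally constant with respect to $\prec$, i.e.\ there exists a finite partition $\mathbb Z_{>0}=S_1\sqcup\dots\sqcup S_t$ compatible with $\prec$ on each block of which $f$ is constant. Since $t$ is finite and $\mathbb Z_{>0}$ is infinite, the pigeonhole principle produces an index $i$ with $S:=S_i$ infinite. On this $S$, the restriction $f|_S$ is constant, so all pairwise differences vanish and lie in $\mathbb Z$; hence $f|_S$ is integral and $\prec$-dominant in the sense of Section~\ref{SSsnot}.

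Now apply Proposition~\ref{Ldom-} (taking the trivial refinement of the partition in which $S_i$ appears as a single block) to conclude that $L_\frak b(f)$ is $\frak{sl}(S)$-integrable, which is exactly the required statement.

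There is essentially no hard step at this stage of the paper: the substantive content has been absorbed into Theorem~\ref{1T}a), and the almost-integrality of $f$ is not even needed here—only the local constancy, which by itself already forces at least one infinite block to carry a constant (hence dominant) restriction of $f$. The only point worth being mildly careful about is confirming that ``locally constant'' in the sense of Section~\ref{SSsnot} refers to a \emph{finite} compatible partition, so that the pigeonhole step is legitimate.
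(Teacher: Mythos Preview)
Your proposal is correct and follows essentially the same route as the paper: invoke the already-proved local constancy of $f$ (Theorem~\ref{1T}a)), pick an infinite block $S_i$ of the resulting finite compatible partition, and deduce $\frak{sl}(S_i)$-integrability. The only cosmetic difference is that the paper cites Corollary~\ref{Ppint} (which packages the constant-hence-dominant step), whereas you go directly through Proposition~\ref{Ldom-}.
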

\begin{proof}
As $\Ann_{\Ue(\frak{sl}(\infty))} L_\frak b(f)\ne0$, $f$ is locally constant relative to some partition $S_1\sqcup...\sqcup S_t=\mathbb Z_{>0}$, compatible with the order determined by $\frak b$. Since $S_i$ is infinite for some $i$, we apply Proposition~\ref{Ppint} to conclude that $L_\frak b(f)$ is an integrable $\frak{sl}(S_i)$-module.\end{proof}

\subsection{Proof of Proposition~\ref{Piint}}\label{SSiint}
Let $\frak b^0\supset\frak h$ be a fixed ideal Borel subalgebra of $\frak{sl}(\infty)$. The goal of this section is to show that any integrable ideal is an
annihilator of some $\frak b^0$-highest weight module of $\frak{sl}(\infty)$, and thus to prove Proposition~\ref{Piint}. Due to the fact that an arbitrary irreducible c.l.s. has the form $\mathcal L^\infty_lQ\mathcal R^\infty_r$ for $l,r\in\mathbb Z_{\ge0}$ and some irreducible c.l.s. of finite type $Q$, it is enough to prove the following proposition.

\begin{proposition}\label{Pclsid}For any
irreducible c.l.s. $Q$ of finite type and any $l, r\in\mathbb Z_{\ge0}$ there
exists $f\in \mathbb C^{\mathbb Z_{>0}}$ such that $$\Ann_{\Ue(\frak{sl}(\infty))}L_{\frak b^0}(f)=I(\mathcal L^\infty_lQ\mathcal R^\infty_r).$$\end{proposition}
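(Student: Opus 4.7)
My plan is to realize the ideal as $\Ann_{\Ue(\frak{sl}(\infty))} L_{\frak b^0}(f)$ for an explicit $f$ built in two stages. First, by Proposition~\ref{Lexpl}(c), pick a $\frak b^0$-dominant $f^0 \in \mathbb C^{\mathbb Z_{>0}}$ with $\cls(L_{\frak b^0}(f^0)) = Q$. Since $f^0$ is dominant, Corollary~\ref{Ldom} implies that $L_{\frak b^0}(f^0)$ is integrable, so $\Ann_{\Ue(\frak{sl}(\infty))} L_{\frak b^0}(f^0) = I(Q)$. This handles the case $l = r = 0$.

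For general $l, r$, use the decomposition $\mathbb Z_{>0} = S_1 \sqcup S_2 \sqcup S_3$ and the $\prec$-enumerations $k^1 \in S_1$, $k^3 \in S_3$ from the proof of Proposition~\ref{Lexpl}(c). Choose complex numbers $z_1, \ldots, z_l, w_1, \ldots, w_r$ with all pairwise differences and differences from the integer class of $f^0$ lying outside $\mathbb Z$. Set $f(k^1) := z_k$ for $k \le l$, $f(k^3) := w_k$ for $k \le r$, and $f := f^0$ elsewhere. Then $f$ is locally constant with respect to a compatible partition obtained from the $f^0$-partition by splitting off the $l+r$ singleton blocks $\{1^1\},\ldots,\{l^1\},\{r^3\},\ldots,\{1^3\}$, and $f$ is almost integral. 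By Theorem~\ref{1T}, $\Ann_{\Ue(\frak{sl}(\infty))} L_{\frak b^0}(f) \ne 0$.

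Apply Proposition~\ref{Lpind} with the integer-class partition of $\mathbb Z_{>0}$, where each of $\{1^1\}, \ldots, \{l^1\}, \{r^3\}, \ldots, \{1^3\}$ is its own class and the remaining unmodified positions form a single ``middle'' class. This exhibits $L_{\frak b^0}(f)$ as a parabolic induction from an $\frak l$-module whose semisimple component is $L_{\frak b^0}(f^0|_{\text{middle}})$, carrying c.l.s.\ $Q$. The inclusion $I(\mathcal L^\infty_l Q \mathcal R^\infty_r) \subseteq \Ann_{\Ue(\frak{sl}(\infty))} L_{\frak b^0}(f)$ should then follow by combining the contribution from the $l+r$ singleton blocks (via Proposition~\ref{Pailf} and its finite-dimensional refinement Proposition~\ref{Pailf-f}) with the $Q$-contribution from the integrable middle factor. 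For the reverse inclusion, Proposition~\ref{Pint} shows that $\Ann_{\Ue(\frak{sl}(\infty))} L_{\frak b^0}(f)$ is a prime integrable ideal, so by Proposition~\ref{Pintm}b) it equals $I(Q')$ for some irreducible c.l.s.\ $Q'$; one then verifies $Q' = \mathcal L^\infty_l Q \mathcal R^\infty_r$ by matching associated varieties via Theorem~\ref{Tuslrk} and tracking which basic c.l.s.\ factors can occur by the position of the singletons in $\prec$.

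The main obstacle will be justifying the asymmetric split between the two ends: the $l$ modified singletons at the $\prec$-smallest positions of $S_1$ must contribute precisely $\mathcal L^\infty_l$, while the $r$ modified singletons at the $\prec$-largest positions of $S_3$ must contribute precisely $\mathcal R^\infty_r$, rather than combining symmetrically into a single $\mathcal L^\infty_{l+r}$ factor (as the bare application of Proposition~\ref{Pailf} might suggest). This asymmetry reflects the structural distinction between the $\omega$-type order on $S_1$ and the $\omega^*$-type order on $S_3$ built into the ideal Borel subalgebra; making it rigorous will require a refinement going beyond Proposition~\ref{Pailf}, either via a dualized version obtained by replacing $V(\infty)$ with $V(\infty)_*$ throughout the Robinson--Schensted-type analysis of Section~\ref{Spr1ta}, or by a direct comparison of primitive ideals with matching associated variety.
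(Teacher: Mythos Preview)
Your setup is essentially the paper's: you correctly choose $f$ by placing $l$ non-integral values at the $\prec$-first positions of $S_1$ and $r$ non-integral values at the $\prec$-last positions of $S_3$, keeping a $\frak b^0_S$-dominant $f^0$ on the complement $S$, and you correctly invoke Proposition~\ref{Lpind} to express $L_{\frak b^0}(f)$ as a parabolic induction. The gap is in what you do afterward.

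You try to prove the equality as two inclusions, using Proposition~\ref{Pailf} for one direction and associated-variety matching via Theorem~\ref{Tuslrk} for the other. Neither works. Proposition~\ref{Pailf} only produces ideals of the form $I(\mathcal L^\infty_k\mathcal E^m)$, not $I(\mathcal L^\infty_l Q\mathcal R^\infty_r)$; it cannot see the factor $Q$ nor distinguish $\mathcal L^\infty$ from $\mathcal R^\infty$. And the associated variety (an integer $r$ by Theorem~\ref{Tuslrk}) is far too coarse to pin down a c.l.s.\ uniquely. You correctly diagnose the $\mathcal L^\infty_l$ versus $\mathcal R^\infty_r$ asymmetry as the crux, but your proposed fixes (a dualized Robinson--Schensted analysis, or primitive-ideal comparison) are both unnecessary and not carried out.

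The paper avoids all of this by a direct computation of the c.l.s.\ over $\frak{sl}(S)$. From the parabolic induction one has an isomorphism of $\frak{sl}(S)$-modules
\[
L_{\frak b^0}(f)\;\cong\;\Sa^\cdot(\frak{sl}(\infty)/\frak p)\otimes_{\mathbb C} L_{\frak b^0_S}(f|_S),
\]
and, as an $\frak{sl}(S)$-module, $\frak{sl}(\infty)/\frak p\cong (V(S)\otimes\mathbb C^l)\oplus(V(S)_*\otimes\mathbb C^r)\oplus(\text{trivial})$. Hence $\Sa^\cdot(\frak{sl}(\infty)/\frak p)$ has c.l.s.\ exactly $\mathcal L^\infty_l\mathcal R^\infty_r$, and tensoring with $L_{\frak b^0_S}(f|_S)$ (whose c.l.s.\ is $Q$) gives c.l.s.\ $\mathcal L^\infty_l Q\mathcal R^\infty_r$ on the nose. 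In particular $L_{\frak b^0}(f)$ is $\frak{sl}(S)$-integrable, so $\Ann_{\Ue(\frak{sl}(S))}L_{\frak b^0}(f)=I(\mathcal L^\infty_l Q\mathcal R^\infty_r)$; then Proposition~\ref{Ppsisb} identifies this with $\Ann_{\Ue(\frak{sl}(\infty))}L_{\frak b^0}(f)$. The $\mathcal L$/$\mathcal R$ asymmetry you worried about is resolved automatically: the $l$ singletons at the bottom of $\prec$ contribute copies of the natural module $V(S)$ to $\frak n^-$, while the $r$ singletons at the top contribute copies of the conatural module $V(S)_*$.
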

We fix $l, r\in\mathbb Z_{\ge0}$. According to Proposition~\ref{Ppsisb}, the ideals $\Ann_{\Ue(\frak{sl}(\infty))}M$ and $\Ann_{\Ue(\frak{sl}(S))}M$ can be identified for any $\frak{sl}(\infty)$-module $M$ and any infinite subset $S$ of $\mathbb Z_{>0}$. Therefore, Proposition~\ref{Pclsid} is implied by 
the following lemma.
\begin{lemma}\label{Lclsidg}For any irreducible
c.l.s. $Q$ of finite type, there exist $f\in \mathbb C^{\mathbb Z_{>0}}$ and
an infinite subset $S\subset\mathbb Z_{>0}$ such that the $\frak{sl}(\infty)$-module $L_{\frak b^0}(f)$ is integrable as
an $\frak{sl}(S)$-module and the c.l.s. for $\frak{sl}(S)$ of $L_{\frak b^0}(f)$ equals to $\mathcal L^\infty_lQ\mathcal R^\infty_r$.\end{lemma}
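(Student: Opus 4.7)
Set $\mathbb{Z}_{>0} = S_1 \sqcup S_2 \sqcup S_3$ for the partition defined by $\frak b^0$, and choose $l$ elements $a_1 \prec \cdots \prec a_l$ at the $\prec$-beginning of $S_1$ together with $r$ elements $b_r \prec \cdots \prec b_1$ at the $\prec$-end of $S_3$. Put $S := \mathbb{Z}_{>0} \setminus \{a_1, \ldots, a_l, b_1, \ldots, b_r\}$. Since only finitely many elements are removed from the beginning of $S_1$ and the end of $S_3$, the Borel $\frak b^0 \cap \frak{sl}(S)$ is still ideal. By Proposition~\ref{Lexpl}(c) applied to $\frak{sl}(S)$ in place of $\frak{sl}(\infty)$, one can pick an integer-valued $(\frak b^0 \cap \frak{sl}(S))$-dominant $g \in \mathbb{C}^S$ with $\cls(L_{\frak b^0 \cap \frak{sl}(S)}(g)) = Q$. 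Extend $g$ to $f \in \mathbb{C}^{\mathbb{Z}_{>0}}$ by assigning $f(a_j) := \theta_j$ and $f(b_k) := \phi_k$ for real irrationals $\theta_j, \phi_k$ lying in pairwise distinct cosets of $\mathbb{Z}$ in $\mathbb{C}$ and satisfying $\theta_1 > \cdots > \theta_l > \max g \geq \min g > \phi_r > \cdots > \phi_1$. Then $f$ is $\frak b^0$-dominant and the compatible partition $\{a_1\} \prec \cdots \prec \{a_l\} \prec S \prec \{b_r\} \prec \cdots \prec \{b_1\}$ of $\mathbb{Z}_{>0}$ satisfies the incommensurability hypothesis of Proposition~\ref{Lpind}.

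Applying Proposition~\ref{Lpind}, the singleton factors $L(f|_{\{a_j\}})$ and $L(f|_{\{b_k\}})$ are one-dimensional (as $\frak{sl}$ of a singleton vanishes), so $L_{\frak b^0}(f) \cong \Ue(\frak{sl}(\infty)) \otimes_{\Ue(\frak p)} L(g)$ where $\frak p = \frak l + \frak n$ is the associated parabolic. The PBW theorem then gives an $\frak l$-module isomorphism
\[
L_{\frak b^0}(f) \;\cong\; \Ue(\frak n^-) \otimes_{\mathbb{C}} L(g),
\]
with $\frak l$ (hence $\frak{sl}(S) \subset \frak l$) acting diagonally.

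The heart of the argument is the $\frak{sl}(S)$-module analysis of $\frak n^-$ via $[e_{tt'}, e_{ij}] = \delta_{t'i} e_{tj} - \delta_{tj} e_{it'}$. For each $k$, the root vectors $\{e_{s, a_k} : s \in S\} \subset \frak n^-$ form a copy of the natural $\frak{sl}(S)$-module, giving in total a summand isomorphic to $V(S)^{\oplus l}$; for each $k$, the vectors $\{e_{b_k, s} : s \in S\}$ form a copy of the restricted dual $V(S)_*$, giving $V(S)_*^{\oplus r}$; and the remaining root vectors in $\frak n^-$ (with both indices in $\{a_1, \ldots, a_l, b_1, \ldots, b_r\}$) span a finite-dimensional trivial $\frak{sl}(S)$-module. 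The PBW filtration on $\Ue(\frak n^-)$ is $\frak{sl}(S)$-stable, and weight spaces are finite-dimensional at every finite level of any exhaustion of $\frak{sl}(S)$, so the $\frak{sl}(S)$-c.l.s. of $\Ue(\frak n^-)$ agrees with that of the associated graded $\Sa^\cdot(\frak n^-) = \Sa^\cdot(V(S)^{\oplus l}) \otimes \Sa^\cdot(V(S)_*^{\oplus r}) \otimes \Sa^\cdot(\mathrm{trivial})$, which is $\mathcal{L}^\infty_l \mathcal{R}^\infty_r$. Invoking Zhilinskii's product rule $(Q')_n (Q'')_n = (Q' Q'')_n$ gives $\cls_{\frak{sl}(S)}(L_{\frak b^0}(f)) = \mathcal{L}^\infty_l \mathcal{R}^\infty_r \cdot Q = \mathcal{L}^\infty_l Q \mathcal{R}^\infty_r$; the $\frak{sl}(S)$-integrability of $L_{\frak b^0}(f)$ follows from the integrability of each tensor factor.

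The main technical point, though ultimately routine, is the catalog of root vectors of $\frak n^-$ together with the verification that $\Sa^\cdot(\frak n^-)$ has the stated $\frak{sl}(S)$-structure and that the c.l.s. survives passage from the associated graded back to $\Ue(\frak n^-)$. Beyond this, the proof is largely assembly: dominance and incommensurability are engineered into the construction from the outset, and the Levi factorisation is exactly what Proposition~\ref{Lpind} and PBW deliver.
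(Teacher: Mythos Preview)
Your proof is correct and follows essentially the same route as the paper: remove $l$ points from the $\prec$-beginning and $r$ from the $\prec$-end, apply Proposition~\ref{Lexpl}(c) on the remaining set $S$ to realize $Q$, then use Proposition~\ref{Lpind} and PBW to identify $L_{\frak b^0}(f)$ as an $\frak{sl}(S)$-module with $\Sa^\cdot(\frak n^-)\otimes L(g)$ (the paper writes this directly as $\Sa^\cdot(\frak{sl}(\infty)/\frak p)\otimes L_{\frak b^0_S}(f|_S)$, which is the same object), and then reads off the c.l.s.\ from the decomposition of $\frak n^-$. One small slip worth noting: your $f$ is not $\frak b^0$-dominant in the paper's sense, since dominance is defined only for integral functions; but you never actually use that claim, and the ordering constraints you impose on the $\theta_j,\phi_k$ are harmless but unnecessary.
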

We now prove Lemma~\ref{Lclsidg} by pointing out a concrete set $S$ for which the claim of the lemma holds. We recall that the ideal Borel subalgebra $\frak b^0$ defines a partition $S_1\sqcup S_2\sqcup S_3$ of $\mathbb Z_{>0}$. Let $F_l$ be the set consisting of the first $l$ elements of $S_1$. As an ordered set $F_l$ is isomorphic to $\{1,...., l\}$ with the standard order. Let $F_r$ be set consisting of the the last $r$ elements of $S_3$. As an ordered set $F_r$ is isomorphic to $\{-r,...., -1\}$ with the standard order. Put $$S:=\mathbb Z_{>0}\backslash (F_l\sqcup F_r).$$
Note that $\frak b^0_S:=\frak b^0\cap\frak{sl}(S)$ is an ideal Borel subalgebra of $\frak{sl}(S)$. Therefore, Proposition~\ref{CexY} c) asserts that, for any c.l.s. $Q$ of finite type, there is a $\frak b^0_S$-dominant function $f^0\in \mathbb C^S$ such that $Q=\cls(L_{\frak b^0_S}(f))$. For this reason Lemma~\ref{Lclsidg} is a direct corollary of the
following lemma.

\begin{lemma}\label{Lclspar}Let $f\in \mathbb C^{\mathbb Z_{>0}}$ satisfy the conditions

1) $f|_S\in \mathbb C^S$ is $\frak b^0_S$-dominant,

2) $f(i)-f(j)\notin\mathbb Z$ for any $i\ne j, j\in F_l$,

3) $f(i)-f(j)\notin\mathbb Z$ for any $i\ne j, j\in F_r$.\\
Then the c.l.s. of the $\frak{sl}(S)$-module $L_{\frak b^0}(f)$ is equal to
$\mathcal L^\infty_l\cls(L_{\frak b^0_S}(f|_S))\mathcal R^\infty_r$.\end{lemma}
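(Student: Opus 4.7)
The plan is to apply Proposition~\ref{Lpind} to a compatible partition, use PBW and a direct analysis of the opposite nilradical to identify $L_{\frak b^0}(f)$ as an $\frak{sl}(S)$-module, and then read off its coherent local system. Writing $F_l = \{a_1 \prec \cdots \prec a_l\}$ and $F_r = \{b_1 \prec \cdots \prec b_r\}$, the partition $\{a_1\} \sqcup \cdots \sqcup \{a_l\} \sqcup S \sqcup \{b_1\} \sqcup \cdots \sqcup \{b_r\} = \mathbb Z_{>0}$ is compatible with $\prec$ since $F_l$ (respectively $F_r$) consists of the first $l$ elements of $S_1$ (respectively the last $r$ elements of $S_3$), and $S_1 \prec S_2 \prec S_3$ by the definition of an ideal Borel. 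Conditions~(2) and~(3) of the lemma translate directly into the non-integrality hypothesis of Proposition~\ref{Lpind} for this partition, since every cross-difference of $f$-values between distinct parts involves at least one element of $F_l \cup F_r$. Proposition~\ref{Lpind} then gives $L_{\frak b^0}(f) \cong \Ue(\frak{sl}(\mathbb Z_{>0})) \otimes_{\Ue(\frak p)} L_{\frak b^0}(f)^{\frak n}$, and since $\frak{sl}(\{a_\alpha\}) = 0 = \frak{sl}(\{b_\beta\})$ we have $[\frak l, \frak l] = \frak{sl}(S)$ and $L_{\frak b^0}(f)^{\frak n} \cong L_{\frak b^0_S}(f|_S)$ as $\frak{sl}(S)$-modules.

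Next, using the bracket $[e_{kl}, e_{pq}] = \delta_{lp} e_{kq} - \delta_{kq} e_{pl}$ for $k, l \in S$, a direct computation yields the $\frak{sl}(S)$-module decomposition $\frak n^- \cong (V(\infty)_S \otimes \mathbb C^l) \oplus (V(\infty)_{S*} \otimes \mathbb C^r) \oplus T$, where the summands are spanned respectively by $\{e_{s, a_\alpha}\}_{s \in S,\, 1 \le \alpha \le l}$ (giving $l$ copies of the natural $\frak{sl}(S)$-module), $\{e_{b_\beta, s}\}_{s \in S,\, 1 \le \beta \le r}$ (giving $r$ copies of the dual natural module), and the $e_{pq}$ with $p, q \in F_l \sqcup F_r$ (a trivial $\frak{sl}(S)$-module of dimension $\binom{l+r}{2}$). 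By PBW, $L_{\frak b^0}(f) \cong \Sa^\cdot(\frak n^-) \otimes_{\mathbb C} L_{\frak b^0}(f)^{\frak n}$ with diagonal $\frak{sl}(S)$-action, so
$$L_{\frak b^0}(f) \cong \Sa^\cdot(V(\infty)_S \otimes \mathbb C^l) \otimes \Sa^\cdot(V(\infty)_{S*} \otimes \mathbb C^r) \otimes \Sa^\cdot(T) \otimes L_{\frak b^0_S}(f|_S)$$
as $\frak{sl}(S)$-modules.

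Since $\Sa^\cdot(T)$ is a countable direct sum of trivial $\frak{sl}(S)$-modules, it does not contribute to the coherent local system. By the very definitions of the basic c.l.s., $\cls(\Sa^\cdot(V(\infty)_S \otimes \mathbb C^l)) = \mathcal L_l^\infty$ and $\cls(\Sa^\cdot(V(\infty)_{S*} \otimes \mathbb C^r)) = \mathcal R_r^\infty$, so the claim of the lemma reduces to the multiplicative identity
$$\cls\bigl(\Sa^\cdot(V(\infty)_S \otimes \mathbb C^l) \otimes L_{\frak b^0_S}(f|_S) \otimes \Sa^\cdot(V(\infty)_{S*} \otimes \mathbb C^r)\bigr) = \mathcal L_l^\infty \cdot \cls(L_{\frak b^0_S}(f|_S)) \cdot \mathcal R_r^\infty$$
in the sense of Zhilinskii's product $(Q_1 Q_2)_n = (Q_1)_n (Q_2)_n$.

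The main obstacle is this last step: for generic integrable $\frak{sl}(\infty)$-modules $M_1, M_2$ one has $\cls(M_1) \cdot \cls(M_2) \subsetneq \cls(M_1 \otimes M_2)$, already for $M_1 = M_2 = V(\infty)$, where the simple constituent $\Lambda^2 V_n$ appears in $V \otimes V$ at level $n$ but not in the Cartan product $\cls(V) \cdot \cls(V)$. I would establish the required identity in our specific setting by computing both sides level by level, combining the Howe-dual decomposition $\Sa^\cdot(V_n \otimes \mathbb C^l) \cong \bigoplus_{\ell(\lambda) \le \min(n, l)} V_n^\lambda \otimes (\mathbb C^l)^\lambda$ and its dual with the explicit form of $\cls(L_{\frak b^0_S}(f|_S))$ furnished by Proposition~\ref{Lexpl}(a), and matching the resulting constituents via the Littlewood--Richardson rule restricted to the particular partition shapes permitted by $\mathcal L_l^\infty$ and $\mathcal R_r^\infty$.
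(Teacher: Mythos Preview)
Your argument is the same as the paper's: apply Proposition~\ref{Lpind} to the compatible partition $\{a_1\}\sqcup\cdots\sqcup\{a_l\}\sqcup S\sqcup\{b_1\}\sqcup\cdots\sqcup\{b_r\}$, use the PBW isomorphism $L_{\frak b^0}(f)\cong\Sa^\cdot(\frak n^-)\otimes L_{\frak b^0_S}(f|_S)$ as $\frak{sl}(S)$-modules, decompose $\frak n^-$ into copies of $V(\infty)_S$, $V(\infty)_{S*}$, and a trivial summand, and conclude. The paper carries out exactly these steps (writing $\frak{sl}(\infty)/\frak p$ in place of your $\frak n^-$) and then simply asserts that the c.l.s.\ of the tensor product equals $\mathcal L_l^\infty\,\cls(L_{\frak b^0_S}(f|_S))\,\mathcal R_r^\infty$.

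You are right to flag the final multiplicative step: the identity $\cls(M_1\otimes M_2)=\cls(M_1)\cdot\cls(M_2)$ fails in general for Zhilinskii's Cartan-product multiplication, as your example $M_1=M_2=V(\infty)$ shows. The paper does not address this point; it passes directly from the identification of the individual factors to the product formula. So your caution is warranted, and your proposed verification via the $(GL_n,GL_l)$ and $(GL_n,GL_r)$ Howe decompositions together with an explicit Littlewood--Richardson analysis against the shape constraints imposed by $\mathcal L_l^\infty$, $\mathcal R_r^\infty$, and formula~(\ref{Eclsft}) is a reasonable way to close what is in fact a gap in the paper's own argument. In short: your route coincides with the paper's, and where you hesitate, the paper simply asserts.
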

\begin{proof}
By Proposition~\ref{Lpind},
$$L_{\frak b^0}(f)\cong \Ue(\frak{sl}(\infty))\otimes_{\Ue(\frak p)}L_{\frak b^0}(f)^{\frak n},$$where $L_{\frak b^0}(f)^{\frak n}\cong L_{\frak b^0_S}(f|_S)$ as an $\frak{sl}(S)$-module. Hence, there is an isomorphism of $\frak{sl}(S)$-modules
\begin{equation}L_{\frak b^0}(f)\cong \Sa^\cdot(\frak{sl}(\infty)/\frak p) \otimes_{\mathbb C} L_{\frak b^0_S}(f|_S).\label{Etpind}\end{equation}

Furthermore, there is an isomorphism of $\frak{sl}(S)$-modules \begin{equation}\frak{sl}(\infty)/\frak p\cong (V(S)\otimes\mathbb C^l\oplus\mathbb C^{\frac{l(l-1)}2})\oplus(V(S)_*\otimes\mathbb C^r\oplus\mathbb C^{\frac{r(r-1)}2})\oplus\mathbb C^{rl},\label{Eraad}\end{equation}where $V(S)$ is the natural $\frak{sl}(S)$-module and $\mathbb C$ stands for the one-dimensional trivial $\frak{sl}(S)$-module. Thus,\begin{equation}\Sa^\cdot(\frak{sl}(\infty)/\frak p)\cong
\Sa^\cdot(V(S)\otimes\mathbb C^l)\otimes\Sa^\cdot(V(S)_*\otimes\mathbb C^r)\otimes\Sa^\cdot(\mathbb C^{\frac{l(l-1)}2}\oplus\mathbb C^{\frac{r(r-1)}2})\oplus\mathbb C^{rl}).\end{equation} The c.l.s. of
$\Sa^\cdot(V(S)\otimes\mathbb C^l)=\Sa^\cdot(V(S))^{\otimes l}$ coincides with $\mathcal L^\infty_l$,  and the c.l.s. of
$\Sa^\cdot(V(S)_*\otimes\mathbb C^r)=\Sa^\cdot(V(S)_*)^{\otimes r}$ coincides with $\mathcal R^\infty_r$. Hence the c.l.s. of $\Sa^\cdot(\frak{sl}(\infty)/\frak p)$ as an $\frak{sl}(S)$-module coincides with $\mathcal
L_l^\infty\mathcal R^\infty_r$, and the proof is complete.\end{proof}
\begin{Ex} Consider the fixed exhaustion~(\ref{Eex}) of $\frak{sl}(\infty)$. Note that there is a canonical injection of $\frak{sl}(i)$-modules $\Sa^iV_i\to \Sa^{i+1}V_{i+1}$, where $V_i$ and $V_{i+1}$ are respectively the natural representations of $\frak{sl}(i)$ and $\frak{sl}(i+1)$. The direct limit $D:=\varinjlim_i \Sa^iV_i$ is a simple integrable $\frak{sl}(\infty)$-module which is multiplicity free as an $\frak h$-module. The module $D$ has no highest weight with respect to any splitting Borel subalgebra $\frak b$~\cite{DP}. The c.l.s. corresponding to $D$ equals $\mathcal L_1^{\infty}$, and in particular has infinite type. Lemma~\ref{Lclspar} implies that $\Ann_{\Ue(\frak{sl}(\infty))}D$ equals to the annihilator of a simple nonintegrable highest weight module. Indeed, let $\frak b^0$ be the ideal Borel subalgebra corresponding to the order (\apro{3}) in Section~\ref{SSslnot} and let $f$ be the function $$f(1)=\alpha\notin\mathbb Z, \hspace{10pt}f(n)=0,\hspace{10pt} n>1.$$Then $\Ann_{\Ue(\frak{sl}(\infty))}D=\Ann_{\Ue(\frak{sl}(\infty))} L_{\frak b^0}(f)$. This example illustrates the role of simple integrable non-highest weight modules in Theorem~\ref{2T}: the annihilators of such simple modules arise as annihilators of simple nonintegrable highest weight modules.\end{Ex}

\subsection{Proof of Proposition~\ref{Pp}} It remains to prove Proposition~\ref{Pp}.
\begin{proof}[Proof of Proposition~\ref{Pp}]
We say that an ideal $I$ of $\Ue(\frak{sl}(\infty))$ is {\it of locally finite codimension} if $I\cap\Ue(\frak g)$ has finite codimension in $\Ue(\frak g)$ for any finite-dimensional subalgebra $\frak g\subset\frak{sl}(\infty)$. It is easy to see that such ideals have the following remarkable properties:

(\apro{1}) the map $Q\mapsto I(Q)$ identifies the set of c.l.s. of finite type with the set of ideals of locally finite codimension;

(\apro{2}) if an $\frak{sl}(\infty)$-module $M$ is annihilated by an ideal $I\subset\Ue(\frak{sl}(\infty))$ of locally finite codimension, then $M$ is integrable.

Using the properties (\apro{1}) and (\apro{2}) one observes that if $\frak b$ is a Borel subalgebra, such that for any prime ideal $I$ of locally finite codimension there exists $f\in\mathbb C^{\mathbb Z_{>0}}$ with $I=\Ann_{\Ue(\frak{sl}(\infty))}L_\frak b(f)$, then $\frak b$ is ideal. Indeed, Proposition~\ref{Lexpl}a) gives an explicit expression of $\cls(L_\frak b(f))$ in terms of $f$. The requirement that this procedure allows for every c.l.s. of finite type to appear in the right-hand side of~(\ref{Eclsft}) forces the existence of a $\prec$-compatible decomposition $\mathbb Z_{>0}=F\sqcup S\sqcup F'$, where $F$ and $F'$ are arbitrary finite sets. Clearly, this is equivalent to the requirement that $\frak b$ is ideal.

\end{proof}
\section{On simple $\frak{sl}(\infty)$-modules determined up to isomorphism by their annihilators}\label{Snew}
It is a remarkable fact that if $\frak g$ is finite dimensional and semisimple, then a simple $\frak g$-module $M$ is determined up to isomorphism by its annihilator in $\Ue(\frak g)$ if and only if $M$ is finite dimensional. We now provide an analogue of this fact for $\frak{sl}(\infty)$.

Recall that a {\it simple tensor module} of $\frak{sl}(\infty)$ is a simple submodule of the tensor algebra \begin{center}T$^\cdot(V(\infty)\oplus V(\infty)_*)$\end{center}~\cite{DPS, PS}. It is easy to check that, for any fixed ideal Borel subalgebra $\frak b^0$, the simple tensor modules are precisely the highest weight modules $L_{\frak b^0}(f)$ such that $f$ can be chosen to be 0 almost everywhere (recall that the isomorphism class of a module $L_{\frak b^0}(f)$ recovers $f$ up to an additive constant).
\begin{proposition} Let $M$ be a simple $\frak{sl}(\infty)$-module which is determined up to isomorphism by its annihilator $I=\Ann_{\Ue(\frak{sl}(\infty))}M$. If $I$ is integrable, then $M$ is isomorphic to a simple tensor module.\end{proposition}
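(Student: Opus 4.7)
The plan is to apply the Duflo-type Theorem~\ref{2T} and exploit the multiple ways of realizing an integrable ideal as the annihilator of a $\frak b^0$-highest weight module to force $f$ to have finite support. Theorem~\ref{2T} gives $f\in\mathbb C^{\mathbb Z_{>0}}$ with $I=\Ann_{\Ue(\frak{sl}(\infty))}L_{\frak b^0}(f)$, so $M\cong L_{\frak b^0}(f)$ by hypothesis. Because $L_{\frak b^0}(g)\cong L_{\frak b^0}(g')$ precisely when $g-g'$ is constant, the hypothesis on $M$ is equivalent to the rigidity statement: whenever $\Ann L_{\frak b^0}(g)=I$, the difference $g-f$ must be a constant. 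Let $Q$ be the irreducible c.l.s. with $I(Q)=I$, and write it in the standard form $\mathcal L_l^\infty Q'\mathcal R_r^\infty$ of~(\ref{Ecfcls}) with $Q'$ of finite type. It suffices to show $l=r=0$ and that $Q'$ carries no $\mathcal E$ factor: then in the notation of Proposition~\ref{Lexpl}a) one has $p=q$, so only one value of $f$ has infinite multiplicity; shifting $f$ to make this value $0$ produces a function of finite support, exhibiting $M$ as a simple tensor module.

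The first reduction, $l=r=0$, follows from Lemma~\ref{Lclspar}. Suppose $l>0$. For any $\frak b^0_S$-dominant $g|_S$ realizing $Q'$ on $S=\mathbb Z_{>0}\setminus(F_l\sqcup F_r)$ and any choice of generic values on $F_l$ satisfying condition~(2) of that lemma, the resulting $g$ has $\Ann L_{\frak b^0}(g)=I$. Varying the generic values on $F_l$ produces $g,g'$ whose difference is supported on the finite set $F_l$ and nonzero, hence non-constant, contradicting rigidity; so $l=0$, and symmetrically $r=0$. For the second reduction, suppose the $\mathcal E^m$ factor of $Q=Q'$ has $m>0$, i.e.\ $p<q$ in Proposition~\ref{Lexpl}a). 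The formula~(\ref{Eclsft}) depends on $(p,q)$ and on the finite multiplicities $\{d_c\}_{c<p}\cup\{d_c\}_{c>q}$ but is insensitive to the intermediate multiplicities, provided $p$ and $q$ remain the extremal indices with $d_c=\infty$. Starting from a $\frak b^0$-dominant $f$ realizing $Q$ (constructed in Proposition~\ref{Lexpl}c)), I would split off an infinite $\prec$-initial segment of the block carrying the value $b+p$ and relabel it by $b+p+1$; the ideal structure of $\frak b^0$ supplies the required infinite $\prec$-interval, and placing the larger value $\prec$-earlier preserves the non-increasing condition defining dominance. The resulting $f'$ satisfies~(\ref{Eclsft}) with the same $(p,q)$ and the same finite multiplicities, so $\Ann L_{\frak b^0}(f')=I$; but $f-f'$ takes the value $-1$ on the modified infinite set and $0$ elsewhere, hence is non-constant, contradicting rigidity. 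Thus $m=0$.

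The main obstacle in this plan is verifying the $\frak b^0$-dominance of the modification $f'$ in the second reduction: one must ensure that the infinite $\prec$-initial segment really can be split off and placed just before the remaining $(b+p)$-block without violating the non-increasing property elsewhere in the order. This relies precisely on the ideal structure of $\frak b^0$, which guarantees infinite $\prec$-intervals on which a given infinite-multiplicity value of $f$ is attained. A general splitting Borel subalgebra need not supply such splittable intervals, which is consistent with Proposition~\ref{Pp}: the ideality of $\frak b^0$ is essential not only in Theorem~\ref{2T} but also in the present characterization.
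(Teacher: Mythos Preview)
Your first reduction ($l=r=0$) matches the paper's use of Lemma~\ref{Lclspar} and is correct. The second reduction is where you diverge from the paper: the paper does \emph{not} modify $f$; instead it observes that $I$, being integrable and of locally finite codimension, is invariant under the group $\tilde G=\{g\in\mathrm{Aut}_{\mathbb C}V(\infty)\mid g^*(V(\infty)_*)=V(\infty)_*\}$, so the hypothesis forces $M$ itself to be $\tilde G$-invariant, and then Theorems~3.4 and~4.2 of~\cite{DPS} identify such modules as simple tensor modules. Your route is more elementary in that it avoids the reference to~\cite{DPS}, but as written it has a genuine gap.

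The gap is exactly the point you flag as the ``main obstacle'', and your proposed resolution is wrong. You assert that the ideal structure of $\frak b^0$ guarantees an infinite $\prec$-initial segment of the $(b+p)$-block with infinite complement in that block. Take $\frak b^0$ to be the ideal Borel subalgebra of order~(\apro{3}) in Section~\ref{SSslnot}, so $S_2=\emptyset$. For the function $f$ built in Proposition~\ref{Lexpl}c), the $(b+p)$-block is a tail of $S_1$ and hence has order type~$\omega$: every proper $\prec$-initial segment is finite, so no splitting of the kind you describe exists. Concretely, for $Q=\mathcal E$ and this $\frak b^0$ one checks that the $\frak b^0$-dominant function with $\cls(L_{\frak b^0}(f))=\mathcal E$ is \emph{unique} up to an additive constant (two infinite $\prec$-intervals in order type $\omega+\omega^*$ can only be $S_1$ and $S_3$), so there is no $f'$ with the properties you want. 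The same obstruction rules out the analogous modification of the $(b+q)$-block, which has order type~$\omega^*$.

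The fix is easy once seen: Theorem~\ref{2T} lets you choose \emph{any} ideal $\frak b^0$, so choose one with $S_2$ infinite. Then in Proposition~\ref{Lexpl}c) the $(b+p)$-block is $(\text{tail of }S_1)\cup S_2$; taking the tail of $S_1$ as the infinite $\prec$-initial segment and relabelling it by $b+p+1$ leaves $S_2$ as the new $(b+p)$-block, preserves dominance, keeps $p$ and $q$ unchanged, and yields $f'$ with $f-f'$ non-constant as you intended. With this choice of $\frak b^0$ your argument goes through and gives a self-contained alternative to the paper's appeal to~\cite{DPS}.
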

\begin{proof} 
If $I$ is not of locally finite codimension, then Lemma~\ref{Lclspar} shows that our assumption on $M$ is contradictory as the function $f$  from Lemma~\ref{Lclspar} is not determined uniquely by $I$ up to an additive constant by $I$. In other words, if $I$ is not of locally finite codimension, then Lemma~\ref{Lclspar} implies that there exist $f_1, f_2\in \mathbb C^{\mathbb Z_{>0}}$ such that $$\Ann_{\Ue(\frak{sl}(\infty))} L_{\frak b^0}(f_1)=\Ann_{\Ue(\frak{sl}(\infty))} L_{\frak b^0}(f_2)=I$$but $L_{\frak b^0}(f_1)\not\cong L_{\frak b^0}(f_2)$.

Assume now that $I$ has locally finite codimension. Then $I=I(Q)$ for an irreducible c.l.s. of finite type $Q$, and by Proposition~\ref{Lexpl} c) $M$ is isomorphic to $L_{\frak b^0}(f^0)$ for some ideal Borel subalgebra $\frak b^0$ and $\frak b^0$-dominant function $f^0$. Moreover, as $I$ is clearly fixed under the group $\tilde G:= \{g\in{\rm Aut}_\mathbb CV(\infty)\mid g^*(V(\infty)_*)=V(\infty)_*\}$ considered as a group of automorphisms of $\Ue(\frak{sl}(\infty))$, it follows that $M$ is invariant under $\tilde G$. Now Theorems~3.4 and~4.2 in~\cite{DPS} imply that $L_\frak b(f)$ is a simple tensor module.

It remains to show that a simple tensor $\frak{sl}(\infty)$-module $M$ is determined up to isomorphism by its annihilator $\Ann_{\Ue(\frak{sl}(\infty))}M$. If $M'$ is a simple $\frak{sl}(\infty)$-module with $\Ann_{\Ue(\frak{sl}(\infty))}M'=\Ann_{\Ue(\frak{sl}(\infty))} M=I$, then the fact that $I$ has locally finite codimension implies that $M'$ is integrable and that the c.l.s. of $M'$ coincides with the c.l.s. of $I$, i.e., $\cls(M)=\cls(M')$. Furthermore, a careful analysis (carried out in detail in A.~Sava's master's thesis~\cite{Sa}) shows that $M'$ is a highest weight $\frak{sl}(\infty)$-module with respect to the ideal Borel subalgebra given by the order (\apro{3}) of Section~\ref{SSslnot}, and that the highest weight of $M$ equals the highest weight of $M'$. This of course implies that $M'\cong M$.\end{proof}

\end{document}